\newcommand*\patchAmsMathEnvironmentForLineno[1]{%
  \expandafter\let\csname old#1\expandafter\endcsname\csname #1\endcsname
  \expandafter\let\csname oldend#1\expandafter\endcsname\csname end#1\endcsname
  \renewenvironment{#1}%
     {\linenomath\csname old#1\endcsname}%
     {\csname oldend#1\endcsname\endlinenomath}}%
\newcommand*\patchBothAmsMathEnvironmentsForLineno[1]{%
  \patchAmsMathEnvironmentForLineno{#1}%
  \patchAmsMathEnvironmentForLineno{#1*}}%
\renewcommand{\subsectionmark}[1]{}
\newenvironment{enumerateroman}{
\begin{enumerate}[label=(\roman*),%
  leftmargin=2.5em,itemindent=0pt,%
  labelindent=.5em,labelwidth=1.5em,labelsep=!,%
  nosep]
}{
\end{enumerate}
}
\newenvironment{enumeratearabic*}{
\begin{enumerate*}[label=(\arabic*)] %
}{
\end{enumerate*}
}
\newenvironment{enumerateroman*}{
\begin{enumerate*}[label=(\roman*)] %
}{
\end{enumerate*}
}
\numberwithin{equation}{section}
\newtheorem{theoremcounter}{theoremcounter}[section]
\newtheorem{maintheoremcounter}{maintheoremcounter}
\theoremstyle{plain}
\newtheorem{corollary}[theoremcounter]{Corollary}
\newtheorem{lemma}[theoremcounter]{Lemma}
\newtheorem{proposition}[theoremcounter]{Proposition}
\newtheorem{theorem}[theoremcounter]{Theorem}
\theoremstyle{plain}
\newtheorem{maincorollary}[maintheoremcounter]{Corollary}
\newtheorem{maintheorem}[maintheoremcounter]{Theorem}
\theoremstyle{definition}
\newtheorem{definition}[theoremcounter]{Definition}
\newtheorem{example}[theoremcounter]{Example}
\theoremstyle{remark}
\newtheorem{remark}[theoremcounter]{Remark}
\theoremstyle{nonumberremark}
\newtheorem{mainremark}{Remark}
\newcommand{\tx}{\text}
\newcommand{\thdash}{\nbd th}
\newcommand{\nbd}{\nobreakdash-\hspace{0pt}}
\newcommand{\writelabel}[1]{#1\def\@currentlabel{#1}}
\newcommand{\minwidthmathbox}[2]{%
  \mathmakebox[{\ifdim#1<\width\width\else#1\fi}]{#2}%
}
\newcommand{\tbf}{\bfseries}
\newcommand{\bbV}{\ensuremath{\mathbb{V}}}
\newcommand{\bbone}{\ensuremath{\mathds{1}}}
\newcommand{\cO}{\ensuremath{\mathcal{O}}}
\newcommand{\rmH}{\ensuremath{\mathrm{H}}}
\newcommand{\rmM}{\ensuremath{\mathrm{M}}}
\newcommand{\td}{\tilde}
\newcommand{\wtd}{\widetilde}
\newcommand{\ov}{\overline}
\newcommand{\defcol}{\mathrel{:}}
\newcommand{\defeq}{\mathrel{:=}}
\newcommand{\condsep}{\mathrel{\;:\;}}
\newcommand{\condcol}{\condsep}
\newcommand{\quantsep}{\mathrel{\;.\;}}
\newcommand{\mrelspace}[1]{\mathrel{\mspace{#1}}}
\let\rightarroworig\rightarrow
\renewcommand{\rightarrow}
  {\protect\relbar\mrelspace{-9.7mu}\rightarroworig}
\renewcommand{\twoheadrightarrow}
  {\protect\rightarroworig\mrelspace{-15mu}\rightarroworig}
\let\leftarroworig\leftarrow
\renewcommand{\leftarrow}
  {\protect\leftarroworig\mrelspace{-9.7mu}\relbar}
\renewcommand{\longrightarrow}
  {\protect\relbar\mrelspace{-3.2mu}\relbar\mrelspace{-9.5mu}\rightarroworig}
\newcommand{\longhookrightarrow}
  {\protect\lhook\mrelspace{-3.1mu}\relbar\mrelspace{-3.2mu}\relbar\mrelspace{-11.7mu}\rightarroworig}
\newcommand{\ra}{\rightarrow}
\newcommand{\thra}{\twoheadrightarrow}
\newcommand{\lra}{\longrightarrow}
\newcommand{\lhra}{\longhookrightarrow}
\newcommand{\mto}{\mapsto}
\newcommand{\lmto}{\longmapsto}
\renewcommand{\Im}{\mathrm{Im}}
\newcommand{\isdiv}{\mathrel{\mid}}
\renewcommand{\pmod}[1]{\;(\mathrm{mod}\, #1)}
\renewcommand{\ker}{\operatorname{ker}}
\newenvironment{psmatrix}{\left(\begin{smallmatrix}}{\end{smallmatrix}\right)}
\newcommand{\Mat}[1]{\operatorname{Mat}_{#1}}
\newcommand{\linspan}{\operatorname{span}}
\newcommand{\rk}{\operatorname{rk}}
\newcommand{\ZZ}{\ensuremath{\mathbb{Z}}}
\newcommand{\QQ}{\ensuremath{\mathbb{Q}}}
\newcommand{\RR}{\ensuremath{\mathbb{R}}}
\newcommand{\CC}{\ensuremath{\mathbb{C}}}
\newcommand{\GL}[1]{\ensuremath{\mathrm{GL}_{#1}}}
\newcommand{\Mp}[1]{\ensuremath{\mathrm{Mp}_{#1}}}
\newcommand{\SL}[1]{\ensuremath{\mathrm{SL}_{#1}}}
\renewcommand{\det}{\ensuremath{\mathrm{det}}}
\newcommand{\HS}{\ensuremath{\mathbb{H}}}
\newcommand{\defeqr}{\mathrel{=:}}
\newcommand{\tdc}{\td{c}}
\newcommand{\Ga}{\Gamma}
\newcommand{\ga}{\gamma}
\newcommand{\la}{\lambda}
\newcommand{\om}{\omega}
\newcommand{\ka}{\kappa}
\newcommand{\tdGa}{\wtd{\Ga}}
\newcommand{\const}{\wtd{\mathrm{const}}}
\newcommand{\ord}{\mathrm{ord}}
\newcommand{\Heis}{\rmH}
\newcommand{\Ind}{\mathrm{Ind}}
\newcommand{\Res}{\mathrm{Res}}
\newcommand{\Hecke}[1]{\mathrm{Hecke}_{#1}}
\newcommand{\HeckeTor}[1]{\mathrm{HeckeTor}_{#1}}
\newcommand{\hecke}[1]{\mathrm{hecke}_{#1}}
\newcommand{\hecketor}[1]{\mathrm{hecketor}_{#1}}
\newcommand{\iotahecke}[1]{\iota_{\mathrm{Hecke},#1}}
\newcommand{\iotator}[1]{\iota_{\mathrm{Tor},#1}}
\newcommand{\lcm}{\mathrm{lcm}}
\newcommand{\OK}{\cO_K}
\newcommand{\OKell}{\cO_{K,\ell}}
\newcommand{\GMp}[1]{\mathrm{GMp}_{#1}}
\newcommand{\headertitle}{{%
  Explainable Ramanujan-type Congruences
}}
\newcommand{\headerauthors}{%
  M.~Raum%
}
\title{%
  Explainable Ramanujan-type Congruences on Square-Classes of Arithmetic Progressions
}
\author{%
  Martin Raum%
  \thanks{The author was partially supported by Vetenskapsr\aa det Grants~2019-03551 and~2023-04217.}%
}
\begin{document}

\thispagestyle{scrplain}
\begingroup
\deffootnote[1em]{1.5em}{1em}{\thefootnotemark}
\maketitle
\endgroup

\begin{abstract}
\small
\noindent
{\tbf Abstract:}
While examples of Ramanujan-type congruences are amply available via their relation to Hecke operators, it remains unclear which of them should be considered of combinatorial origin and which of them are mere artifacts of the connection with modular forms. Ranks and generalized ranks have been proposed as a tool to discern this question. We formalize this idea as ``explainable Ramanujan-type congruences'' with reference to Jacobi forms with singularities at torsion points, and then associated with them subspaces in a specific complex representation of the modular group. The resulting representation theoretic perspective allows us to prove that explainable Ramanujan-type congruences occur on square-classes~$M \ZZ + u^2 \beta$ of arithmetic progressions.
\\[.3\baselineskip]
\noindent
\textsf{\textbf{%
  Jacobi forms with singularities%
}}%
\noindent
\ {\tiny$\blacksquare$}\ %
\textsf{\textbf{%
  vanishing loci%
}}%
\noindent
\ {\tiny$\blacksquare$}\ %
\textsf{\textbf{%
  colored partition function%
}}
\\[.2\baselineskip]
\noindent
\textsf{\textbf{%
  MSC Primary:
  11F30%
}}%
\ {\tiny$\blacksquare$}\ %
\textsf{\textbf{%
  MSC Secondary:
  11F33, 11F50, 11P83, 05A17%
}}
\end{abstract}

\vspace{-2.5\baselineskip}
\renewcommand{\contentsname}{}
\setcounter{tocdepth}{\sectiontocdepth}
\tableofcontents
\setcounter{tocdepth}{\subsectiontocdepth}
\vspace{1.5\baselineskip}

\Needspace*{4em}
\phantomsection
\label{sec:introduction}
\addcontentsline{toc}{section}{Introduction}
\markright{Introduction}
\lettrine[lines=2,nindent=.2em]{\tbf T}{\,he} integer partition function~$p(n)$ records the number of ways a number~$n$ can be written as a sum over a decreasing sequence of non-negative integers. Ramanujan congruences for~$p(n)$ were originally observe by Ramanujan as divisibility patterns~\cite{ramanujan-1920}. For instance, $5$ divides~$p(5 n + 4)$ for all~$n \in \ZZ$, and similar divisibilities by~$7$ and~$11$ hold on the arithmetic progressions~$7 \ZZ + 5$ and~$11 \ZZ + 6$.

These congruences not only are among the historical foundations of the field of combinatorial aspects of partitions of integers (MSC 05A17), but they are distinguished among many of their generalizations. This distinction is related to two different perspective taken on them in modern mathematics. First, it is fruitful to view them literally as divisibility patterns and asks for how often similar patterns occur. This leads to the notions of Ramanujan-like, e.g.\@ $17 \isdiv p_3(17 n + 15)$ for the~$3$\nbd{}colored partition function~$p_3$~\cite{boylan-2004}, and Ramanujan-type congruences, e.g.\@ $13 \isdiv p(59^4\, 13 n + 111247)$~\cite{atkin-obrien-1967}. Since the early~2000s when Ono connected Ramanujan-type congruences to the Hecke theory of modular forms~\cite{ono-2000,ahlgren-ono-2001,treneer-2006}, there is ample supply of them.

Second, one may require a ``combinatorial origin'' of Ramanujan-type congruence as evidence for a deeper combinatorial symmetry on, say, the set of integer partitions. The ranks of partitions provide a combinatorial explanation for the Ramanujan congruences modulo~$5$ and~$7$\cite{dyson-1944,atkin-swinnerton-dyer-1954}. Similarly, the congruences modulo~$11$ are explained by a generalized rank due to Dyson\footnote{Dyson called his invariants ``crank'', a then pun which seems historically outdated in light of the modern meaning of the word.}\cite{dyson-1944,andrews-garvan-1988}. At the foundation we have the question: Which Ramanujan-type congruences are genuinely combinatorial and which are artifacts of the theory of modular forms? Currently, we have combinatorial explanations for only very few of the Ramanujan-type congruences constructed through Ono's machinery.

The present work was triggered by Stanton, who in one of his talks advertised a formalization of generalized ranks. Given sets~$A(n)$, $n \in \ZZ$, of combinatorial interest---say the sets of specific restricted partitions of~$n$ or of Durfee symbols~\cite{andrews-2007}---that satisfy a Ramanujan-type congrunce modulo a prime~$\ell$ on the arithmetic progression~$M \ZZ + \beta$, i.e.\@ $\ell \isdiv \# A(M n + \beta)$ for all~$n \in \ZZ$, is there a generalized rank function
\begin{multline*}
  \rk(a) \defcol
  \bigcup_{n \in \ZZ_{\ge 0}} A(n)
\lra
  \ZZ
\quad\tx{such that }
\\
  \tx{for fixed~$n \in M \ZZ + \beta$}\quad
  \big\{
  a \in A(n)
  \condcol
  \rk(a) \equiv r \,\pmod{\ell}
  \big\}
  \tx{\ has equal size for all~$r \in \ZZ$?}
\end{multline*}
In Definition~\ref{def:explainable_ramanujan_type_congruence} we introduce the notion of \emph{explainable Ramanujan-type congruences} of weakly holomorphic modular forms, which combines Stanton's criterion with an additional requirement on modularity.

To justify the modularity requirement that we impose, we recall that many known instances of generalized ranks, although not all of them~\cite{bringmann-lovejoy-osburn-2010,andrews-chan-kim-osburn-2016} (see also Remark~\ref{rm:generalization_to_further_singularities}), can be assembled as a generating series
\begin{gather*}
  \sum_{\substack{n \in \ZZ\\a \in A(n)}}
  q^n \zeta^{\rk(a)}
\end{gather*}
for a~priori formal variables~$q$ and~$\zeta$ that affords an interpretation in terms of the (shifted) Fourier expansion of a Jacobi form. The rank and Dyson's generalized rank of integer partitions are examples, and there are further, more recent examples by Rolen-Tripp-Wagner for colored partitions~\cite{rolen-tripp-wagner-2020-preprint}.

The goal of the present paper is to initiate a theory of explainable Ramanujan-type congruences that leverages the modularity assumption to restrict their existence. We extend one of the foundational structure theorems for Ramanujan-type congruences of weakly holomorphic modular forms~\cite{radu-2012,radu-2013,raum-2022,raum-2023}.

To state our main theorem and its corollary, we let~$\chi_\theta$ and~$\chi_\eta$ be the characters of~$\tdGa_0(4) \subseteq \Mp{1}(\ZZ)$ and~$\Mp{1}(\ZZ)$ associated with the Jacobi theta function and the Dedekind eta function. For clarity, we mentioned that modular forms for~$\Mp{1}(\ZZ)$ and its subgroups historically were prior to the work of Kubota~\cite{kubota-1969}, and often still are, formulated via corresponding multiplier systems. We remind the reader of the notion of weakly holomorphic Jacobi forms with possible singularities at~$\ZZ + \tau \ZZ$ (see Section~\ref{sec:jacobi_forms}). Further, for a rational number~$x$ we let~$\ord_p(x)$ be the maximal integer~$n$ such that~$p^n \isdiv x$.

\begin{maintheorem}
\label{mainthm:square_classes_erc}
Let~$f$ be a weakly holomorphic modular form and~$\phi$ a weakly holomorphic Jacobi form with possible singularities at~$\ZZ + \tau \ZZ$ for~$\tdGa_0(N) \subseteq \Mp{1}(\ZZ)$ and character~$\chi \chi_\theta$ or~$\chi \chi_\eta$, where~$\chi$ is a Dirichlet character modulo~$N$. If~$f$ has a Ramanujan-type congrunce modulo a prime~$\ell$ on~$M \ZZ + \beta$ explained by~$\phi$ in the sense of Definition~\ref{def:explainable_ramanujan_type_congruence}, then~$f$ has explainable Ramanujan-type congruences modulo~$\ell$ on~$M \ZZ + u^2 \beta$ for all~$u \in \ZZ$ with~$\gcd(M, u) = 1$.
\end{maintheorem}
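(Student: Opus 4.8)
The plan is to translate the combinatorial equidistribution condition of Definition~\ref{def:explainable_ramanujan_type_congruence} into the vanishing modulo~$\ell$ of torsion-point specializations of~$\phi$, and then to transport that vanishing across square-classes by means of the diagonal torus of the metaplectic group acting on the Weil representation that encodes the congruence. First I would unwind the definition at the level of Fourier expansions. Writing $\phi(\tau,z) = \sum_{n,r} c(n,r)\, q^n \zeta^r$ with $\zeta = e^{2\pi i z}$, the requirement that the fibres $\{a \in A(n) \condcol \rk(a) \equiv r \pmod{\ell}\}$ all have equal cardinality for~$n \in M\ZZ + \beta$ is, by finite Fourier inversion over~$\ZZ/\ell\ZZ$, equivalent to
\begin{gather*}
  \ell \isdiv \sum_{r} c(n,r)\, \zeta_\ell^{a r}
  \quad\tx{for all } a \in \{1,\dots,\ell-1\} \tx{ and all } n \in M\ZZ + \beta,
\end{gather*}
where $\zeta_\ell = e^{2\pi i/\ell}$. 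The inner sum is the $n$-th Fourier coefficient of the specialization $\phi_a(\tau) \defcol \phi(\tau, a/\ell)$. Since $a/\ell \notin \ZZ + \tau\ZZ$, this specialization avoids the prescribed singular locus, so by the elliptic transformation law each $\phi_a$ is a weakly holomorphic modular form of half-integral weight on a congruence subgroup contained in $\tdGa_0(N)$, with character $\chi\chi_\theta$ or $\chi\chi_\eta$ twisted by the finite-order character recording the $z$-translation by~$a/\ell$. Thus explainability on $M\ZZ + \beta$ is precisely the statement that every $\phi_a$, together with the datum $f$ itself (the $z=0$ specialization), has all its coefficients on $M\ZZ + \beta$ divisible by~$\ell$.

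Next I would feed this vanishing into the representation-theoretic machine underlying the structure theorems of~\cite{raum-2022,raum-2023}. The reductions modulo~$\ell$ of~$f$ and of the $\phi_a$ generate, under the component operators $g \mapsto g|\ga$ for~$\ga$ in a finite quotient of~$\Mp{1}(\ZZ)$ (equivalently, under the relevant Weil representation~$\rho$ of the modular group), a finite-dimensional space; a Ramanujan-type congruence on $M\ZZ + \beta$ is equivalent to the vanishing of a distinguished $\beta$-indexed coordinate functional. The key structural input is the action of the diagonal torus element~$\ga_u \in \Mp{1}(\ZZ)$ covering $\diag(u, u^{-1}) \in \SL{2}(\ZZ/M\ZZ)$: on the Weil representation it scales the discriminant-form index by~$u^2$, so that it carries the coordinate indexed by~$\beta$ to the one indexed by~$u^2\beta$, up to a root of unity and a Gauss-sum factor. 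The hypothesis $\gcd(M,u) = 1$ is exactly what makes $\ga_u$ invertible modulo~$M$ and $u^2\beta$ well defined, while replacing $u$ by~$u^{-1}$ absorbs the sign of the exponent so that the whole square-class is reached.

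Finally I would lift $\ga_u$ from the specializations back to the Jacobi form, setting $\phi^{(u)} \defcol \phi \,|\, \ga_u$ for the Jacobi slash action, and verify three compatibilities: that $\phi^{(u)}$ is again a weakly holomorphic Jacobi form with singularities confined to $\ZZ + \tau\ZZ$; that its character is again of the form $\chi'\chi_\theta$ or $\chi'\chi_\eta$ for a Dirichlet character~$\chi'$ modulo~$N$, using the explicit transformation of $\chi_\theta$ and $\chi_\eta$ under~$\ga_u$; and that the $z=0$ specialization of $\phi^{(u)}$ still governs the coefficients of~$f$ on the shifted progression. Granting these, the index-scaling of the previous step sends the divisibility of the $\beta$-coordinate to divisibility of the $u^2\beta$-coordinate, so that $\phi^{(u)}$ explains a Ramanujan-type congruence of~$f$ modulo~$\ell$ on $M\ZZ + u^2\beta$, which is the assertion.

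The main obstacle is the metaplectic bookkeeping in this last step: one must check that the torus operator preserves the class of Jacobi forms with singularities at the torsion lattice and converts the half-integral multiplier $\chi_\theta$ (resp.\@ $\chi_\eta$) into another admissible multiplier of the same type, rather than destroying the $\theta$- or $\eta$-normalization that carries the quadratic structure responsible for the square-class. A secondary difficulty is ensuring that the transported vanishing is genuinely \emph{explained}---witnessed by the honest Jacobi form $\phi^{(u)}$ in the sense of Definition~\ref{def:explainable_ramanujan_type_congruence}, not merely an abstract coordinate vanishing in the representation---which is precisely why one must carry $\ga_u$ on~$\phi$ itself throughout, and not only on its torsion-point specializations.
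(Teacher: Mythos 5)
Your overall geometry---specialize $\phi$ at $\ell$-torsion points in $z$ and use a group element that is diagonal modulo $M$ to carry the progression $M\ZZ+\beta$ to $M\ZZ+u^2\beta$---is the right one, but your translation of Definition~\ref{def:explainable_ramanujan_type_congruence} is incorrect in a way that breaks the argument. The definition requires that the $\ell$\thdash{} cyclotomic polynomial divide $\tdc(\phi;n;z)$ \emph{in the ring} $\CC[e(z\slash N),e(-z)]$; by Lemma~\ref{la:cyclotomic_divides_equivalent_to_vanishing} and Corollary~\ref{cor:erc_characterization} this is equivalent to the torsion specializations $c(\phi;n;z)$, $z\in\frac{1}{\ell}\ZZ\setminus\ZZ$, vanishing \emph{identically} over $\CC$, not merely modulo $\ell$. (Stanton's equal-fibre condition also gives exact vanishing: if all residue classes of the rank have equal cardinality, then $\sum_r c(n,r)\zeta_\ell^{ar}=0$, not just $\ell\isdiv\sum_r c(n,r)\zeta_\ell^{ar}$.) Propagating your weaker mod-$\ell$ condition has two fatal consequences. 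First, what you transport to $M\ZZ+u^2\beta$ at the end is again only a mod-$\ell$ divisibility of specialized coefficients, which does not yield cyclotomic divisibility of $\tdc(\phi;n;z)$ on the new progression; so you have not produced an \emph{explained} congruence in the sense of the Definition, which is exactly what the theorem asserts. Second, working with ``reductions modulo $\ell$'' of $f$ and the $\phi_a$ places you in characteristic-$\ell$ representation theory, where stability of the relevant integral structure under the group action is not free: it is the content of Theorem~\ref{thm:mf_representation} (Deligne--Rapoport, Katz, Jochnowitz) and forces a restriction of the group. The paper's proof exploits the exact vanishing precisely to avoid this: the kernel of the homomorphism $\psi$ of Proposition~\ref{prop:hecke_torsion_representation} is automatically a \emph{complex} subrepresentation, so no integrality input is needed (which is why Theorem~\ref{thm:square_classes_erc} requires no congruence-subgroup hypothesis on $\ker\chi$).

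Two further points. Your appeal to the Weil representation and ``discriminant-form index scaling'' is not available in this setting: $\phi$ has singularities at $\ZZ+\tau\ZZ$, so it admits no theta decomposition (this is why the paper develops the leading Fourier coefficients $\tdc$ in Proposition~\ref{prop:fourier_coefficients_specialization}), and the index $m$ may be an arbitrary complex number. The mechanism that actually moves $\beta$ to $a^2\beta$ is the elementary computation of Proposition~\ref{prop:action_on_hecke_vector_span}: conjugating the exponential sums $\sum_h e(-h\beta\slash M)\,T^h$ by an element $\ga=\begin{psmatrix} a & b \\ c & d \end{psmatrix}$ of $\Ga^\Delta_M(\chi)$ produces $\wtd\chi(\ga)$ times the corresponding sum for $a^2\beta$---a computation your sketch never performs, and which cannot be replaced by a citation to Weil-representation folklore here. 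Finally, your last step of forming $\phi^{(u)}=\phi|\ga_u$ and re-verifying its character is unnecessary and essentially vacuous: the relevant $\ga_u$ lies in $\Ga^{\Delta}_M(\chi)\cap\tdGa_0(\ell)\subseteq\Ga$ (for the characters $\chi\chi_\theta$, $\chi\chi_\eta$ this follows as in Corollary~\ref{cor:square_classes_rc_eta_theta}), so $\phi|_{k,m}\,\ga_u$ is a scalar multiple of $\phi$. In the paper's proof the \emph{same} $\phi$ witnesses every congruence in the square class: by Proposition~\ref{prop:characterization_explainable_ramanujan_type_congruence}, explainability on $M\ZZ+a^2\beta$ is the membership $\hecketor{M,\ell}(\chi;a^2\beta,\mu)\in\ker(\psi)$, and this follows from the subrepresentation property of $\ker(\psi)$ together with the decomposition of $\HeckeTor{M,\ell}(\chi)$ into characters provided by Lemma~\ref{la:torsion_point_representation_ga0ell}.
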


\begin{mainremark}
The result that we proof in Theorem~\ref{thm:square_classes_erc} applies to a much broader class of explainable Ramanujan-type congrunces, but is more technical to state.
\end{mainremark}

The next statement is an important application of Theorem~~\ref{mainthm:square_classes_erc}. Its analogue in the case of Ramanujan-type congruences enabled many of the arguments in, for instance, the second half of our work Ramanujan-type congruences in integral weight~\cite{raum-2022}, and in the case of Hurwitz class numbers~\cite{beckwith-raum-richter-2022} is crucial for later work on~$\ell$\nbd{}torsion in class groups~\cite{beckwith-raum-richter-2023-preprint}. We intend to leverage Corollary~\ref{maincor:max_erc} in future work, whose context we mention at the end of this introduction.

\begin{maincorollary}
\label{maincor:max_erc}
Let~$f$ be a weakly holomorphic modular form. Given a prime~$\ell$, consider the set of arithmetic progressions that support explainable Ramanujan-type congruences modulo~$\ell$:
\begin{multline*}
  \big\{
  M \ZZ + \beta
  \condcol
  f \tx{\ has a Ramanujan-type congrunce modulo~$\ell$ on~$M \ZZ + \beta$}
\\
  \tx{explained by some~$\phi$ as in Theorem~\ref{mainthm:square_classes_erc}}
  \big\}
\tx{.} 
\end{multline*}
Its maximal elements~$M \ZZ + \beta$ with respect to inclusion satisfy
\begin{alignat*}{2}
  \ord_p(M) &\le \max\{0, \ord_p(\beta) + 1 \}
\tx{,}\quad
&&
  \tx{if\ } p \ne 2 \tx{\ prime}
\tx{;}
\\
  \ord_2(M) &\le \max\{0, \ord_2(\beta) + 3 \}
\tx{,}\quad
&&
  \tx{if\ } p = 2 \tx{\ prime}
\tx{.}
\end{alignat*}
\end{maincorollary}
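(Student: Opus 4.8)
The plan is to prove the contrapositive at each prime separately: fixing a prime~$p$, I assume that a supporting progression~$M\ZZ+\beta$ violates the asserted bound at~$p$, and I exhibit a strictly coarser supporting progression, contradicting maximality. Since coarsening at~$p$ leaves the $q$\nbd{}parts for $q\neq p$ untouched, treating one prime at a time suffices. Write $a=\ord_p(M)$ and $b=\ord_p(\beta)$. If $b\ge a$ then $\beta\equiv0\pmod{p^a}$ and the bound $a\le\max\{0,b+1\}$ holds automatically, so I may assume $b<a$ and write $\beta=p^b\beta_0$ with $p\nmid\beta_0$ (the case $\beta=0$ being vacuous). The bound to be reached is $a\le b+1$ for odd~$p$ and $a\le b+3$ for $p=2$, which is exactly the $p$\nbd{}adic precision needed to pin down a square class.

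The combinatorial heart is the description of the square class of~$\beta$ modulo~$p^a$. For odd~$p$ a $p$\nbd{}adic unit is a square precisely when it is a quadratic residue modulo~$p$, while for $p=2$ a unit is a square precisely when it is congruent to~$1$ modulo~$8$; equivalently, the squares in~$\ZZ_p^\times$ form the subgroup of conductor~$p$, respectively~$8$. Consequently, two residues of equal $p$\nbd{}valuation~$b$ lie in the same square class modulo~$p^a$ as soon as their unit parts agree modulo~$p$, respectively modulo~$8$, that is, as soon as the residues agree modulo~$p^{b+1}$, respectively~$2^{b+3}$. Now suppose $a>b+1$ for odd~$p$, respectively $a>b+3$ for $p=2$, and set $M'=M/p$. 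Each of the $p$ residues $\beta+jM'$, $j=0,\dots,p-1$, still has $p$\nbd{}valuation~$b$ and differs from~$\beta$ only beyond~$p^{b+1}$, respectively~$2^{b+3}$, because $\ord_p(M')=a-1\ge b+1$, respectively $\ge b+3$; hence each $\beta+jM'$ is a square-class translate $u^2\beta$ of~$\beta$ with $\gcd(u,M)=1$, the unit~$u$ being assembled by the Chinese remainder theorem from a $p$\nbd{}adic square root and the value~$1$ at the remaining primes. By Theorem~\ref{mainthm:square_classes_erc} every progression~$M\ZZ+\beta+jM'$ therefore supports an explainable Ramanujan-type congruence modulo~$\ell$.

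It remains to combine these $p$ sub-progressions into their disjoint union $M'\ZZ+\beta=\bigsqcup_{j=0}^{p-1}(M\ZZ+\beta+jM')$. Here I would invoke that explainability in the sense of Definition~\ref{def:explainable_ramanujan_type_congruence} is a condition imposed pointwise in~$n$ on a single global rank function, together with a modularity requirement on one explaining Jacobi form, neither of which refers to the modulus of the progression; hence the balancing of the rank modulo~$\ell$ on each piece~$M\ZZ+\beta+jM'$ yields the same balancing on the union~$M'\ZZ+\beta$. Thus $M'\ZZ+\beta$ supports an explainable Ramanujan-type congruence and strictly contains~$M\ZZ+\beta$, contradicting maximality. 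This forces $a\le b+1$ for odd~$p$ and $a\le b+3$ for $p=2$, which together with the trivial case $b\ge a$ gives the stated bounds. The step I expect to be the main obstacle is precisely this last assembly: one must ensure that a single explaining Jacobi form, equivalently a single rank function, can be taken to serve all the square-class translates simultaneously, so that the union genuinely inherits explainability rather than merely the underlying divisibility. Verifying this amounts to tracking which explaining form the proof of Theorem~\ref{mainthm:square_classes_erc} produces for~$M\ZZ+u^2\beta$ and checking its compatibility across the residues~$j$.
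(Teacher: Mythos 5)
Your argument is correct and is essentially the paper's own proof: contraposition, the verification that when~$\ord_p(M)$ exceeds the stated bound every residue~$\beta + jM/p$ lies in the square class of~$\beta$ modulo~$M$, an appeal to Theorem~\ref{mainthm:square_classes_erc}, and the union over the~$p$ sub-progressions. (The paper constructs the unit~$u$ by solving an explicit congruence $u^2 \equiv 1 + h'M/(p\beta_p)$, solvable modulo all powers of~$p$ because the right-hand side is~$\equiv 1 \pmod{p}$, resp.~$\pmod{8}$; your appeal to the structure of squares in~$\ZZ_p^\times$ is the same computation.) The step you flag as the likely obstacle is in fact not one: the technical Theorem~\ref{thm:square_classes_erc}, from which Theorem~\ref{mainthm:square_classes_erc} is deduced, asserts that the \emph{same} Jacobi form~$\phi$ explains the congruences on all square-class translates~$M\ZZ + u^2\beta$, and since Definition~\ref{def:explainable_ramanujan_type_congruence} is a condition imposed separately at each~$n$ (cyclotomic divisibility of~$\tdc(\phi;n;z)$), a single~$\phi$ explaining every piece~$M\ZZ + \beta + jM/p$ automatically explains the congruence on the union~$(M/p)\ZZ + \beta$, contradicting maximality exactly as you intend; the paper leaves this union step implicit. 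One minor point in your favour: your square-class argument treats arbitrary rational~$\beta$ uniformly, whereas the paper's written proof restricts attention to~$p$\nbd{}integral~$\beta$.
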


The proofs of Theorem~\ref{mainthm:square_classes_erc} and its Corollary~\ref{maincor:max_erc} appear in Section~\ref{ssec:representation_theory:square_classes}. Both statements are consequences Theorem~\ref{thm:square_classes_erc}.

Our approach to explainable Ramanujan-type congruences is inspired by previous work on relations among Ramanujan-type congruences, which centers around the representation theory of~$\SL{2}(\ZZ)$ and its metaplectic cover~$\Mp{1}(\ZZ)$. Our emphasis in the present work lies on a unified point of view on Ramanujan-type congruences and explainable Ra\-ma\-nu\-jan-type congruences. While in previous work, we characterized Ramanujan-type congruences employing modular representation theory, i.e.\@ representation theory over rings of finite characteristic, explainable Ramanujan-type congruences lead to complex representations. Comparing the relevant representations that appear in Lemma~\ref{la:hecke_representation} and Proposition~\ref{prop:hecke_torsion_representation}, it becomes evident why similar methods should apply to both situations. We substantiate this impression in the present paper.

It is natural to ask to what extend the semi-simple representation theory in characteristic~$0$ that governs explainable Ramanujan-type congruences can provide insight beyond what we know about Ramanujan-type congruences. Preliminary considerations for~$\SL{2}(\ZZ)$ based on Corollary~\ref{maincor:max_erc} suggest that explainable Ramanujan-type are significantly more restricted than mere Ramanujan-type congruences. We plan to address this and the more delicate case of~$\Mp{1}(\ZZ)$ in a sequel.

\section{Ramanujan-type congruences}%
\label{sec:ramanujan_type}

This section is based on previous work on Ra\-ma\-nu\-jan-type congruences~\cite{raum-2022,raum-2023}, but provides a new perspective that we extend in the subsequent sections to explainable Ra\-ma\-nu\-jan-type congruences. The proof of Theorem~\ref{thm:square_classes_rc} and its Corollary~\ref{cor:square_classes_rc_eta_theta}, whose statement already appears in those papers, differs from the previously given argument. While in previous work we performed explicit calculations of character values, we develop a more flexible method, which puts more emphasis on the interpretation of characters as one-dimensional representations on which we have a conjugation action by a suitable group defined in~\eqref{eq:def:gamma_diag_char}.

Throughout, $e(z) = \exp(2 \pi i\, z)$ denotes the renormalized exponential. We write~$\HS = \{ \tau \in \CC \condcol \Im(\tau) > 0 \}$ for the Poincar\'e upper half plane, which carries an action of~$\SL{2}(\RR)$ by M\"obius transformations. We realize the metaplectic cover~$\Mp{1}(\RR)$ of~$\SL{2}(\RR)$ as a set of pairs~$(\ga,\om)$, where~$\ga = \begin{psmatrix} a & b \\ c & d \end{psmatrix} \in \SL{2}(\RR)$ and~$\om$ is a holomorphic square-root of~$c \tau + d$ on~$\HS$. Its multiplication is then given by~$(\ga_1, \om_1) (\ga_2, \om_2) = (\ga_1 \ga_2,\, \om_1 \circ \ga_2 \cdot \om_2)$.

We write~$\Mp{1}(\ZZ) \subset \Mp{1}(\RR)$ for the preimage of~$\SL{2}(\ZZ)$ under the projection to~$\SL{2}(\RR)$, and~$\tdGa_0(N) \thra \Ga_0(N)$, $\tdGa^0(N) \thra \Ga^0(N)$, $\tdGa(N) \thra \Ga(N)$ for the corresponding level\nbd{}$N$ congruences subgroups. We write~$T$ and~$S$ for the generators~$\begin{psmatrix} 1 & 1 \\ 0 & 1 \end{psmatrix}$ and~$\begin{psmatrix} 0 & -1 \\ 1 & 0 \end{psmatrix}$ of~$\SL{2}(\ZZ)$, and identify them with their lifts $T \defeq \big( \begin{psmatrix} 1 & 1 \\ 0 & 1 \end{psmatrix},\, 1 \big)$ and $S \defeq \big( \begin{psmatrix} 0 & -1 \\ 1 & 0 \end{psmatrix},\, \sqrt{\tau} \big)$ to~$\Mp{1}(\ZZ)$, where the square root is the principle branch.

Via the natural projection~$\Mp{1}(\RR) \thra \SL{2}(\RR)$ to the special linear group the metaplectic group acts on~$\HS$. The action on~$\HS$ lifts to a family of actions on functions on~$\HS$ parametrized by weights~$k \in \frac{1}{2} \ZZ$:
\begin{gather*}
  \big( f \big|_k\, (\ga,\om) \big)(\tau)
\defeq
  \om(\tau)^{2k}\,
  f\big(\mfrac{a \tau + b}{c \tau + d}\big)
\tx{,}\quad
  \ga = \begin{psmatrix} a & b \\ c & d \end{psmatrix}
\tx{.}
\end{gather*}

A weakly holomorphic modular form for a finite index subgroup~$\Ga \subseteq \Mp{1}(\ZZ)$, character~$\chi$ of~$\Ga$, and weight~$k \in \frac{1}{2} \ZZ$ is a holomorphic function~$f \defcol \HS \ra \CC$ that satisfies
\begin{enumerateroman}
\item $f |_k\, (\ga,\om) = \chi(\ga,\om)\, f$ for all~$(\ga,\om) \in \Ga$,
\item there is~$a \in \RR$ with $|(f |_k (\ga,\om) )(\tau)| \ll \exp(a \Im(\tau))$ as~$\Im(\tau) \ra \infty$ for all~$(\ga,\om) \in \Mp{1}(\ZZ)$.
\end{enumerateroman}
The space of such functions is written as~$\rmM^!_k(\Ga,\chi)$. A weakly holomorphic modular form~$f$ admits a Fourier expansion
\begin{gather*}
  \sum_{n \in \QQ}
  c(f; n) e(n \tau)
\tx{,}
\end{gather*}
where~$c(f; n) = 0$ if~$n$ sufficiently small. If~$\chi$ has level~$N$, then the Fourier coefficients of~$f$ is supported on~$\frac{1}{N} \ZZ$.

We can view the~$\CC$\nbd{}vector space~$\rmM^!_k(\Ga,\chi)$ as a right-representation of~$\Ga$, which acts via the slash action of weight~$k$. Further, since~$\chi$ is a character of~$\Ga$, it yields a one-dimensional representation, which we identify with it. By modular invariance imposed in the definition of weakly holomorphic modular forms, $\rmM^!_k(\Ga,\chi)$ is~$\chi$\nbd{}isotypical, that is, as a representation of~$\Ga$ it is isomorphic to a direct sum of copies of~$\chi$. To contextualize this, we note that if~$\Ga \subset \Mp{1}(\ZZ)$ is normal, then~$\rmM^!_k(\Ga,\chi)$ also carries an action of~$\Mp{1}(\ZZ)$ and any quotient-representation of the induction of~$\chi$ to~$\Mp{1}(\ZZ)$ can a priori occur.

Given a right-representation~$V$ of a group~$H \subseteq G$ over a ring~$R$, we view it as a module over the group algebra~$R[H]$ and define the induction via the~$R[G]$\nbd{}module
\begin{gather*}
  \Ind_H^G\,V
\defeq
  V \otimes_{R[H]} R[G]
\tx{.}
\end{gather*}
Paralleling this notation, the restriction of a representation~$V$ of~$G$ to~$H$ is denoted~$\Res_H^G\,V$.

In order to  make~$\rmM^!_k(\Ga,\chi)$ amenable to arguments from modular representation theory, in~\cite{raum-2022} we introduced the notion of abstract spaces of weakly holomorphic modular forms. They are defined as pairs~$(V,\psi)$ for a morphism~$\psi$ (denoted~$\phi$ in previous work, which we will reserve for Jacobi forms in this paper) from a representation~$V$ of a finite index subgroup~$\Ga \subseteq \Mp{1}(\ZZ)$ to~$\rmM^!_k(\Ga')$ for some potentially different group~$\Ga' \subseteq \Ga$, the kernel of~$V$. We will skip this part of the theory in favor of a more explicit handling of the homomorphism~$\psi$ and its range. This, we hope, provides not only an alternative perspective but also separates more strictly concerns from representation theory and modular forms.

The starting point of~\cite{raum-2022} was the observation that the so-called Fourier expansion principle can be understood in representation theoretic context. Such ideas in different disguise have long been know and used in the geometric perspective on modular forms pioneered by Katz~\cite{katz-1973} and refined by Deligne-Rapoport~\cite{deligne-rapoport-1973}. To accommodate the case of half-integral weights, we required unpublished work by Jochnowitz~\cite{jochnowitz-2004-preprint} as well.

Throughout this section, we fix a number field~$K \subset \CC$, and let~$\OKell$ be the localization of the ring of integers~$\OK \subset K$ at a prime ideal~$\ell$. Given a $\CC$\nbd{}vector space~$V$ of weakly holomorphic modular forms, we will commonly write
\begin{gather}
\label{eq:def:integral_fourier_coefficients}
  \bbV
\defeq
  \big\{
  f \in V
  \condcol
  \forall n \in \QQ
  \quantsep
  c(f;n) \in \OKell
  \big\}
\end{gather}
for the $\OKell$\nbd{}submodule of all its elements with Fourier coefficients in~$\OKell$.

The next statement corresponds to Theorem~2.2 of~\cite{raum-2022}, but does not pinpoint the level~$N$.

\begin{theorem}
\label{thm:mf_representation}
Let~$V$ be a $\CC$\nbd{}vector space of weakly holomorphic modular forms of given weight~$k \in \frac{1}{2}\ZZ$ for a congruence subgroup. Assume that~$V$ is preserved by the action of\/~$\Ga \subseteq \Mp{1}(\ZZ)$. Then~$\bbV$ defined as in~\eqref{eq:def:integral_fourier_coefficients} is preserved by the action of\/~$\Ga \cap \tdGa_0(N)$ for a suitable positive integer~$N$.
\end{theorem}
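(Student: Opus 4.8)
The plan is to prove the statement through the Fourier expansion principle, viewing $\bbV$ as the lattice of forms that are $\OKell$\nbd{}integral at the cusp~$\infty$ and tracking how the slash action redistributes this integrality among the cusps. First I would record a reduction: since $\tdGa_0(N)$ is a group and the weight\nbd{}$k$ slash action is a right action, it suffices to produce a single~$N$ for which every~$(\ga,\om) \in \Ga \cap \tdGa_0(N)$ satisfies $\bbV |_k (\ga,\om) \subseteq \bbV$. Applying this to~$(\ga,\om)$ and to its inverse then upgrades the inclusion to an equality via $\bbV = (\bbV |_k (\ga,\om)^{-1}) |_k (\ga,\om) \subseteq \bbV |_k (\ga,\om) \subseteq \bbV$, so that $\bbV$ is genuinely preserved. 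The task therefore reduces to showing that for such~$(\ga,\om)$ and any~$f \in \bbV$ the Fourier coefficients $c(f |_k (\ga,\om); n)$ again lie in~$\OKell$ for all~$n \in \QQ$.

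Next I would analyze the effect of one~$(\ga,\om)$, with $\ga = \begin{psmatrix} a & b \\ c & d\end{psmatrix}$, on the expansion at~$\infty$. Since $\ga\tau \to a/c$ as~$\Im(\tau) \to \infty$, the $\infty$\nbd{}expansion of $f |_k (\ga,\om)$ is governed by the expansion of~$f$ at the cusp $\ga\infty = a/c$, twisted by the automorphy factor $\om(\tau)^{2k}$. Imposing $(\ga,\om) \in \tdGa_0(N)$ forces $N \isdiv c$, so that $a/c$ is $\Ga_0(N)$\nbd{}equivalent to~$\infty$ and hence lies in the same connected component of the relevant integral model of the modular curve (Deligne--Rapoport, Katz; Jochnowitz in half\nbd{}integral weight). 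The $q$\nbd{}expansion principle over~$\OKell$ then propagates $\OKell$\nbd{}integrality from~$\infty$ to~$a/c$ up to a root of unity~$\zeta$ whose order divides the cusp width and up to the scalar contributed by $\om(\tau)^{2k}$. Choosing~$N$ divisible by a sufficiently high power of the finitely many primes that can occur---those dividing the level of the congruence subgroup for which the forms in~$V$ are modular, those dividing the cusp widths, and the prime~$2$ accounting for the half\nbd{}integral weight---clears every denominator, while the finitely many roots of unity~$\zeta$ that arise (of bounded order) lie in~$\OKell$ once~$K$ is taken large enough, or cancel against the corresponding shift of the Fourier support. With such an~$N$ every $c(f |_k (\ga,\om); n)$ lies in~$\OKell$, which is the desired inclusion.

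The main obstacle will be the precise bookkeeping of the automorphy factor $\om(\tau)^{2k}$ in half\nbd{}integral weight: unlike the integral\nbd{}weight case it contributes the metaplectic theta\nbd{}multiplier, that is, Gauss sums and fractional powers of~$c$, so I must verify that all resulting denominators are supported on the finitely many primes already slated to divide~$N$ and that the attendant roots of unity have order bounded independently of~$(\ga,\om)$. A secondary point, needed to keep the argument uniform, is that one may avoid any finite\nbd{}dimensionality hypothesis on~$V$ by carrying out the estimate coefficient by coefficient, or equivalently by restricting to the subspaces of bounded pole order, on each of which the above analysis is finite; this is also where the representation\nbd{}theoretic reformulation enters, since it is exactly the subgroup of~$\Ga$ acting integrally on this lattice whose index we are controlling by a congruence condition.
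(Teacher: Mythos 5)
You should first be aware that the paper contains no proof of this statement at all: it is imported wholesale, with the remark ``The next statement corresponds to Theorem~2.2 of~\cite{raum-2022}, but does not pinpoint the level~$N$'', and that cited proof rests on the Deligne--Rapoport/Katz $q$\nbd{}expansion principle together with Jochnowitz's unpublished extension to half-integral weight. Your sketch names exactly this machinery, so the question is whether your outline would reassemble that argument. It would not, because it assumes away the decisive step. You write that $(\ga,\om) \in \tdGa_0(N)$ forces $N \isdiv c$, ``so that $a/c$ is $\Ga_0(N)$\nbd{}equivalent to $\infty$ and hence lies in the same connected component of the relevant integral model.'' That ``hence'' \emph{is} the theorem. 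The curve to which the $q$\nbd{}expansion principle applies is the one attached to the congruence subgroup of level, say, $N'$ for which the elements of $V$ are modular; on it, $\infty$ and $a/c$ are in general distinct cusps even when $N \isdiv c$, and $\OKell$\nbd{}integrality propagates only along connected components of the model over $\OKell$, equivalently of its special fiber in characteristic $\ell$. When $\ell \isdiv N'$ that special fiber is reducible (Deligne--Rapoport: Igusa curves crossing at the supersingular points) and the cusps are distributed over several components; determining which cusps share the component of $\infty$ is precisely the geometric input, and it is what forces $N$ to be divisible by a suitable power of $\ell$ --- whence the paper's remark that the restriction to $\tdGa_0(N_\ell)$ ``originates directly from Deligne--Rapoport'' and is ``inherently tied to the geometry of the moduli of elliptic curves over characteristic $\ell$.'' Your list of primes that must divide $N$ (level, cusp widths, and $2$) never mentions the residue characteristic $\ell$, and your reasoning nowhere uses characteristic-$\ell$ geometry.

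Your substitute mechanism --- that the $q$\nbd{}expansion principle propagates integrality from $\infty$ to \emph{every} cusp $a/c$ ``up to a root of unity whose order divides the cusp width and up to the scalar contributed by $\om(\tau)^{2k}$'', after which a highly divisible $N$ ``clears every denominator'' --- is false. Across distinct components of the special fiber the failure of integrality is not by a bounded root of unity or a multiplier: the form $\Delta(\ell\tau)$ is modular for $\Ga_0(\ell)$ with integral coefficients, yet $\big(\Delta(\ell\,\cdot\,)\big)\big|_{12}\,S = \ell^{-12}\,\Delta(\,\cdot\,\slash\ell)$, whose coefficients carry the denominator $\ell^{12}$; here $S\infty = 0$ and $\infty$ lie on the two different components of $X_0(\ell)$ in characteristic $\ell$, and no bookkeeping of roots of unity of order dividing the cusp width accounts for $\ell^{-12}$. (This does not contradict the theorem, since $S \notin \tdGa_0(N)$, but it refutes the mechanism your proof relies on.) A secondary problem is your appeal to taking ``$K$ large enough'': $K$ and $\OKell$ are fixed before the theorem, enlarging $K$ changes $\bbV$ and hence the statement, and since elements of $V$ need not have coefficients in $K$ you cannot descend from integrality over a larger field back to $\OKell$. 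The purely formal parts of your outline --- reducing to the inclusion $\bbV|_k(\ga,\om) \subseteq \bbV$ and handling infinite-dimensional $V$ coefficient by coefficient --- are fine, but they carry none of the weight.
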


The restriction to~$\Ga \cap \tdGa_0(N_\ell)$ that appears in the theorem originates directly from Deligne-Rapoport and cannot be removed as it inherently tied to the geometry of the moduli of elliptic curves over characteristic~$\ell$. Note that as~$\bbV$ is preserved by the action of~$\Ga \cap \tdGa_0(N_\ell)$, then so is~$\ell \bbV$, which is the submodule of forms~$f \in V$ whose coefficients are divisible by~$\ell$, i.e.\@ contained in the ideal~$\ell$.

The second insight that enabled a representation theoretic perspective on Ramanujan-type congruences in previous work is the observation that abstract spaces of modular forms encompass the dual of vector-valued Hecke operators~\cite{raum-2017}. It is an idea inspired by other fields, namely the local theory of automorphic representations where fixed vectors for compact open subgroups of, say, $\SL{2}(\QQ_p)$ play a crucial role. We replace the formulation used in~\cite{raum-2022} by the setup provided in Lemma~\ref{la:hecke_representation}. Note that it combines the vector-valued Hecke operators from~\cite{raum-2022} and induction to~$\Mp{1}(\ZZ)$ in one statement.

We need the extension~$\GMp{1}^+(\RR)$ of~$\Mp{1}(\RR)$ to positive similitudes, the usual extension of the slash action to~$\GMp{1}^+(\RR)$ normalized by the determinant
\begin{gather*}
  \big( f \big|_k (\ga, \om) \big)(\tau)
\defeq
  \det(\ga)^{\frac{k}{2}}\,
  \om(\tau)^{-2k}\,
  f(\ga \tau)
\tx{,}  
\end{gather*}
and for positive integers~$M$ the sets
\begin{gather*}
  \GL{2}^{(M)}(\ZZ)
\defeq
  \big\{
  \ga \in \Mat{2}(\ZZ)
  \condcol
  \det(\ga) = M
  \big\}
\tx{,}\quad
  \GMp{1}^{(M)}(\ZZ)
\defeq
  \big\{
  (\ga,\om) \in \GMp{1}(\RR)
  \condcol
  \ga \in \GL{2}^{(M)}(\ZZ)
  \big\}
\tx{.}
\end{gather*}
We will use the element~$\delta_M \defeq \begin{psmatrix} 1 & 0 \\ 0 & M \end{psmatrix}$ and identify it with its lift~$\big( \begin{psmatrix} 1 & 0 \\ 0 & M \end{psmatrix}, \sqrt{M} \big)$. It allows us to define the representation
\begin{gather}
\label{eq:def:hecke_representation}
  \Hecke{M}(V)
\defeq
  \Ind_{\delta_M^{-1} \Ga \delta_M \cap \Mp{1}(\ZZ)}^{\Mp{1}(\ZZ)}\, \iota^\ast(V)
\end{gather}
for a representation~$V$ of~$\Ga$. The most common case will be~$V = \chi$ for a character~$\chi$ of~$\Ga \subseteq \Mp{1}(\ZZ)$. Recall that this, for instance, applies if~$V = \CC f$ for a weakly holomorphic modular form for~$\chi$.

\begin{lemma}%
\label{la:hecke_representation}
Suppose that~$V$ is a space of functions on~$\HS$ that is stable under the weight~$k$ slash action of a subgroup~$\Ga \subseteq \Mp{1}(\ZZ)$. Then given a positive integer~$M$ we have the homomorphism of\/~$\Ga$\nbd{}representations
\begin{gather}
\label{la:eq:hecke_representation}
\begin{aligned}
  \Hecke{M}(V)
&\lra
  \linspan_\CC\big\{
  f \big|_k\, \delta
  \condcol
  f \in V,\ 
  \delta \in \GMp{1}^{(M)}(\ZZ)
  \big\}
\tx{,}
\\
  f \otimes (\ga,\om)
&\lmto
  f \big|_k\, \delta_M (\ga, \om)
\tx{,}
\end{aligned} 
\end{gather}
where~$\iota^\ast(V)$ is the pullback of\/~$V$ along the embedding
\begin{gather*}
  \iota
=
  \iotahecke{M}
\defcol
  \tdGa^0(M)
\lra
  \Mp{1}(\ZZ)
\tx{,}\quad
  \big( \begin{psmatrix} a & b \\ c & d \end{psmatrix}, \om \big)
\lmto
  \big( \begin{psmatrix} a & b \slash M \\ M c & d \end{psmatrix}, \tau \mto \om(M \tau) \big)
\tx{.}
\end{gather*}
\end{lemma}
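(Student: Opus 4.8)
The plan is to obtain the map from the universal property of induction (Frobenius reciprocity), so that the only genuine input is the identification of~$\iota$ with conjugation by~$\delta_M$, together with a short metaplectic bookkeeping. Write $H \defeq \delta_M^{-1} \Ga \delta_M \cap \Mp{1}(\ZZ)$ for the subgroup that $\Hecke{M}(V)$ is induced from. First I would record that $H \subseteq \tdGa^0(M)$ and that $\iota$ restricts to conjugation by~$\delta_M$ on it: if $h = \delta_M^{-1}(\ga,\om)\delta_M \in \Mp{1}(\ZZ)$ with $(\ga,\om) \in \Ga$ and $\ga = \begin{psmatrix} a & b \\ c & d\end{psmatrix}$, then the underlying matrix of~$h$ is $\begin{psmatrix} a & bM \\ c/M & d\end{psmatrix}$, forcing $M \isdiv c$ and hence the upper-right entry to be divisible by~$M$. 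A direct check of the metaplectic multiplication law $(\ga_1,\om_1)(\ga_2,\om_2) = (\ga_1\ga_2, \om_1\circ\ga_2\cdot\om_2)$ then shows that the restriction of $\iota = \iotahecke{M}$ to~$H$ equals $h \mto \delta_M h \delta_M^{-1}$ as elements of~$\Mp{1}(\RR)$; in particular $\iota(h) \in \Ga$, so that $\iota^\ast(V)$ is a genuine right $H$\nbd{}representation on which $h$ acts through the weight~$k$ slash action of $\iota(h) \in \Ga$.

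Next I would produce the ``seed'' homomorphism at the level of~$H$. Define
\begin{gather*}
  \Phi \defcol \iota^\ast(V) \lra \linspan_\CC\big\{ f\big|_k\,\delta \condcol f\in V,\ \delta\in\GMp{1}^{(M)}(\ZZ)\big\},
  \qquad f \lmto f\big|_k\,\delta_M.
\end{gather*}
Since $\delta_M \in \GMp{1}^{(M)}(\ZZ)$, this lands in the stated span. Its $H$\nbd{}equivariance is the crux and follows from the identity of the previous paragraph together with the fact that the slash action is an honest right action of~$\GMp{1}^+(\RR)$: for $h \in H$, writing the $H$\nbd{}action on $\iota^\ast(V)$ as $f \mto f\big|_k\iota(h)$, one computes
\begin{gather*}
  \Phi\big(f\big|_k\iota(h)\big) = \big(f\big|_k(\delta_M h \delta_M^{-1})\big)\big|_k\delta_M = f\big|_k(\delta_M h) = \big(f\big|_k\delta_M\big)\big|_k h = \Phi(f)\big|_k h.
\end{gather*}

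Finally I would invoke the universal property of $\Hecke{M}(V) = \iota^\ast(V)\otimes_{\CC[H]}\CC[\Mp{1}(\ZZ)]$: a right $H$\nbd{}homomorphism from $\iota^\ast(V)$ into the restriction of an $\Mp{1}(\ZZ)$\nbd{}representation extends uniquely to an $\Mp{1}(\ZZ)$\nbd{}homomorphism out of the induction. Applying this to~$\Phi$ yields the well-defined map
\begin{gather*}
  f\otimes(\ga,\om) \lmto \Phi(f)\big|_k(\ga,\om) = f\big|_k\,\delta_M(\ga,\om),
\end{gather*}
which is exactly~\eqref{la:eq:hecke_representation}; its image lies in the target since $\delta_M(\ga,\om)\in\GMp{1}^{(M)}(\ZZ)$ for every $(\ga,\om)\in\Mp{1}(\ZZ)$. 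Being $\Mp{1}(\ZZ)$\nbd{}equivariant, it is a fortiori a homomorphism of $\Ga$\nbd{}representations, as claimed.

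I expect the main obstacle to be the metaplectic bookkeeping in the first step: confirming that~$\iota$, which twists the automorphy factor by $\om\mto(\tau\mto\om(M\tau))$, coincides on~$H$ with the group-theoretic conjugate $\delta_M h\delta_M^{-1}$, where the constant factors $\sqrt{M}$ and $1/\sqrt{M}$ attached to $\delta_M$ and $\delta_M^{-1}$ and the M\"obius action $\tau\mto M\tau$ of~$\delta_M^{-1}$ must recombine precisely. Once this identity is secured, every remaining step is a formal consequence of the right-module structure of the slash action and the tensor--hom adjunction defining induction.
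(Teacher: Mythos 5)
Your proposal is correct and takes essentially the same route as the paper: both arguments reduce everything to the single intertwining identity $\delta_M\, h = \iota(h)\, \delta_M$ in $\GMp{1}^+(\RR)$, equivalently $\big( f \big|_k \delta_M \big) \big|_k h = \big( f \big|_k \iota(h) \big) \big|_k \delta_M$ for $h \in \delta_M^{-1}\Ga\delta_M \cap \Mp{1}(\ZZ)$, which the paper states directly as the sufficient condition for well-definedness and equivariance, while you package the same fact through the tensor--hom adjunction defining the induction. Your write-up is in fact slightly more explicit than the paper's, since you carry out the identification of $\iota$ restricted to $\delta_M^{-1}\Ga\delta_M \cap \Mp{1}(\ZZ)$ with conjugation by $\delta_M$, including the metaplectic factors --- precisely the verification the paper leaves implicit.
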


\begin{proof}
Since~$\delta_M \ga$ has determinant~$M$ for all~$\ga \in \SL{2}(\ZZ)$, the image given in~\eqref{la:eq:hecke_representation} lies in the stated co-domain. To see that the map is well-defined and yields a homomorphism, il suffices to verify that for~$(\ga,\om) \in \delta_M^{-1} \Ga \delta_M \cap \Mp{1}(\ZZ)$ we have
\begin{gather*}
  \Big(
  f
  \big|_k \begin{psmatrix} 1 & 0 \\ 0 & M \end{psmatrix}
  \Big)
  \Big|_k (\ga, \om)
=
  \Big( f
  \big|_k \big( \begin{psmatrix} a & b \slash M \\ M c & d \end{psmatrix}, \tau \mto \om(M \tau) \big)
  \Big)
  \Big|_k \begin{psmatrix} 1 & 0 \\ 0 & M \end{psmatrix}
\tx{.}
\end{gather*}
\end{proof}

Given a weakly holomorphic modular form~$f$ whose Fourier coefficients are supported on~$\beta + \ZZ$ for some~$\beta \in \QQ$, we find that
\begin{gather}
\label{eq:fourier_expansion_projection_rc}
  M^{1 - \frac{k}{2}}\,
  \sum_{h \pmod{M}}
  e\big( \mfrac{-h \beta}{M} \big)\,
  f \big|_k\, 
  \big( \begin{psmatrix} 1 & h \\ 0 & M \end{psmatrix}, \sqrt{M} \big)
=
  \sum_{n \in \beta + M \ZZ}
  c(f;n)\,
  e\big( \mfrac{\tau}{M} \big)
\tx{.}
\end{gather}
Note that the summation over~$h \pmod{M}$ is well-defined because we have~$f |_k\, T = e(\beta) f$. The role played by this formula corresponds to Proposition~2.4 in~\cite{raum-2022}. However, it does not include a representation theoretic perspective yet, which we formulate as the next corollary to Lemma~\ref{la:hecke_representation}. Since~$\CC f$ is~$\chi$\nbd{}isotypical if~$f$ is a weakly holomorphic modular form for~$\chi$, this corollary allows us to examine the Fourier expansions in~\eqref{eq:fourier_expansion_projection_rc} without reference to the weakly holomorphic modular form~$f$, but only its character. We need the element
\begin{gather}
\label{eq:def:hecke_vector}
  \hecke{M}(\chi; \beta)
\defeq
  M^{1 - \frac{k}{2}}\mspace{-12mu}
  \sum_{h \pmod{M}}
  e\big( \mfrac{-h \beta}{M} \big)\, T^h
\mathrel{\;\in\;}
  \Hecke{M}(\chi)
\tx{.}
\end{gather}

Given a weakly holomorphic modular form for a character of~$\Ga \subset \Mp{1}(\ZZ)$ with Fourier expansion supported on~$\beta + \ZZ$, we will always assume without loss of generality that~$T \in \Ga$ and~$\chi(T) = e(\beta)$. Note however that this does not mean that~$\hecke{M}(\chi; \beta)$ equals a multiple of the identity in~$\Mp{1}(\ZZ)$, since it lies in the induction of a pullback of~$\chi$.

The next statement parallels~Corollary~2.5 of~\cite{raum-2022}.

\begin{corollary}
\label{cor:rc_characterization}
Fix a weakly holomorphic modular form~$f$ for a character~$\chi$ of\/~$\Ga \subseteq \Mp{1}(\ZZ)$. Assume that the Fourier coefficients of~$f$ are supported on~$\beta + \ZZ$ for some~$\beta \in \QQ$. Then given a positive integer~$M$, $f$ has a Ramanujan-type congruence modulo~$\ell$ on~$M \ZZ + \beta$ if and only if\/~$\psi(\hecke{M}(\chi;\beta)) \in \ell \bbV$, where~$\psi$ is the homomorphism in Lemma~\ref{la:hecke_representation} and~$V = \CC f$.
\end{corollary}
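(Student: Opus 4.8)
The plan is to prove the equivalence by directly unwinding the definitions of~$\psi$ and~$\hecke{M}(\chi;\beta)$ and then invoking the projection formula~\eqref{eq:fourier_expansion_projection_rc}. Since~$V = \CC f$ is~$\chi$\nbd{}isotypical, the source~$\Hecke{M}(V)$ of~$\psi$ may be identified with~$\Hecke{M}(\chi)$, so that~$\hecke{M}(\chi;\beta)$ from~\eqref{eq:def:hecke_vector} is a genuine element of the source and the generator of~$\iota^\ast(\chi)$ corresponds to~$f$. First I would apply~$\psi$ termwise, using~$\psi(f \otimes T^h) = f|_k\,\delta_M T^h$ from Lemma~\ref{la:hecke_representation} together with~$\delta_M T^h = \big( \begin{psmatrix} 1 & h \\ 0 & M \end{psmatrix}, \sqrt{M} \big)$, to arrive at
\begin{gather*}
  \psi\big( \hecke{M}(\chi;\beta) \big)
=
  M^{1-\frac{k}{2}}\mspace{-12mu}
  \sum_{h \pmod{M}}
  e\big( \mfrac{-h\beta}{M} \big)\,
  f \big|_k\, \big( \begin{psmatrix} 1 & h \\ 0 & M \end{psmatrix}, \sqrt{M} \big)
\tx{.}
\end{gather*}

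Next I would observe that this is exactly the left-hand side of~\eqref{eq:fourier_expansion_projection_rc}, whose well-definedness as a sum over~$h \pmod{M}$ rests precisely on our standing assumption~$\chi(T) = e(\beta)$, i.e.\@~$f|_k\,T = e(\beta)f$. Hence~$\psi(\hecke{M}(\chi;\beta))$ equals the projection~$\sum_{n \in M\ZZ + \beta} c(f;n)\, e(n\tau/M)$ of~$f$ onto the Fourier modes supported on~$M\ZZ + \beta$. The structural point I would then emphasize is that, after the harmless reindexing~$n \mapsto n/M$, the nonzero Fourier coefficients of this projection are exactly the values~$c(f;n)$ for~$n \in M\ZZ + \beta$, and nothing else.

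With this identification in hand, the equivalence reads off immediately. By the definition~\eqref{eq:def:integral_fourier_coefficients} of the integral submodule, applied to the codomain of~$\psi$, the membership~$\psi(\hecke{M}(\chi;\beta)) \in \ell\bbV$ is equivalent to all Fourier coefficients of the projection lying in~$\ell\OKell$; by the previous paragraph this says exactly that~$c(f;n) \in \ell\OKell$ for every~$n \in M\ZZ + \beta$, which is the definition of a Ramanujan-type congruence modulo~$\ell$ on~$M\ZZ + \beta$. This closes both directions of the equivalence simultaneously, since each step is reversible.

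The one place that requires care, rather than a genuine obstacle, is the reading of~$\ell\bbV$: the homomorphism~$\psi$ does not land in~$\CC f$ but in the larger space~$\linspan_\CC\{ f|_k\,\delta : \delta \in \GMp{1}^{(M)}(\ZZ) \}$, so~$\bbV$ must be understood as the integral structure~\eqref{eq:def:integral_fourier_coefficients} attached to this codomain (equivalently, to the image of~$\psi$), the datum~$V = \CC f$ only serving to pin down which~$\psi$ is meant. Once the bijection between the coefficients of the projection and the coefficients~$c(f;n)$ on the progression is made explicit, no computation beyond the already-established~\eqref{eq:fourier_expansion_projection_rc} remains.
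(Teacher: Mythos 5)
Your argument is correct and coincides with the paper's intended one: the corollary is stated there without a separate proof, being the immediate combination of Lemma~\ref{la:hecke_representation} (applied termwise to~$\hecke{M}(\chi;\beta)$, which produces exactly the left-hand side of~\eqref{eq:fourier_expansion_projection_rc}) with the fact that~$\ell\bbV$ consists of the forms whose Fourier coefficients all lie in the ideal~$\ell$ --- precisely your chain of reductions. Your reading of~$\bbV$ as the integral structure~\eqref{eq:def:integral_fourier_coefficients} attached to the codomain~$\linspan_\CC\{ f|_k\,\delta \condcol \delta \in \GMp{1}^{(M)}(\ZZ)\}$ rather than to~$\CC f$ is also the reading the paper itself relies on in the proof of Theorem~\ref{thm:square_classes_rc}.
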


We merely want to reprove a small part of the results of~\cite{raum-2022,raum-2023}, and for this it suffices to examine subrepresentations of the induction in Lemma~\ref{la:hecke_representation}. Note that the next lemma has no analogue in~\cite{raum-2022}, but appears as part of the proof of Proposition~3.5 in loc.\@ cit. We set
\begin{gather}
\label{eq:def:gamma_diag}
  \Ga^\Delta_M
\defeq
  \big\{
  \ga = \begin{psmatrix} a & b \\ c & d \end{psmatrix}
  \in \Ga
  \condcol
  T^{a^2 h} \ga T^{-h} \in \delta_M^{-1} \Ga \delta_M
  \tx{\ for all\ } h \in \ZZ,\ 
  \ga \tx{\ diagonal modulo\ }M
  \big\}
\end{gather}
and
\begin{gather}
\label{eq:def:gamma_diag_char}
  \Ga^{\Delta}_M(\chi)
\defeq
  \big\{
  \ga = \begin{psmatrix} a & b \\ c & d \end{psmatrix} \in \Ga^\Delta_M
  \condcol
  \wtd\chi\big( T^{a^2 h} \ga T^{-h} \big) = \wtd\chi(\ga)
  \tx{\ for all\ } h \in \ZZ,\ 
  \big\}
\quad\tx{with\ }
  \wtd\chi = \iotahecke{M}^\ast\,\chi
\tx{.}
\end{gather}

\begin{proposition}%
\label{prop:action_on_hecke_vector_span}
Fix a group~$\Ga \subseteq \Mp{1}(\ZZ)$, a character~$\chi$ of\/~$\Ga$, a positive integer~$M$, and~$\beta \in \QQ$. Then~$\Ga^\Delta_M$ is a group and acts on the subspace
\begin{gather*}
  \linspan_\CC
  \big\{
  \hecke{M}(\chi; \beta)
  \big\}
\subseteq
  \Hecke{M}(\chi)
\tx{.}
\end{gather*}
Further, $\Ga^\Delta_M(\chi)$ is a group that acts projectively on the vectors~$\hecke{M}(\chi; \beta)$ with character~$\wtd\chi$ as in~\eqref{eq:def:gamma_diag_char}:
\begin{gather*}
  \hecke{M}(\chi; \beta)\, \ga
=
  \wtd\chi(\ga)\,
  \hecke{M}(\chi; a^2 \beta)
\tx{,}\quad
  \ga
=
  \begin{psmatrix} a & b \\ c & d \end{psmatrix}
\in
  \Ga^\Delta_M(\chi)
\tx{.}
\end{gather*}
\end{proposition}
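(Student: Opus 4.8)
The plan is to work entirely inside the induced module $\Hecke{M}(\chi)$ from \eqref{eq:def:hecke_representation}, writing $T^h$ for the class of the corresponding group element and setting $H\defeq\delta_M^{-1}\Ga\delta_M\cap\Mp{1}(\ZZ)$, the subgroup over which we induce. The only structural fact I will use is the defining relation of the induction: for $\eta\in H$ and any $x$ one has $\eta x=\wtd\chi(\eta)\,x$ in $\Hecke{M}(\chi)$, together with $\wtd\chi(T^M)=\chi(T)=e(\beta)$, which holds because $T^M=\delta_M^{-1}T\delta_M\in H$ and $\iotahecke{M}(T^M)=T$. The strategy is to show that right multiplication by $\ga\in\Ga^\Delta_M$ acts as a generalized permutation (``monomial'') operator on the spanning set $\{T^h\}$, and that the extra condition \eqref{eq:def:gamma_diag_char} pins the scalars to a single value.

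The heart of the matter is one reduction identity. For $\ga=\begin{psmatrix}a&b\\c&d\end{psmatrix}\in\Ga^\Delta_M$ and any $n\in\ZZ$, the defining condition \eqref{eq:def:gamma_diag} gives $T^{a^2 n}\ga T^{-n}\in\delta_M^{-1}\Ga\delta_M$; as this element is also an integral matrix of determinant one, it lies in $H$, whence
\begin{gather*}
  T^{a^2 n}\ga = \big(T^{a^2 n}\ga T^{-n}\big)\,T^{n} = \wtd\chi\big(T^{a^2 n}\ga T^{-n}\big)\,T^{n}\quad\text{in }\Hecke{M}(\chi).
\end{gather*}
Since $\ga$ is diagonal modulo $M$ we have $\gcd(a,M)=1$, so $n\mapsto a^2 n$ is a bijection on $\ZZ/M\ZZ$. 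Reindexing the sum \eqref{eq:def:hecke_vector} by $h=a^2 n$ and absorbing the resulting powers of $T^M$ (each contributing a factor $e(\beta)$ that cancels against the phase $e(-h\beta/M)$) yields
\begin{gather*}
  \hecke{M}(\chi;\beta)\,\ga
  = M^{1-\frac k2}\sum_{n\pmod M} e\big(\mfrac{-a^2 n\beta}{M}\big)\,
  \wtd\chi\big(T^{a^2 n}\ga T^{-n}\big)\,T^{n}.
\end{gather*}
Each summand is a multiple of some $T^n$, so the right-hand side lies in $W\defeq\linspan_\CC\{T^n:n\pmod M\}$, which is exactly the span of the Hecke vectors; this is the first assertion. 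If moreover $\ga\in\Ga^\Delta_M(\chi)$, then by \eqref{eq:def:gamma_diag_char} the factor $\wtd\chi(T^{a^2 n}\ga T^{-n})$ is the constant $\wtd\chi(\ga)$, and the sum collapses to $\wtd\chi(\ga)\,\hecke{M}(\chi;a^2\beta)$, the claimed projective formula.

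It remains to prove the two group statements. For $\Ga^\Delta_M$ I would argue directly: given $\ga_1,\ga_2\in\Ga^\Delta_M$ with upper-left entries $a_1,a_2$, the product is diagonal modulo $M$ with upper-left entry $a_{12}\equiv a_1a_2\pmod M$, so $T^{a_{12}^2 n}$ and $T^{a_1^2a_2^2 n}$ differ by a power of $T^M\in H\subseteq\delta_M^{-1}\Ga\delta_M$. Inserting $T^{-a_2^2 n}T^{a_2^2 n}$ between the factors gives, with $m\defeq(a_{12}^2-a_1^2a_2^2)n/M\in\ZZ$,
\begin{gather*}
  T^{a_{12}^2 n}\ga_1\ga_2 T^{-n}
  = (T^M)^{m}\big(T^{a_1^2a_2^2 n}\ga_1 T^{-a_2^2 n}\big)\big(T^{a_2^2 n}\ga_2 T^{-n}\big),
\end{gather*}
a product of three elements of the group $\delta_M^{-1}\Ga\delta_M$, hence in $\delta_M^{-1}\Ga\delta_M$; closure under inversion follows by inverting the relation $T^{a^2 n}\ga T^{-n}\in\delta_M^{-1}\Ga\delta_M$ and reparametrizing using $d\equiv a^{-1}\pmod M$. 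For $\Ga^\Delta_M(\chi)$ the cleanest route is through the action just constructed. The reduction identity shows that condition \eqref{eq:def:gamma_diag_char} is equivalent to the statement that right multiplication $\rho(\ga)$ is a single scalar $\wtd\chi(\ga)$ times a permutation matrix in the basis $\{\hecke{M}(\chi;\beta+j)\}_{j\bmod M}$ of $W$—here one uses that \eqref{eq:def:gamma_diag_char} taken at $n=M$ forces $(a^2-1)\beta\in\ZZ$, so that $\hecke{M}(\chi;a^2\beta)$ is again one of these basis vectors. Since the scalar-times-permutation matrices form a subgroup $G_0\subseteq\mathrm{GL}(W)$ and $\rho\colon\Ga^\Delta_M\to\mathrm{GL}(W)$ is an anti-homomorphism, the set $\Ga^\Delta_M(\chi)=\rho^{-1}(G_0)$ is a group, and $\ga\mapsto\wtd\chi(\ga)$ is the induced character.

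The step I expect to be most delicate is the scalar bookkeeping around $T^M$. Both the reindexing in the main identity and the multiplicativity needed for $\Ga^\Delta_M(\chi)$ hinge on $\wtd\chi(T^M)=e(\beta)$; a naive head-on verification of closure for $\Ga^\Delta_M(\chi)$ produces a spurious scalar that is a power of $e(\beta)$, and it is trivial only because the definition secretly enforces $(a^2-1)\beta\in\ZZ$. Phrasing closure as ``$\rho(\ga)\in G_0$'' and invoking that $G_0$ is a subgroup is what lets me avoid chasing these scalars by hand; the point requiring the most care is getting that reformulation exactly right, in particular verifying that \eqref{eq:def:gamma_diag_char} is genuinely \emph{equivalent} to membership in $\rho^{-1}(G_0)$ and not merely implied by it.
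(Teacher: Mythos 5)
Your computational core---the reduction identity $T^{a^2 n}\ga = \wtd\chi(T^{a^2 n}\ga T^{-n})\,T^n$ in $\Hecke{M}(\chi)$, the reindexing $h = a^2 n$ with the $e(\beta)$ bookkeeping, and the collapse of the sum under \eqref{eq:def:gamma_diag_char}---is exactly the paper's computation, and your closure argument for $\Ga^\Delta_M$ is the paper's factorization identity $T^{a_1^2a_2^2 n}\ga_1 T^{-a_2^2 n}\cdot T^{a_2^2 n}\ga_2 T^{-n} = T^{a_1^2a_2^2 n}\ga_1\ga_2 T^{-n}$, carried out with more care than the paper (explicit $T^M$ bookkeeping for $a_{12}$ versus $a_1a_2$, and inverses). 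Up to that point the proposal is sound.

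The genuine gap is the group property of $\Ga^\Delta_M(\chi)$. You reduce it to the claim $\Ga^\Delta_M(\chi)=\rho^{-1}(G_0)$, but only the inclusion $\Ga^\Delta_M(\chi)\subseteq\rho^{-1}(G_0)$ follows from your reduction identity; the reverse inclusion, which your argument needs and which you yourself flag as the delicate point, is not available. Concretely, set $c_h\defeq\wtd\chi(T^{a^2h}\ga T^{-h})$ for $\ga\in\Ga^\Delta_M$. Your formula shows that $\hecke{M}(\chi;\beta+j)\,\ga$ is a scalar multiple of a single vector $\hecke{M}(\chi;\beta+j')$ precisely when $c_h = c_0\,r^h$ is geometric in $h$, and \emph{any} ratio $r$ with $r^M = e((a^2-1)\beta)$ then works: the quasi-periodicity $c_{h+M} = e((a^2-1)\beta)\,c_h$, valid for every $\ga\in\Ga^\Delta_M$ since $T^M\in H$ and $\wtd\chi(T^M)=e(\beta)$, forces the image parameter $a^2(\beta+j)-\mu$ (where $r = e(\mu/M)$) back into $\beta+\ZZ$, so $\rho(\ga)$ is a scalar times a permutation of the basis. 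Thus $\rho(\ga)\in G_0$ is equivalent to $c_h$ being \emph{geometric}, whereas \eqref{eq:def:gamma_diag_char} is the strictly stronger condition that $c_h$ be \emph{constant} ($r=1$). The successive ratios $c_{h+1}c_h^{-1} = \wtd\chi\big((T^{a^2(h+1)}\ga T^{-(h+1)})(T^{a^2h}\ga T^{-h})^{-1}\big)$ are values of $\wtd\chi$ at elements of $H$ that \eqref{eq:def:gamma_diag} places in $\delta_M^{-1}\Ga\delta_M$ but does not otherwise constrain, so nothing forces $r=1$, and the equivalence you need cannot be extracted from the definitions. Your fallback remark is also unsound as stated: in a head-on verification the spurious scalar is $e(Mkh\beta)$ with $M^2k = a_{12}^2 - a_1^2a_2^2$, and $(a_i^2-1)\beta\in\ZZ$ does \emph{not} imply $Mk\beta\in\ZZ$; for instance $M=5$, $a_1=a_2=7$, $b_1=5$, $c_2=25$, $\beta=\tfrac{1}{24}$ gives $(a_i^2-1)\beta = 2\in\ZZ$ but $Mk\beta = \tfrac{5575}{24}\notin\ZZ$. (This discrepancy between $a_{12}$ and $a_1a_2$ in the character condition is a point on which the paper's own proof is silent as well.)

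The paper avoids the detour entirely and proves closure of $\Ga^\Delta_M(\chi)$ directly from the same factorization identity: both factors lie in $H$, where $\wtd\chi$ is a character, hence $\wtd\chi(T^{a_1^2a_2^2 n}\ga_1\ga_2T^{-n}) = \wtd\chi(T^{a_1^2a_2^2 n}\ga_1T^{-a_2^2 n})\,\wtd\chi(T^{a_2^2 n}\ga_2T^{-n}) = \wtd\chi(\ga_1)\,\wtd\chi(\ga_2) = \wtd\chi(\ga_1\ga_2)$; that is, the condition for $\ga_1$ evaluated at $a_2^2 n$ together with the condition for $\ga_2$ at $n$ yields the condition for $\ga_1\ga_2$ at $n$. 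Replacing your $G_0$ argument by this two-line computation (and handling the $a_{12}$-versus-$a_1a_2$ shift head-on, which is where any additional integrality of $\beta$ must actually be argued) is what is needed to repair the proof.
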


\begin{proof}
To see that~$\Ga^\Delta_M$ and~$\Ga^\Delta_M(\chi)$ are groups, we consider
\begin{gather*}
  \ga
=
  \begin{psmatrix} a & b \\ c & d \end{psmatrix}
\equiv
  \begin{psmatrix} a & 0 \\ 0 & d \end{psmatrix}
  \;\pmod{M}
\quad\tx{and}\quad
  \ga'
=
  \begin{psmatrix} a' & b' \\ c' & d' \end{psmatrix}
\equiv
  \begin{psmatrix} a' & 0 \\ 0 & d' \end{psmatrix}
  \;\pmod{M}
\tx{,}
\end{gather*}
and note that
\begin{gather*}
  T^{a^2 a^{\prime 2} h} \ga T^{-a^{\prime 2} h}
  T^{a^{\prime 2} h} \ga' T^{-h}
=
  T^{a^2 a^{\prime 2} h} 
  \ga \ga'
  T^{-h}
\quad\tx{and}\quad
  \ga \ga'
\equiv
  \begin{psmatrix} a a' & 0 \\ 0 & d d' \end{psmatrix}
  \;\pmod{M}
\tx{.}
\end{gather*}

If~$\ga \in \Ga^\Delta_M$, we have
\begin{gather*}
  T^h \ga
=
  \big( T^h \ga T^{-d^2 h} \big)\,
  T^{d^2 h}
\quad\tx{with\ }
  T^h \ga T^{-d^2 h}
=
  T^{(1-a^2 d^2) h}\,
  T^{a^2 d^2 h} \ga T^{-d^2 h}
\in
  \delta_M^{-1} \Ga \delta_M \cap \Mp{1}(\ZZ)
\tx{,}
\end{gather*}
since~$M$ divides~$1 - a^2 d^2$ by definition of~$\Ga^\Delta_M$. In conjunction with the definition of the representation~$\Hecke{M}(\chi)$ in~\eqref{eq:def:hecke_representation} this implies that~$\Ga^\Delta_M$ acts as stated.

To show that~$\ga \in \Ga^\Delta_M(\chi)$ acts as claimed, we consider
\begin{align*}
&
  \hecke{M}(\chi;\beta)\, \ga
=
  M^{1 - \frac{k}{2}}\mspace{-12mu}
  \sum_{h \pmod{M}}
  e\big( \mfrac{-h \beta}{M} \big)\, T^h \ga
=
  M^{1 - \frac{k}{2}}\mspace{-12mu}
  \sum_{h \pmod{M}}
  e\big( \mfrac{-h a^2 \beta}{M} \big)\, T^{a^2 h} \ga
\\
={}&
  M^{1 - \frac{k}{2}}\mspace{-12mu}
  \sum_{h \pmod{M}}
  e\big( \mfrac{-h a^2 \beta}{M} \big)\, T^{a^2 h} \ga T^{-h}\, T^h
=
  M^{1 - \frac{k}{2}}\mspace{-12mu}
  \sum_{h \pmod{M}}
  e\big( \mfrac{-h a^2 \beta}{M} \big)\, \wtd\chi\big( T^{a^2 h} \ga T^{-h} \big)\, T^h
\\
={}&
  \wtd\chi(\ga)\,
  M^{1 - \frac{k}{2}}\mspace{-12mu}
  \sum_{h \pmod{M}}
  e\big( \mfrac{-h a^2 \beta}{M} \big)\, T^h
\\
={}&
  \wtd\chi(\ga)\,
  \hecke{M}(\chi, a^2 \beta)
\tx{.}
\end{align*}
\end{proof}

It remains to combine Proposition~\ref{prop:action_on_hecke_vector_span} with Theorem~\ref{thm:mf_representation} in order to obtain the relation among Ramanujan-type congurences on square-classes of~$\beta \,\pmod{M}$.

\begin{theorem}
\label{thm:square_classes_rc}
Let~$f$ be a weakly holomorphic modular form for a character~$\chi$ of\/~$\Ga \subseteq \Mp{1}(\ZZ)$. Assume that the kernel of~$\chi$ is a congruence subgroup.

If~$f$ has a Ramanujan-type congruence modulo~$\ell$ on~$M \ZZ + \beta$, then it has such congruences on~$M \ZZ + a^2 \beta$ for all~$a \in \ZZ$ such that there is~$\begin{psmatrix} a & b \\ c & d \end{psmatrix} \in \Ga^{\prime\Delta}_M(\chi)$, where~$\Ga' = \Ga \cap \Ga_0(N)$ for~$N$ as in Theorem~\ref{thm:mf_representation}.
\end{theorem}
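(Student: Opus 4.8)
The plan is to read both the hypothesis and the conclusion through Corollary~\ref{cor:rc_characterization}, and to bridge them using the projective action of Proposition~\ref{prop:action_on_hecke_vector_span}, with Theorem~\ref{thm:mf_representation} supplying exactly the integrality that makes the bridge land back in $\ell\bbV$. Throughout I view $f$ as a weakly holomorphic modular form for $\Ga' = \Ga\cap\tdGa_0(N)$ with character $\chi|_{\Ga'}$, so that Lemma~\ref{la:hecke_representation}, Corollary~\ref{cor:rc_characterization}, and Proposition~\ref{prop:action_on_hecke_vector_span} all apply with $\Ga'$ in place of $\Ga$; this is precisely why $\Ga^{\prime\Delta}_M(\chi)$ rather than $\Ga^\Delta_M(\chi)$ enters the statement. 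I write $\psi$ for the homomorphism of Lemma~\ref{la:hecke_representation} attached to $V = \CC f$, and $\bbV$ for the integral structure on its image, with $N$ chosen as in Theorem~\ref{thm:mf_representation}. Enlarging $K$ if necessary, I may assume it contains the values of $\chi$; these are roots of unity since $\ker\chi$ has finite index, so every value of $\wtd\chi$ is a unit in $\OKell$.

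First I would record the hypothesis via Corollary~\ref{cor:rc_characterization}: the Ramanujan-type congruence modulo $\ell$ on $M\ZZ + \beta$ is equivalent to $\psi(\hecke{M}(\chi;\beta)) \in \ell\bbV$. Next, fix $\ga = \begin{psmatrix} a & b \\ c & d \end{psmatrix} \in \Ga^{\prime\Delta}_M(\chi)$ as in the statement. Proposition~\ref{prop:action_on_hecke_vector_span} then furnishes the identity $\hecke{M}(\chi;\beta)\,\ga = \wtd\chi(\ga)\,\hecke{M}(\chi;a^2\beta)$ inside $\Hecke{M}(\chi)$.

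The heart of the argument is to transport this identity into the space of modular forms without leaving $\ell\bbV$. Since $\psi$ is a homomorphism of $\Ga'$\nbd{}representations, applying it yields $\psi(\hecke{M}(\chi;\beta))\big|_k\ga = \wtd\chi(\ga)\,\psi(\hecke{M}(\chi;a^2\beta))$. The one step I expect to carry the real weight is the claim that $\ga$ preserves $\ell\bbV$: this is exactly what the restriction to $\Ga' = \Ga\cap\tdGa_0(N)$ buys us, because $\ga\in\Ga^{\prime\Delta}_M(\chi)\subseteq\Ga\cap\tdGa_0(N)$ and Theorem~\ref{thm:mf_representation} guarantees that such elements act on $\bbV$, hence on $\ell\bbV$. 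Granting this, the left-hand side lies in $\ell\bbV$, and since $\wtd\chi(\ga)$ is a unit in $\OKell$ I conclude $\psi(\hecke{M}(\chi;a^2\beta)) \in \ell\bbV$.

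Finally I would invoke Corollary~\ref{cor:rc_characterization} once more, now in the direction from module membership to congruence, for the progression $M\ZZ + a^2\beta$: the relation $\psi(\hecke{M}(\chi;a^2\beta)) \in \ell\bbV$ is equivalent to $f$ having a Ramanujan-type congruence modulo $\ell$ on $M\ZZ + a^2\beta$. When $a^2\beta\not\equiv\beta\pmod{\ZZ}$ the class $a^2\beta+M\ZZ$ is disjoint from the support $\beta+\ZZ$ of $f$ and the congruence is vacuous; otherwise $\chi(T)=e(a^2\beta)$ and the corollary applies verbatim. The proof is thus a short composition of the earlier results, and the only genuinely delicate point is the stability of $\ell\bbV$ under $\ga$, which is what forces the passage from $\Ga$ to $\Ga'$.
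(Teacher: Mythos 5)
Your proof is correct and takes essentially the same route as the paper's: Corollary~\ref{cor:rc_characterization} to encode the congruence as membership in $\ell\bbV$, Proposition~\ref{prop:action_on_hecke_vector_span} for the projective action of~$\Ga^{\prime\Delta}_M(\chi)$, and Theorem~\ref{thm:mf_representation} for the stability of~$\ell\bbV$ under~$\Ga'$. The only (immaterial) difference is that the paper works with the $\Ga'$\nbd{}stable preimage~$\psi^{-1}(\ell\bbV)$ inside~$\Hecke{M}(\chi)$, whereas you push the identity forward through~$\psi$ and then divide by the unit~$\wtd\chi(\ga)$.
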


Before we prove Theorem~\ref{thm:square_classes_rc}, we specialize its last condition to the case of the characters~$\chi_\theta$ and~$\chi_\eta$ of the Jacobi~$\theta$ function and the Dedekind~$\eta$ function, we obtain the following statement that recovers Proposition~3.5 of~\cite{raum-2022} and Proposition~3.1 of~\cite{raum-2023}.

\begin{corollary}
\label{cor:square_classes_rc_eta_theta}
Let~$f$ be a weakly holomorphic modular form for the character~$\chi \chi_\eta$ or~$\chi \chi_\theta$ of\/~$\tdGa_0(N)$, where~$\chi$ is a Dirichlet character. If~$f$ has a Ramanujan-type congruence modulo~$\ell$ on~$M \ZZ + \beta$, then it has such congruences on~$M \ZZ + u^2 \beta$ for all~$u \in \ZZ$ with~$\gcd(M, u) = 1$.
\end{corollary}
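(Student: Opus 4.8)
The plan is to deduce the corollary from Theorem~\ref{thm:square_classes_rc} by specializing its last hypothesis to the characters $\chi_\eta$ and $\chi_\theta$. Writing $\Ga = \tdGa_0(N)$ and $\Ga' = \Ga \cap \tdGa_0(N_\ell)$ for the level $N_\ell$ furnished by Theorem~\ref{thm:mf_representation}, it suffices to produce, for every $u \in \ZZ$ with $\gcd(M,u) = 1$, a matrix $\ga = \begin{psmatrix} a & b \\ c & d \end{psmatrix} \in \Ga^{\prime\Delta}_M(\chi\chi_\eta)$ (respectively $\Ga^{\prime\Delta}_M(\chi\chi_\theta)$) with $a \equiv u \pmod M$. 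Since $a \equiv u \pmod M$ forces $a^2 \equiv u^2 \pmod M$, the progressions $M\ZZ + a^2\beta$ and $M\ZZ + u^2\beta$ agree, and Theorem~\ref{thm:square_classes_rc} then delivers the asserted congruence on $M\ZZ + u^2\beta$.

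First I would pin down which diagonal-modulo-$M$ matrices lie in $\Ga^\Delta_M$. For such $\ga$ one has $M \mid b$, $M \mid c$, and $ad \equiv 1 \pmod M$ from $\det \ga = 1$; a direct expansion shows that the upper-right entry of $T^{a^2 h} \ga T^{-h}$ is congruent to $a h (ad - 1) \equiv 0 \pmod M$ for every $h$, so the defining condition of~\eqref{eq:def:gamma_diag} holds automatically and $\Ga^\Delta_M$ contains all diagonal-modulo-$M$ elements of $\Ga'$. For the refined condition~\eqref{eq:def:gamma_diag_char} I would split $\wtd{\chi\chi_\eta}$ into its Dirichlet and multiplier parts. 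The Dirichlet factor is evaluated on the lower-right entry $d - ch$ of $T^{a^2 h}\ga T^{-h}$; because $N \mid c$ we have $ch \equiv 0 \pmod N$, so $\chi(d - ch) = \chi(d)$ and this part is manifestly $h$-invariant.

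The hard part will be the $\eta$- and $\theta$-multiplier factors, and this is exactly where I would exploit that $\chi_\eta$ is a character of the full group $\Mp{1}(\ZZ)$ while $\chi_\theta$ is a character of $\tdGa_0(4)$. Setting $W \defeq \iotahecke{M}(\ga)^{-1}\, \iotahecke{M}(T^{a^2 h}\ga T^{-h})$, multiplicativity of $\chi_\eta$ gives $\wtd{\chi_\eta}(T^{a^2 h}\ga T^{-h}) / \wtd{\chi_\eta}(\ga) = \chi_\eta(W)$. Using the explicit formula for $\iotahecke{M}$ from Lemma~\ref{la:hecke_representation}, a short computation shows $W \equiv \begin{psmatrix} 1 & w \\ 0 & 1 \end{psmatrix} \pmod{c}$ for an explicit integer $w$; stripping the factor $T^w$ leaves an element of $\tdGa(c)$, on which $\chi_\eta$ is trivial once $24 \mid c$, so that $\chi_\eta(W) = \chi_\eta(T)^{w} = e(w/24)$. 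I would then arrange $24 \mid w$: the two summands of $w$ carry factors $\tfrac{ad-1}{M}$ and $\tfrac{c}{M}$, so imposing $ad \equiv 1 \pmod{24M}$ and $24M \mid c$ makes each summand divisible by $24$. For $\chi_\theta$ the corresponding step is easier, since $\chi_\theta(T) = 1$ makes the factor disappear outright, requiring only that $c$ be divisible by the level on which $\chi_\theta$ trivializes.

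Finally I would supply the matrices by the Chinese remainder theorem. Given $u$ with $\gcd(M,u) = 1$, coprimality lets me pick $a \equiv u \pmod M$ with $\gcd(a,6) = 1$, hence $\gcd(a,24M) = 1$; I then solve $ad \equiv 1 \pmod{24M}$ for $d$, take $c$ to be a suitable multiple of $\lcm(24M, N_\ell)$, and choose $b$ with $M \mid b$ so that $\det \ga = 1$. This $\ga$ is diagonal modulo $M$, lies in $\Ga'$, and satisfies every divisibility requirement of the previous paragraph, so $\ga \in \Ga^{\prime\Delta}_M(\chi\chi_\eta)$ (respectively $\Ga^{\prime\Delta}_M(\chi\chi_\theta)$), and Theorem~\ref{thm:square_classes_rc} yields the congruence on $M\ZZ + u^2\beta$. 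The single genuine obstacle is the multiplier computation of the third paragraph, where the constant $24$ attached to $\chi_\eta(T)$ dictates the divisibility one must force; everything else is bookkeeping with congruences and the Chinese remainder theorem.
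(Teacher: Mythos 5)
Your overall reduction is the same as the paper's: the paper proves the corollary by asserting the containment $\tdGa_0(24MN) \subseteq \Ga^{\prime\Delta}_M(\chi)$ and feeding it into Theorem~\ref{thm:square_classes_rc}, which is exactly what you set out to verify, and your second paragraph (diagonal-mod-$M$ elements of $\Ga'$ lie in $\Ga^{\prime\Delta}_M$), your treatment of the Dirichlet factor, and your Chinese-remainder construction of matrices with prescribed $a \bmod M$ are all correct. The gap sits precisely at the step you flag as the crux. The claim that $\chi_\eta$ is trivial on $\tdGa(c)$ once $24 \isdiv c$ is false, and for $\chi_\theta$ there is no level ``on which $\chi_\theta$ trivializes''. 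Since the abelianization of $\Mp{1}(\ZZ)$ is cyclic of order~$24$, generated by the image of $T$, and $\chi_\eta(T) = e(1\slash 24)$, the kernel of $\chi_\eta$ is the commutator subgroup of $\Mp{1}(\ZZ)$, which is a \emph{non-congruence} subgroup: it contains no $\tdGa(N)$ at all. Concretely, because $\eta(24\tau)$ is a weight~$\tfrac12$ form of level~$576$ with character $\left(\frac{12}{\cdot}\right)$ and theta multiplier, one finds for $\ga \in \Ga(24)$ with entries $a,b,c,d$ (principal-branch lift) that $\chi_\eta(\ga) = \left(\frac{c\slash 24}{d}\right)$; taking $\ga = \begin{psmatrix} 1657 & 1008 \\ 120 & 73 \end{psmatrix} \in \Ga(24)$ gives $\chi_\eta(\ga) = \left(\frac{5}{73}\right) = -1$. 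The same mechanism defeats the $\chi_\theta$ case, since $\nu_\theta(\ga) = \left(\frac{c}{d}\right)\epsilon_d^{-1}$ is likewise not a congruence function of the entries.

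The statement you want is nevertheless true, but for a different reason, and supplying that reason is the actual content of the step. With $W = \iotahecke{M}\big(\ga^{-1}T^{a^2 h}\ga T^{-h}\big) = T^w \ga''$, $\ga'' \in \tdGa(c)$, as in your third paragraph, the value $\chi_\eta(\ga'')$ --- equivalently $\chi_\eta(W)$, since $24 \isdiv w$ under your hypotheses --- is a genuine Kronecker symbol: it equals $\left(\frac{-24Mh}{d_W}\right)$, where $d_W = 1 + a^2 h c (hc - d)$ is the lower-right entry of $W$. This symbol does equal $1$, but only because $24M \isdiv c$ forces $d_W \equiv 1$ both modulo~$8$ and modulo the odd part of $24Mh$, whereupon quadratic reciprocity collapses it; no congruence triviality of $\chi_\eta$ is available to quote. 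This reciprocity computation is exactly what the explicit character evaluations in Proposition~3.5 of \cite{raum-2022} and Proposition~3.1 of \cite{raum-2023} carry out, and it is what the paper's one-line proof silently invokes. As written, your argument justifies its critical identity by a false lemma, so it does not yet establish the corollary.
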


\begin{proof}
Without loss of generality, we can assume that~$N$ is such that the conclusion of Theorem~\ref{thm:mf_representation} holds. Now it suffices to notice that for~$\Ga = \tdGa_0(N)$ and~$\chi$ as given, we have
\begin{gather*}
  \tdGa_0(24 M N)
\subseteq
  \Ga^{\prime\Delta}_M(\chi)
\tx{.}
\end{gather*}
\end{proof}

\begin{proof}[Proof of Theorem~\ref{thm:square_classes_rc}]
We let~$V = \CC f \cong \chi$ and~$k$ the weight of~$f$. Consider the homomorphism~$\psi$ from~$\Hecke{M}(\chi)$ to the span of all~$f |_k \delta$ for~$\delta \in \GMp{1}^{(M)}(\ZZ)$. Theorem~\ref{thm:mf_representation} asserts that~$\ell \bbV$ is a representation for~$\Ga'$. Hence so is~$\psi^{-1}(\ell \bbV)$.

Corollary~\ref{cor:rc_characterization} implies that we have~$\hecke{M}(\chi, \beta) \in \psi^{-1}(\ell \bbV)$. Then Proposition~\ref{prop:action_on_hecke_vector_span} shows that we have a unitary projective action of~$\Ga^{\prime\Delta}_M(\chi)$ on the set of all~$\hecke{M}(\chi, a^2 \beta)$, which implies the statement.
\end{proof}

\section{Jacobi forms and Ramanujan-type congruences}%
\label{sec:jacobi_forms}

We now turn to the role that Jacobi forms play in the topic of Ramanujan-type congruences. Our goal is to define a notion of explainable Ramanujan-type congruences based on generalized ranks. This notion formalizes the observations made by Rolen-Tripp-Wagner~\cite{rolen-tripp-wagner-2020-preprint}, who examined colored partition functions.

The general reference on Jacobi forms is the book by Eichler-Zagier~\cite{eichler-zagier-1985}. We adopt and adjust their notation. The real metaplectic Jacobi group is defined as the semi-direct product~$\Mp{1}(\RR) \ltimes \rmH(\RR)$ where~$\rmH(\RR)$ is the ``Heisenberg group'' that appears~\cite{eichler-zagier-1985}. We identify both elements of~$\Mp{1}(\RR)$ and~$\rmH(\RR)$, and even elements of~$\RR^2 \subset \rmH(\RR)$, with the corresponding elements of the real metaplectic Jacobi group. The Jacobi slash action of weight~$k \in \frac{1}{2}\ZZ$ and index~$m \in \CC$ on functions~$\phi \defcol \HS \times \CC \ra \CC$ is given by
\begin{align*}
  \big( \phi \big|_{k,m} (\ga, \om) \big)(\tau,z)
&=
  \omega(\tau)^{2k}\, e\big( m \mfrac{- c z^2}{c \tau + d} \big)\,
  \phi\big( \mfrac{a \tau + b}{c \tau + d},\, \mfrac{z}{c \tau + d} \big)
\tx{,}
\\[0.2\baselineskip]
  \big( \phi \big|_{k,m} (\la,\mu,\ka) \big)(\tau,z)
&=
  e\big( m (\la^2 \tau + 2 \la z + \kappa) \big)\,
  \phi\big( \tau,\, z + \la \tau + \mu \big)
\tx{.}
\end{align*}

Let~$\Ga \subseteq \Mp{1}(\ZZ)$ be a finite index subgroup. A meromorphic function~$\phi \defcol \HS \times \CC \ra \CC$ is a weakly holomorphic Jacobi form of weight~$k$ and index~$m$ for a character~$\chi$ of~$\Ga \ltimes \rmH(\ZZ)$ with singularities at torsion points, if
\begin{enumerateroman}
\item 
there is~$N \in \ZZ_{\ge 1}$ such that~$\phi$ regular outside of the locus~$\tau \in \frac{1}{N}(\ZZ + \tau \ZZ)$,
\item
it transforms like~$\chi$ under the Jacobi slash action of~$\Ga \ltimes \rmH(\ZZ)$, and
\item
for some~$a \in \RR$ satisfies~$|(\phi |_{k,m} (\ga,\om)) (\tau, z)| \ll \exp(a \Im(\tau))$ as~$\Im(\tau) \ra \infty$ for all~$(\ga,\om) \in \Mp{1}(\ZZ)$ and~$z \in \QQ + \ga \tau \QQ$ for which it is regular.
\end{enumerateroman}
We say that $\Ga \ltimes \Lambda \subseteq \Mp{1}(\ZZ) \ltimes \rmH(\ZZ)$ has level~$N$ if~$\Ga \subseteq \Mp{1}(\ZZ)$ has level~$N$ and~$\rmH(N \ZZ) \subseteq \Lambda$. 

Recall that a weakly holomorphic modular form~$f$ has a Ramanujan-type congruence modulo a prime~$\ell$ on~$M \ZZ + \beta$ if
\begin{gather*}
  \forall n \in M \ZZ + \beta
  \quantsep
  c(f; n) \equiv 0
  \;\pmod{\ell}
\tx{.}
\end{gather*}
Ramanujan-type congruences for which~$M = \ell$ are usually referred to as Ramanujan-like. The partitions examined by Rolen-Tripp-Wagner all instances of these.

Given a weakly holomorphic Jacobi form~$\phi$ with singularities at torsion points we say that~$\phi$ specializes (at~$z = 0$) to a weakly holomorphic modular form~$f$ if the leading term in the Laurent expansion for~$\phi$ around~$z = 0$ equals~$f$. As in the work of Rolen-Tripp-Wagner~\cite{rolen-tripp-wagner-2020-preprint}, we want to employ such~$\phi$ to explain Ramanujan-type congruences of~$f$. To achieve this, we need to assume that~$\phi$ has no further singularities in~$( \QQ + \tau \QQ ) \setminus (\ZZ + \tau \ZZ)$, that is, $\phi$ has singularities at most at~$\ZZ + \tau \ZZ$.

\begin{remark}
\label{rm:generalization_to_further_singularities}
It would be interesting to generalize at least parts of the theory that we present in this paper to cases when~$\phi$ has arbitrary singularities on torsion points. Such a generalization will usually lead to mock modular forms~\cite{dabholkar-murthy-zagier-2012-preprint}, whose Ramanujan-type congruences are not yet sufficiently well understood.
\end{remark}

Every weakly holomorphic Jacobi form~$\phi$ with singularities at torsion points admits a Fourier series expansion
\begin{gather}
\label{eq:def:fourier_expansion_jacobi}
  \phi(\tau, z)
=
  \sum_{n, r \in \QQ}
  c(\phi; n, r)\,
  e(n \tau + r z)
\end{gather}
that is valid for positive and sufficiently small~$\Im(z) \slash \Im(\tau)$.

We have~$c(\phi; n, r) = 0$ for sufficiently small~$n$, and if the character~$\chi$ of~$\phi$ has kernel of level~$N$, then the Fourier coefficients of~$\phi$ are supported on~$n, r \in \frac{1}{N}\ZZ$. If~$c(\phi;n,r) \in K$ for all~$n, r$, we say that is has~$\ell$\nbd{}integral Fourier coefficients for a prime ideal~$\ell$ if~$c(\phi; n, r) \in \OKell$. For notational convenience, we set
\begin{gather*}
  c(\phi; n; z)
\defeq
  \sum_{r \in \QQ}
  c(\phi; n, r)\,
  e(r z)
\tx{.}
\end{gather*}

Assume now that~$\phi$ has at most singularities at~$\ZZ + \tau \ZZ$, and let~$-\nu \in \ZZ$ be the order of~$\phi$ at~$z = 0$. Then we define its leading Fourier coefficients (with respect to~$\tau$) via
\begin{gather}
\label{eq:def:leading_fourier_coefficient}
  \tdc(\phi; n; z)\,
\defeq
  \big( e(\tfrac{1}{2} z) - e(- \tfrac{1}{2} z) \big)^\nu\,
  c(\phi; n; z)
\tx{,}
\end{gather}
that is, we have the expansion
\begin{gather*}
  \phi(\tau, z)
\defeqr
  \sum_{n \in \QQ}
  \big( e(\tfrac{1}{2} z) - e(- \tfrac{1}{2} z) \big)^{-\nu}\,
  \tdc(\phi; n; z)\,
  e(n \tau)
\tx{.}
\end{gather*}

Based on the following Proposition~\ref{prop:fourier_coefficients_specialization}, we will always consider~$\tdc(\phi; n; z)$ as a Laurent polynomial in~$e(z \slash N)$, where~$N$ is the level of the character of~$\phi$. In particular, any statement on divisibility of~$\tdc(\phi; n; z)$ will refer to divisibility in the ring~$\CC[e(z \slash N), e(-z)]$.

\begin{proposition}%
\label{prop:fourier_coefficients_specialization}
Suppose that~$\phi$ is as in~\eqref{eq:def:leading_fourier_coefficient}. Then the leading Fourier coefficients~$\tdc(\phi;n;z)$ are Laurent polynomials in~$e(z \slash N)$, where~$N$ is the level of the character of\/~$\phi$.
\end{proposition}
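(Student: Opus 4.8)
The plan is to fix~$n$ and show that~$\tdc(\phi;n;z)$ is an entire, periodic function of~$z$ whose growth as~$\Im(z)\to\pm\infty$ is at most exponential-linear; any such function is a Laurent polynomial in~$e(z/N)$ (after possibly doubling the period when~$N$ is odd), which is the assertion.

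First I would clear the singularity in the $z$-variable. Reading off the Jacobi slash action for the Heisenberg elements~$(0,1,0)$ and~$(1,0,0)$ of~$\rmH(\ZZ)$ gives the quasi-periodicities~$\phi(\tau,z+1)=\chi(0,1,0)\,\phi(\tau,z)$ and~$\phi(\tau,z+\tau)=\chi(1,0,0)\,e(-m(\tau+2z))\,\phi(\tau,z)$, whose multipliers are holomorphic and nowhere vanishing. Hence~$\phi$ has order~$-\nu$ at \emph{every} point of~$\ZZ+\tau\ZZ$, not only at~$z=0$. Since~$e(\tfrac{1}{2}z)-e(-\tfrac{1}{2}z)$ has simple zeros exactly at~$z\in\ZZ$, the function~$g(\tau,z)\defeq(e(\tfrac{1}{2}z)-e(-\tfrac{1}{2}z))^\nu\phi(\tau,z)$ is holomorphic in~$z$ at all integers and retains poles only at the non-integral lattice points; for fixed~$\tau$ these lie on the lines~$\Im(z)\in\Im(\tau)\,\ZZ_{\neq0}$, so~$g(\tau,\cdot)$ is holomorphic on the strip~$|\Im(z)|<\Im(\tau)$. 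By construction the coefficients in its Fourier expansion in~$e(\tau)$ are precisely the~$\tdc(\phi;n;z)$ of~\eqref{eq:def:leading_fourier_coefficient}.

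Next I would promote each coefficient to an entire function of~$z$. For~$z$ in the region of convergence and any height~$Y>|\Im(z)|$ one has~$\tdc(\phi;n;z)=\int_0^1 g(t+iY,z)\,e(-n(t+iY))\,dt$. For fixed~$z$ the poles of~$g$ in~$\tau$ occur only at~$\Im(\tau)=|\Im(z)|/|k|\le|\Im(z)|$ with~$k\neq0$, so the value of the integral is independent of~$Y$; since the integrand is holomorphic in~$z$ throughout~$|\Im(z)|<Y$ and~$Y$ is arbitrary, this continues~$\tdc(\phi;n;z)$ to an entire function of~$z$. Combined with the $N$-periodicity of~$c(\phi;n;z)$ coming from~$r\in\tfrac{1}{N}\ZZ$, this presents~$\tdc(\phi;n;z)$ as a holomorphic function on~$\CC^\times$ in the variable~$e(z/N)$, that is, an a priori two-sided Laurent series.

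Finally I would bound the growth to force this series to terminate. The same integral representation gives~$|\tdc(\phi;n;z)|\le e^{2\pi nY}\sup_{t\in[0,1]}|g(t+iY,z)|$, into which I would insert~$|e(\tfrac{1}{2}z)-e(-\tfrac{1}{2}z)|^\nu\ll e^{\pi\nu|\Im(z)|}$ together with the standard index-$m$ growth estimate~$|\phi(\tau,z)|\ll e^{a\Im(\tau)+b\,\Im(z)^2/\Im(\tau)}$, valid uniformly in the strip away from the poles. Choosing~$Y\asymp|\Im(z)|$ converts the quadratic term~$b\,\Im(z)^2/\Im(\tau)$ into a linear one and yields~$|\tdc(\phi;n;z)|\ll e^{C|\Im(z)|}$ with~$C$ independent of~$z$, i.e.\@ polynomial growth in~$e(z/N)$ and in~$e(-z/N)$ as~$\Im(z)\to+\infty$ and~$\Im(z)\to-\infty$ respectively. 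A function holomorphic on~$\CC^\times$ with polynomial growth at both~$0$ and~$\infty$ is a Laurent polynomial, as claimed. I expect the main obstacle to be exactly this growth step: the weakly-holomorphic growth condition~(iii) is imposed only pointwise at rational~$z$, so deriving the uniform strip estimate for~$\phi$ between its poles—and exploiting the freedom in~$Y$ to linearize the index-$m$ Gaussian factor—is the technical heart of the proof, whereas weak holomorphy in the $n$-direction (namely~$c(\phi;n,r)=0$ for~$n$ below a bound) follows routinely by reading condition~(iii) off the expansion in~$e(\tau)$.
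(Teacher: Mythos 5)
Your first two steps are sound: clearing the integer poles with $(e(\tfrac12 z)-e(-\tfrac12 z))^\nu$ (using the $\rmH(\ZZ)$\nbd{}quasi-periodicity to see that the pole order is $\nu$ at every lattice point), and then continuing each coefficient $\tdc(\phi;n;z)$ to an entire, essentially periodic function of $z$ via the contour integral with movable height $Y$ (modulo the harmless fix that the integration must run over a period of length $N$, since the $\tau$\nbd{}support lies in $\frac1N\ZZ$ rather than $\ZZ$). The genuine gap is the growth estimate, exactly where you yourself locate it, and it cannot be deferred as a technicality: the ``standard index\nbd{}$m$ estimate'' $|\phi(\tau,z)|\ll e^{a\Im(\tau)+b\,\Im(z)^2\slash\Im(\tau)}$, uniform on strips, is not available from the paper's definition. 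Condition~(iii) constrains $\phi$ only at torsion points $z\in\QQ+\ga\tau\QQ$, pointwise, with implied constants that may depend on the point, and the usual proofs of the uniform Gaussian bound for Jacobi forms go through the theta decomposition, equivalently through the finiteness of the $r$\nbd{}support of $c(\phi;n,r)$ for fixed $n$ --- which is precisely the statement being proved. So your argument is circular at its decisive step, or at best reduces the proposition to an unproven estimate of comparable depth. The gap is essential rather than cosmetic: without growth control the conclusion is simply false for the class of functions you have at that stage, since an entire $N$\nbd{}periodic function of $z$ is just a holomorphic function of $e(z\slash N)$ on $\CC^\times$ and may have essential singularities at $0$ and $\infty$ (e.g.\@ $\sum_{r\ge 0}e^{-r^2}e(rz)$), so nothing short of the uniform bound forces the Laurent series to terminate.

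For contrast, the paper avoids analysis entirely and argues algebraically: since $\theta(\tau,z)$ vanishes to first order exactly on $\ZZ+\tau\ZZ$, the function $\psi\defeq\eta^{-3\nu}\theta^\nu\phi$ is a weakly holomorphic Jacobi form that is holomorphic in $z$; for such forms the coefficient functions are Laurent polynomials by the classical structure theory (ellipticity of the Fourier coefficients combined with the lower bound on the $\tau$\nbd{}order --- no analytic estimates are needed), and the triple-product expansion of $\theta^{-\nu}$ transfers this property back to the $\tdc(\phi;n;z)$, with the $\rmH(\ZZ)$\nbd{}transformation invoked at the end to remove a spurious factor of~$2$ from the period. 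If you want to rescue your route, you would need to first establish the uniform strip estimate from condition~(iii), and the natural way to do so is via the theta decomposition of $\psi$ --- at which point you have reproduced the paper's argument.
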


\begin{proof}
The function~$e(z \slash 2) - e(- z \slash 2)$ has a simple zero at~$z = 0$. We have the Jacobi theta function
\begin{gather*}
  \theta(\tau, z)
\defeq
  e(\tfrac{1}{8} \tau)\,
  \big( e(\tfrac{1}{2} z) - e(-\tfrac{1}{2} z) \big)
  \prod_{n = 1}
  \big( 1 - e(n \tau) \big)
  \big( 1 - e(n \tau + z) \big)
  \big( 1 - e(n \tau - z) \big)
\tx{,}
\end{gather*}
which is a Jacobi form of weight and index~$\frac{1}{2}$ that also vanishes to order~$1$ at~$z = 0$. Its residue equals the cube of the Dedekind~$\eta$ function.

Thus we have the weakly holomorphic Jacobi form
\begin{gather*}
  \psi(\tau, z)
\defeq
  \eta^{-3 \nu}
  \theta(\tau, z)^\nu\,
  \phi(\tau, z)
\tx{,}
\end{gather*}
which has order~$0$ at~$z = 0$. The leading Fourier coefficients of~$\psi$ with respect to~$\tau$ coincide with its usual Fourier coefficients, and are thus Laurent polynomials in~$e(z \slash N')$, where~$N'$ is the level of the character of~$\psi$. Note that~$N$ and~$N'$ agree up to a power of~$2$. Now we compare the left and right hand side of
\begin{align*}
&
  \sum_{n \in \QQ}
  \big( e(\tfrac{1}{2} z) - e(- \tfrac{1}{2} z) \big)^{-\nu}\,
  \tdc(\phi; n; z)\,
  e(n \tau)
\\[.2\baselineskip]
={}&
  \phi(\tau, z)
=
  \eta^{3 \nu}
  \theta(\tau, z)^{-\nu}\,
  \psi(\tau, z)
=
  \eta^{3 \nu}
  \theta(\tau, z)^{-\nu}\,
  \sum_{n \in \QQ}
  \tdc(\psi; n; z)\,
  e(n \tau)
\\
={}&
  \Big(
  \prod_{n = 1}
  \big( 1 - e(n \tau) \big)^{2 \nu}
  \big( 1 - e(n \tau + z) \big)^{- \nu}
  \big( 1 - e(n \tau - z) \big)^{- \nu}
  \Big)
\\
&\qquad\qquad
  \sum_{n \in \QQ}
  \big( e(\tfrac{1}{2} z) - e(- \tfrac{1}{2} z) \big)^{-\nu}\,
  \tdc(\psi; n; z)\,
  e(n \tau)
\tx{.}
\end{align*}
When matching Fourier coefficients with respect to~$\tau$, we conclude that all~$\tdc(\phi;n;z)$ are Laurent polynomials in~$e(z \slash \lcm(2,N'))$, since all~$\tdc(\psi; n; z)$ are so. Using that~$\phi$ transforms like~$\chi$ under~$\rmH(\ZZ)$, if~$N$ is odd, we can remove any power of~$2$ from the denominator~$\lcm(2,N')$ in the proof, finishing the argument.
\end{proof}

To clarify the connection between the Fourier coefficients of~$\phi$ and~$f$ and to prepare Definition~\ref{def:explainable_ramanujan_type_congruence}, we offer the next statement.

\begin{proposition}%
\label{prop:cyclotomic_divides_fourier_implies_divisible_by_ell}
Consider a weakly holomorphic Jacobi form~$\phi$ with possible singularities at~$z \in \ZZ + \tau \ZZ$ that specializes to~$f$. Then we have~$c(f;n) = \tdc(\phi;n;0)$.

Further, if the Fourier coefficients of~$\phi$ are~$\ell$\nbd{}integral for a prime~$\ell$ and the~$\ell$\thdash{} cyclotomic polynomial in~$e(z)$ divides~$\tdc(\phi; n; z)$, then~$c(f; n) \equiv 0 \,\pmod{\ell}$.
\end{proposition}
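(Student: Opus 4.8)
The plan is to read both assertions off the defining relation
\begin{gather*}
  \big( e(\tfrac12 z) - e(-\tfrac12 z) \big)^{\nu}\, \phi(\tau,z)
=
  \sum_{n \in \QQ} \tdc(\phi;n;z)\, e(n \tau)
\tx{,}
\end{gather*}
using Proposition~\ref{prop:fourier_coefficients_specialization} to guarantee that each~$\tdc(\phi;n;z)$ is a Laurent polynomial in~$e(z \slash N)$, hence entire in~$z$ and in particular regular at~$z = 0$. For the first claim I would unwind the definition of specialization: since~$e(\tfrac12 z) - e(-\tfrac12 z)$ has a simple zero at~$z = 0$, the left-hand side is regular at~$z = 0$, and by the definition of specialization its value there is~$f$. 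The right-hand side is likewise regular at~$z = 0$, so evaluating there and matching Fourier coefficients in~$\tau$ term by term yields~$c(f; n) = \tdc(\phi; n; 0)$.

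For the second claim I would first reduce, via the first claim, to showing~$\tdc(\phi; n; 0) \in \ell \OKell$. The key numerical observation is that the~$\ell$\thdash{} cyclotomic polynomial~$\Phi_\ell(X) = 1 + X + \dots + X^{\ell - 1}$ satisfies~$\Phi_\ell(1) = \ell$; since~$e(z)$ takes the value~$1$ at~$z = 0$, evaluating the assumed factorization~$\tdc(\phi; n; z) = \Phi_\ell(e(z))\, g(z)$ at~$z = 0$ gives~$\tdc(\phi; n; 0) = \ell\, g(0)$. It therefore suffices to prove that the cofactor~$g$ lies in~$\OKell[e(z \slash N), e(-z)]$: then~$g(0) \in \OKell$, and since the rational prime~$\ell$ lies in the prime ideal~$\ell$, we obtain~$\tdc(\phi; n; 0) \in \ell \OKell$ as required.

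The integrality of~$g$ is the crux of the argument, and I would establish it by a Gauss-type content argument. Write~$\zeta = e(z \slash N)$, so the ambient ring is~$\OKell[\zeta, \zeta^{-1}]$ and~$\OKell$ is a discrete valuation ring with valuation~$\ord_\ell$. Define the content of a nonzero Laurent polynomial as the minimum of~$\ord_\ell$ over its coefficients; reduction modulo the maximal ideal lands in the Laurent polynomial ring over the residue field, which is an integral domain, so the content is additive under multiplication. Now~$\tdc(\phi; n; z)$ has content~$\ge 0$, its coefficients being~$\ZZ$\nbd{}linear combinations of the~$\ell$\nbd{}integral numbers~$c(\phi; n, r)$, whereas~$\Phi_\ell(e(z)) = \sum_{j = 0}^{\ell - 1} \zeta^{j N}$ has a coefficient equal to~$1$ and hence content~$0$. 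Additivity of content then forces~$g$ to have content~$\ge 0$, that is~$g \in \OKell[\zeta, \zeta^{-1}]$, completing the proof. The only delicate point is that we work with Laurent rather than ordinary polynomials; this is harmless, since content and its multiplicativity are unaffected by multiplication by the unit~$\zeta$, so one may clear negative exponents before invoking the classical lemma if desired.
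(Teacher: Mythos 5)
Your proof is correct and takes essentially the same route as the paper's: for the first claim, evaluate the defining expansion at $z = 0$ term by term, using Proposition~\ref{prop:fourier_coefficients_specialization} to know the $\tdc(\phi;n;z)$ are Laurent polynomials; for the second, factor out the cyclotomic polynomial, deduce $\ell$\nbd{}integrality of the cofactor via Gauss's Lemma, and evaluate at $z=0$ where the cyclotomic polynomial takes the value $\ell$. The only difference is presentational: you spell out the content/valuation argument (and the reduction of the Laurent-polynomial case to the polynomial one) that the paper compresses into a bare citation of Gauss's Lemma.
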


\begin{proof}
Since~$e(z \slash 2) - e(-z \slash 2)$ has vanishing order~$1$ and leading Taylor coefficient~$1$ at~$z = 0$, we have
\begin{gather*}
  f(\tau)
=
  \lim_{z \ra 0}
  \big( e(\tfrac{1}{2} z) - e(- \tfrac{1}{2} z) \big)^\nu\,
  \phi(\tau, z)
\tx{.}
\end{gather*}
Next, we insert the expansion of~$\phi$ in~\eqref{eq:def:leading_fourier_coefficient} and obtain that
\begin{align*}
  f(\tau)
={}&
  \lim_{z \ra 0}
  \big( e(\tfrac{1}{2} z) - e(- \tfrac{1}{2} z) \big)^\nu\,
  \sum_{n \in \QQ}
  \big( e(\tfrac{1}{2} z) - e(- \tfrac{1}{2} z) \big)^{-\nu}\,
  \tdc(\phi; n; z)\,
  e(n \tau)
\\
={}&
  \lim_{z \ra 0}
  \sum_{n \in \QQ}
  \tdc(\phi; n; z)\,
  e(n \tau)
=
  \sum_{n \in \QQ}
  \lim_{z \ra 0}
  \tdc(\phi; n; z)\,
  e(n \tau)
  \sum_{n \in \QQ}
  \tdc(\phi; n; 0)\,
  e(n \tau)
\tx{.}
\end{align*}
We can exchange limit and summation, since the latter represents a holomorphic function and is therefore absolutely and locally uniformly convergent. We can perform the limit by inserting~$z$, since~$\tdc(\phi; n; 0)$ is a Laurent polynomial in~$e(z \slash 2)$ by Proposition~\ref{prop:fourier_coefficients_specialization}. This proves the first statement of the proposition when comparing the Fourier expansion of~$f$ against the right hand side.

To finish the proof, observe that if
\begin{gather*}
  \hat{c}(\phi; n; z)
\defeq
  \Big( \sum_{r = 0}^{\ell-1} e(r z) \Big)^{-1}\,
  \tdc(\phi; n; z)
\end{gather*}
is a Laurent polynomial in~$e(z \slash 2)$, by Gau\ss's Lemma it has $\ell$\nbd{}integral coefficients if and only if~$\tdc(\phi; n; z)$ does. In this situation, we conclude that
\begin{gather*}
  c(f; n)
=
  \tdc(\phi; n; 0)
=
  \Big( \sum_{r = 0}^{\ell-1} e(r 0) \Big)\,
  \hat{c}(\phi; n; 0)
=
  \ell\,
  \hat{c}(\phi; n; 0)
\tx{,}
\end{gather*}
where the second factor is~$\ell$\nbd{}integral and therefore~$c(f; n) \equiv 0 \,\pmod{\ell}$ as required.
\end{proof}

Proposition~\ref{prop:cyclotomic_divides_fourier_implies_divisible_by_ell} justifies the next definition.

\begin{definition}%
\label{def:explainable_ramanujan_type_congruence}
Let~$\ell$ be a prime, $M$ a positive integer, and~$\beta \in \QQ$. Further, let~$\phi$ be a weakly holomorphic Jacobi form with possible singularities at~$z \in \ZZ + \tau \ZZ$ that specializes to~$f$. If the Fourier coefficients of~$\phi$ are~$\ell$\nbd{}integral and the~$\ell$\thdash{} cyclotomic polynomial in~$e(z)$ divides~$\tdc(\phi; n; z)$ for all~$n \in M \ZZ + \beta$, we say that~$\phi$ explains the Ramanujan-type congruence modulo~$\ell$ of~$f$ on~$M \ZZ + \beta$.

Without further reference to~$f$, we say that~$\phi$ explains a Ramanujan-type congruence modulo~$\ell$ on~$M \ZZ + \beta$.
\end{definition}

When leveraging the theory of Jacobi forms to study explainable Ramanujan-like congruences, we will use analytic arguments and in particular the vanishing locus of~$\phi$. Towards this, we next reformulate Proposition~\ref{prop:cyclotomic_divides_fourier_implies_divisible_by_ell}, which features an~a~priori algebraic divisibility in~$\CC[e(z \slash N), e(-z)]$.

\begin{lemma}%
\label{la:cyclotomic_divides_equivalent_to_vanishing}
Let~$\ell$ be a prime, and~$\phi$ a weakly holomorphic Jacobi form with possible singularities at~$z \in \ZZ + \tau \ZZ$ that specializes to~$f$. Then the~$\ell$\thdash{} cyclotomic polynomial in~$e(z)$ divides~$\tdc(\phi; n; z)$ if and only if
\begin{gather*}
  c(\phi; n; z) = 0
  \quad\tx{for all\ }
  z \in \mfrac{1}{\ell} \ZZ \setminus \ZZ
\tx{.}
\end{gather*}
\end{lemma}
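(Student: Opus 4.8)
The plan is to reduce the stated equivalence to an elementary fact about a \emph{single-variable} Laurent polynomial. By Proposition~\ref{prop:fourier_coefficients_specialization} the leading Fourier coefficient~$\tdc(\phi;n;z)$ is a Laurent polynomial in~$w \defeq e(z \slash N)$, and, by the convention fixed just before that proposition, the asserted divisibility takes place in~$\CC[e(z \slash N), e(-z)]$. Since~$e(-z) = w^{-N}$ and~$N \ge 1$, the identity~$w^{-1} = w^{N-1}\, w^{-N}$ shows that this ring is simply the full Laurent ring~$\CC[w, w^{-1}]$, a principal ideal domain whose irreducibles are, up to the unit~$w$, the linear polynomials~$w - w_0$ with~$w_0 \in \CC^\times$. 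First I would record this reduction, so that the whole statement becomes a question about roots of a one-variable polynomial.

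Next I would identify the divisor. The~$\ell$\thdash{} cyclotomic polynomial~$\Phi_\ell$ evaluated at~$e(z)$ is~$\Phi_\ell(e(z)) = \sum_{r=0}^{\ell-1} e(r z) = \sum_{r=0}^{\ell-1} w^{r N}$, an honest polynomial in~$w$ of degree~$N(\ell-1)$ with constant term~$\Phi_\ell(0) = 1$. Its roots are the~$w_0$ for which~$w_0^N$ is a primitive~$\ell$\thdash{} root of unity; writing~$w_0 = e(z_0 \slash N)$ this says that~$e(z_0)$ is a primitive~$\ell$\thdash{} root of unity, i.e.~$z_0 \in \tfrac1\ell \ZZ \setminus \ZZ$ (here primality of~$\ell$ enters, so that every nontrivial~$\ell$\thdash{} root of unity is primitive). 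A count of residues modulo~$N\ell$ shows there are exactly~$N\ell - N = N(\ell-1)$ such~$w_0$, matching the degree; hence~$\Phi_\ell(e(z))$ is separable in~$w$, and none of its roots is~$0$.

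With these two observations the equivalence becomes formal. By the factor theorem in~$\CC[w, w^{-1}]$, a factor~$w - w_0$ with~$w_0 \neq 0$ divides a Laurent polynomial precisely when the latter vanishes at~$w_0$; since~$\Phi_\ell(e(z))$ is a unit multiple of the product of its \emph{distinct} linear factors~$w - w_0$, it divides~$\tdc(\phi;n;z)$ if and only if~$\tdc(\phi;n;z)$ vanishes at every root, that is, exactly when~$\tdc(\phi;n;z_0) = 0$ for all~$z_0 \in \tfrac1\ell \ZZ \setminus \ZZ$. Finally I would transfer this from~$\tdc$ to~$c$: by the definition~\eqref{eq:def:leading_fourier_coefficient} we have~$\tdc(\phi;n;z) = \big(e(\tfrac12 z) - e(-\tfrac12 z)\big)^\nu\, c(\phi;n;z)$, and the prefactor~$e(\tfrac12 z) - e(-\tfrac12 z) = 2i\sin(\pi z)$ is nonzero for every~$z \notin \ZZ$, in particular at each~$z_0 \in \tfrac1\ell \ZZ \setminus \ZZ$ (where~$c(\phi;n;z_0)$ is genuinely finite). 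Hence~$\tdc(\phi;n;z_0) = 0 \iff c(\phi;n;z_0) = 0$ at those points, and chaining the two equivalences yields the claim.

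The step I expect to require the most care is the bookkeeping in the second paragraph: pinning down that the root set of the cyclotomic factor, viewed in the variable~$w = e(z \slash N)$ rather than in~$e(z)$, is \emph{exactly} the image of~$\tfrac1\ell \ZZ \setminus \ZZ$ and consists of simple roots. The potential pitfall is that the substitution~$e(z) = w^N$ could a priori introduce multiplicities or a spurious root at~$w = 0$; the degree count~$N(\ell-1)$ together with~$\Phi_\ell(0) = 1$ rules both out, and it is precisely this that makes algebraic divisibility interchangeable with vanishing on~$\tfrac1\ell \ZZ \setminus \ZZ$.
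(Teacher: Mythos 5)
Your proposal is correct and takes essentially the same approach as the paper: relate $c(\phi;n;z)$ and $\tdc(\phi;n;z)$ through the prefactor $e(\tfrac{1}{2}z)-e(-\tfrac{1}{2}z)$, which is nonzero off $\ZZ$, and identify divisibility by the $\ell$\thdash{} cyclotomic polynomial with vanishing of the Laurent polynomial $\tdc(\phi;n;z)$ at the points of $\tfrac{1}{\ell}\ZZ\setminus\ZZ$. The only difference is expository: you spell out the factor-theorem and separability bookkeeping in $\CC[w,w^{-1}]$ (degree count $N(\ell-1)$, no root at $w=0$) that the paper's final sentence leaves implicit.
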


\begin{proof}
By the definition in~\eqref{eq:def:leading_fourier_coefficient} of the leading Fourier coefficients of~$\phi$, for positive but sufficiently small~$\Im(z)$, we have
\begin{gather*}
  c(\phi; n; z)
=
  \big( e(\tfrac{1}{2} z) - e(- \tfrac{1}{2} z) \big)^{-\nu}\,
  \tdc(\phi; n; z)
\tx{.}
\end{gather*}
As a function in~$z$, the first factor on the right hand side is regular and vanishes if and only if~$z \in \ZZ$. As a consequence, $c(\phi; n; z)$ vanishes for~$z \in \frac{1}{\ell} \ZZ \setminus \ZZ$ if and only if~$\tdc(\phi; n; z)$ does. By Proposition~\ref{prop:fourier_coefficients_specialization}, $\tdc(\phi; n; z)$ is a Laurent polynomial in~$e(z \slash N)$. Since the~$\ell$\thdash{} cyclotomic polynomial in~$e(z)$ vanishes exactly for~$z \in \frac{1}{\ell} \ZZ \setminus \ZZ$, we obtain the result.
\end{proof}

Applying Lemma~\ref{la:cyclotomic_divides_equivalent_to_vanishing} coefficient-wise, we obtain a characterization of explainable Ra\-ma\-nu\-jan-type congruence in analytic terms.

\begin{corollary}%
\label{cor:erc_characterization}
Let~$\ell$ be a prime, and~$\phi$ a weakly holomorphic Jacobi form with possible singularities at~$z \in \ZZ + \tau \ZZ$ that specializes to~$f$. Then~$\phi$ explains a Ramanujan-type congruence modulo~$\ell$ of~$f$ on~$M \ZZ + \beta$ if and only if the following function vanishes for all~$z \in \frac{1}{\ell} \ZZ \setminus \ZZ$:
\begin{gather*}
  \sum_{\substack{n \in \QQ\\ n \equiv \beta \pmod{M}}}
  c(\phi; n; z)\,
  e(n \tau)
\tx{.}
\end{gather*}
\end{corollary}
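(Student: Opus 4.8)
The plan is to read off the corollary from Lemma~\ref{la:cyclotomic_divides_equivalent_to_vanishing} by matching Fourier coefficients in~$\tau$, thereby splitting the single analytic condition in the statement into one condition for each~$n \equiv \beta \pmod{M}$. The point is that the sum over the residue class is nothing but the~$\beta \pmod M$ component of the Fourier expansion~\eqref{eq:def:fourier_expansion_jacobi}, and that its coefficients are exactly the~$c(\phi;n;z)$ whose vanishing Lemma~\ref{la:cyclotomic_divides_equivalent_to_vanishing} controls.

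First I would fix~$z_0 \in \frac{1}{\ell}\ZZ \setminus \ZZ$. Since~$z_0$ is a non-integral real constant, the point~$(\tau, z_0)$ avoids the singular locus~$\ZZ + \tau\ZZ$ for every~$\tau \in \HS$, so~$\tau \mapsto \phi(\tau, z_0)$ is holomorphic and~$c(\phi;n;z_0)$ is a finite complex number for each~$n$ by the factorization~$c(\phi;n;z) = (e(\tfrac{1}{2}z) - e(-\tfrac{1}{2}z))^{-\nu}\, \tdc(\phi;n;z)$ together with Proposition~\ref{prop:fourier_coefficients_specialization}. As the exponentials~$\tau \mapsto e(n\tau)$ for distinct~$n \in \QQ$ are linearly independent, the displayed sub-series vanishes identically in~$\tau$ if and only if~$c(\phi;n;z_0) = 0$ for every~$n \equiv \beta \pmod M$. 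Letting~$z_0$ range over~$\frac{1}{\ell}\ZZ \setminus \ZZ$ and swapping the two quantifiers, I would conclude that the condition in the corollary is equivalent to: for each~$n \equiv \beta \pmod M$ one has~$c(\phi;n;z) = 0$ for all~$z \in \frac{1}{\ell}\ZZ \setminus \ZZ$.

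At this point Lemma~\ref{la:cyclotomic_divides_equivalent_to_vanishing} applies to each~$n$ separately and rewrites the latter vanishing as divisibility of~$\tdc(\phi;n;z)$ by the~$\ell$\thdash{} cyclotomic polynomial in~$e(z)$; running this over all~$n \in M\ZZ + \beta$ and comparing with Definition~\ref{def:explainable_ramanujan_type_congruence} --- with the~$\ell$\nbd{}integrality of the Fourier coefficients of~$\phi$ taken as part of the standing meaning of ``explains'' --- gives exactly the claim. I expect the only real care to be needed in the quantifier interchange and in the justification that vanishing of the~$\tau$\nbd{}series forces vanishing of each coefficient; both reduce to the linear independence of the~$e(n\tau)$, so no new analytic input beyond the already established Lemma~\ref{la:cyclotomic_divides_equivalent_to_vanishing} and Proposition~\ref{prop:fourier_coefficients_specialization} is required.
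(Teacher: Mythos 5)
Your proposal is correct and matches the paper's own argument: the paper disposes of this corollary with the single remark that one applies Lemma~\ref{la:cyclotomic_divides_equivalent_to_vanishing} coefficient-wise, which is precisely your reduction via uniqueness of Fourier coefficients in~$\tau$ (after noting that real~$z_0 \in \frac{1}{\ell}\ZZ \setminus \ZZ$ avoids the singular locus) followed by the quantifier interchange and the comparison with Definition~\ref{def:explainable_ramanujan_type_congruence}. Your write-up simply makes explicit the steps the paper leaves implicit; no discrepancy in method or substance.
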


\begin{example}
We consider the Ramanujan-like congruences of~$2$\nbd{}colored partition functions that appear in work of Rolen-Tripp-Wagner~\cite{rolen-tripp-wagner-2020-preprint}. The weakly holomorphic Jacobi form with singularities at torsion points that explains it equals
\begin{gather*}
  \frac{\eta^4(\tau)}{\theta(\tau, 0)\, \theta(\tau, 2z)}
\tx{.}
\end{gather*}
Its singularities lie at~$\frac{1}{2} (2\ZZ + \tau 2 \ZZ) = \ZZ + \tau \ZZ$. Observe that Rolen-Tripp-Wagner include normalizing factors in their definition to make the Fourier indices that appear match the natural combinatorial indexing, while we omit them to preserve modular properties.
\end{example}

We close this section by expressing the Fourier expansion in Corollary~\ref{cor:erc_characterization} in terms of the slash action of~$\GMp{1}^+(\ZZ)$. Observe that we have an action of~$\GL{2}^+(\RR)$ on~$\RR^2$, so that we can extend the metaplectic Jacobi group~$\Mp{1}(\RR) \ltimes \rmH(\RR)$ to similitudes: $\GMp{1}^+(\RR) \ltimes \rmH(\RR)$. Lemma~\ref{la:fourier_expansion_projection_erc} is an analogue of Proposition~2.4 in~\cite{raum-2022} and~\eqref{eq:fourier_expansion_projection_rc} in this work.

\begin{lemma}%
\label{la:fourier_expansion_projection_erc}
Assume that~$\phi$ is a weakly holomorphic Jacobi form with singularities at torsion points. Given~$M \in \ZZ_{\ge 1}$ and $\beta \in \QQ$, assume that the Fourier expansion of~$\phi$ with respect to~$\tau$ is supported on~$\beta + \ZZ$. Then we have the Fourier expansion
\begin{gather}
\label{eq:la:fourier_expansion_projection}
  M^{\frac{k}{2}-1}\,
  \sum_{h \pmod{M}}
  e\big( \mfrac{-\beta h}{M} \big)
  \big(
  \phi \big|_{k,m}\,
  \begin{psmatrix} 1 & h \\ 0 & M \end{psmatrix}
  \big)(\tau, z)
=
  \sum_{\substack{n \in \beta + \ZZ\\ n \equiv \beta \pmod{M}}}
  c\big( \phi; n; \mfrac{z}{M} \big)\,
  e\big( \mfrac{n}{M} \tau \big)
\tx{.}
\end{gather}
In particular, the summation over~$h \,\pmod{M}$ on the left hand side is well-defined.

Further, if~$\mu \in \QQ$ is a regular torsion point of\/~$\phi$, we have
\begin{gather}
\label{eq:la:fourier_expansion_projection:specialization}
  M^{\frac{k}{2}-1}\,
  \sum_{h \pmod{M}}
  e\big( \mfrac{-\beta h}{M} \big)
  \big(
  \phi \big|_{k,m}\,
  (0, \mu)\;
  \big|_{k,m}\,
  \begin{psmatrix} 1 & h \\ 0 & M \end{psmatrix}
  \big)(\tau, 0)
=
  \sum_{\substack{n \in \beta + \ZZ\\ n \equiv \beta \pmod{M}}}
  c\big( \phi; n; \mu \big)\,
  e\big( \mfrac{n}{M} \tau \big)
\tx{.}
\end{gather}
\end{lemma}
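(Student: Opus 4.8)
The plan is to follow the template of~\eqref{eq:fourier_expansion_projection_rc}: evaluate the relevant slash actions explicitly, insert the Fourier series~\eqref{eq:def:fourier_expansion_jacobi}, and then isolate the progression~$n \equiv \beta \pmod M$ by orthogonality of the additive characters~$h \mapsto e(n h / M)$. Writing~$\gamma = \begin{psmatrix} 1 & h \\ 0 & M \end{psmatrix}$ and~$\delta$ for its lift~$\big( \gamma, \sqrt M \big)$, the determinant-normalized slash action on~$\GMp{1}^+(\RR)$ gives, since here~$c = 0$ so that~$c \tau + d = M$ is constant and the Heisenberg exponential~$e\big( m (-cz^2)/(c\tau+d) \big)$ collapses to~$1$,
\begin{gather*}
  \big( \phi \big|_{k,m}\, \delta \big)(\tau, z)
=
  M^{\frac{k}{2}}\, M^{-k}\,
  \phi\big( \mfrac{\tau + h}{M},\, \mfrac{z}{M} \big)
=
  M^{-\frac{k}{2}}\,
  \phi\big( \mfrac{\tau + h}{M},\, \mfrac{z}{M} \big)
\tx{,}
\end{gather*}
the two prefactors being the determinant normalization~$\det(\gamma)^{k/2} = M^{k/2}$ and~$\om(\tau)^{-2k} = (\sqrt M)^{-2k} = M^{-k}$.

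Substituting the Fourier series of~$\phi$ and collecting the outer factor~$M^{\frac k2 - 1} \cdot M^{-\frac k2} = M^{-1}$, the left hand side of~\eqref{eq:la:fourier_expansion_projection} becomes
\begin{gather*}
  M^{-1}
  \sum_{n, r \in \QQ}
  c(\phi; n, r)\,
  e\big( \mfrac{n \tau}{M} \big)\,
  e\big( \mfrac{r z}{M} \big)
  \sum_{h \pmod M}
  e\big( \mfrac{(n - \beta) h}{M} \big)
\tx{.}
\end{gather*}
Because the~$\tau$\nbd{}support of~$\phi$ lies in~$\beta + \ZZ$, the exponent~$n - \beta$ is an integer, so the inner character sum equals~$M$ precisely when~$n \equiv \beta \pmod M$ and vanishes otherwise; this extracts the progression and cancels the~$M^{-1}$. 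Recognising~$\sum_r c(\phi; n, r)\, e(r z / M) = c(\phi; n; z/M)$ then yields the right hand side of~\eqref{eq:la:fourier_expansion_projection}. For the well-definedness of the sum over~$h \pmod M$, I would note that replacing~$h$ by~$h + M$ shifts the inner argument~$\mfrac{\tau + h}{M}$ by~$1$ and hence multiplies~$\phi \big|_{k,m}\, \delta$ by~$e(\beta)$---this is precisely the restatement~$\phi \big|_{k,m}\, T = e(\beta)\, \phi$ of the~$\tau$\nbd{}support condition---while the twist changes by~$e(-\beta(h+M)/M) = e(-\beta)\, e(-\beta h / M)$, so the two factors cancel and the summand is unchanged.

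For the specialization~\eqref{eq:la:fourier_expansion_projection:specialization} the idea is to apply the part already proved to~$\psi \defeq \phi \big|_{k,m}\, (0, \mu)$. Its Heisenberg slash unwinds to~$\psi(\tau, z) = \phi(\tau, z + \mu)$, so~$\psi$ is again a weakly holomorphic Jacobi form with singularities at torsion points, with the same~$\tau$\nbd{}support~$\beta + \ZZ$ and with Fourier coefficients~$c(\psi; n, r) = e(r \mu)\, c(\phi; n, r)$. Evaluating~\eqref{eq:la:fourier_expansion_projection} for~$\psi$ at~$z = 0$ turns its left hand side into that of~\eqref{eq:la:fourier_expansion_projection:specialization} and its right hand side into~$\sum_n c(\psi; n; 0)\, e(n \tau / M)$, and a term-by-term comparison gives~$c(\psi; n; 0) = \sum_r e(r \mu)\, c(\phi; n, r) = c(\phi; n; \mu)$. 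The one step needing genuine care---and the main obstacle---is the legitimacy of setting~$z = 0$ after the shift by~$\mu$: the Fourier series converges only for small positive~$\Im(z) \slash \Im(\tau)$, and the translation by the real number~$\mu$ must not cross a singular locus. This is exactly what the hypothesis that~$\mu$ be a \emph{regular} torsion point guarantees, ensuring that~$c(\phi; n; \mu)$ is a well-defined evaluation and that the exchange of the limit with the locally uniformly convergent summation over~$n$ is justified as in the proof of Proposition~\ref{prop:cyclotomic_divides_fourier_implies_divisible_by_ell}.
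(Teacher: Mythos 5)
Your argument is correct and matches the paper's proof of~\eqref{eq:la:fourier_expansion_projection} essentially verbatim: unwind the slash action of $\big(\begin{psmatrix} 1 & h \\ 0 & M \end{psmatrix},\sqrt{M}\big)$, insert the expansion~\eqref{eq:def:fourier_expansion_jacobi} (which stays valid since $\Im(z/M)\slash\Im((\tau+h)/M) = \Im(z)\slash\Im(\tau)$ remains small and positive), and evaluate the exponential sum over~$h$ using $n-\beta\in\ZZ$. For~\eqref{eq:la:fourier_expansion_projection:specialization} the paper instead commutes the Heisenberg element past the matrix, $(0,\mu)\,\begin{psmatrix} 1 & h \\ 0 & M \end{psmatrix} = \begin{psmatrix} 1 & h \\ 0 & M \end{psmatrix}\,(0, M\mu)$, and evaluates~\eqref{eq:la:fourier_expansion_projection} at $z = M\mu$, but your variant---applying~\eqref{eq:la:fourier_expansion_projection} to the translated form $\phi\big|_{k,m}(0,\mu)$ at $z = 0$---is the same computation in different packaging, and your explicit attention to the legitimacy of specializing~$z$ to a real value (regularity of~$\mu$, with $c(\phi;n;\mu)$ read via Proposition~\ref{prop:fourier_coefficients_specialization}) addresses a point the paper leaves implicit.
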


\begin{proof}
Note that if~$\Im(z) \slash \Im(\tau) > 0$ is sufficiently small, then so is~$\Im(z \slash M) \slash \Im((\tau + h) \slash M) > 0$. We can therefore insert the Fourier expansion~\eqref{eq:def:fourier_expansion_jacobi} in the following calculation:
\begin{align*}
&
  \sum_{h \pmod{M}}
  e\big( \mfrac{-\beta h}{M} \big)
  \big(
  \phi \big|_{k,m}\,
  \begin{psmatrix} 1 & h \\ 0 & M \end{psmatrix}
  \big)(\tau, z)
\\
={}&
  M^{-\frac{k}{2}}\,
  \sum_{h \pmod{M}}
  e\big( \mfrac{-\beta h}{M} \big)
  \phi\big(
  \mfrac{\tau + h}{M}, \mfrac{z}{M}
  \big)
=
  M^{-\frac{k}{2}}\,
  \sum_{n \in \QQ}
  \sum_{h \pmod{M}}
  e\big( \mfrac{h (n - \beta)}{M} \big)
  c\big( \phi; n; \mfrac{z}{M} \big)\,
  e\big( \mfrac{n}{M} \tau \big)
\tx{.}
\end{align*}
Note that~$n - \beta \in \ZZ$ for nonzero Fourier coefficients by our assumptions. Hence when evaluating the exponential sum we obtain the desired condition~$n \equiv \beta \,\pmod{M}$ and an additional factor~$M$, which appears in~\eqref{eq:la:fourier_expansion_projection}, confirming that equality.

To show~\eqref{eq:la:fourier_expansion_projection:specialization}, it now suffices to note that in~$\Mp{1}(\RR) \ltimes \Heis(\RR)$ we have
\begin{gather*}
 (0,\mu)\, \begin{psmatrix} 1 & h \\ 0 & M \end{psmatrix} 
=
 \begin{psmatrix} 1 & h \\ 0 & M \end{psmatrix}\, (0, M \mu)
\tx{,}
\end{gather*}
and that the factor of automorphy contributed by~$(0, M \mu)$ to the slash action is trivial.
\end{proof}

\section{Reformulation into representation theory}%
\label{sec:representation_theory}

In Corollary~\ref{cor:erc_characterization} we arrived at an analytic characterization of Ramanujan-type congruences that are ``explained'' by some fixed weakly holomorphic Jacobi form with singularities at torsion points. The goal of this section is to reformulate it in a purely representation theoretic way, which only refers to~$\phi$ via its character~$\chi$. We subdivide our discussion into three parts.

In Section~\ref{ssec:representation_theory:homomorphisms} we provide a homomorphism between a double-induction and a space of weakly holomorphic modular forms that naturally arises from Lemma~\ref{la:fourier_expansion_projection_erc}. This homomorphism, much like the abstract spaces of weakly holomorphic modular forms from previous work, enables a characterization of explainable Ramanujan-type congruences that we present in Section~\ref{ssec:representation_theory:characterization}. Finally, Section~\ref{ssec:representation_theory:square_classes} provides the representation theoretic argument for our main theorem.

\subsection{Homomorphisms}%
\label{ssec:representation_theory:homomorphisms}

When combining Corollary~\ref{cor:erc_characterization} with Lemma~\ref{la:fourier_expansion_projection_erc}, a representation theoretic perspective naturally leads to the question of how the left hand side of~\eqref{eq:la:fourier_expansion_projection:specialization} behaves under the slash action of the special linear group and its metaplectic cover. To answer this question, we present two lemmas that reveal the relevant representation in steps.

To omit auxiliary assumptions about regularity in these lemmas, we need a modified notion of constant terms with respect to~$z$ at torsion points. Specifically, if~$\phi$ has singularities at torsion points the order~$\ord_{z_0}(\phi)$ of~$\phi$ at~$z_0 = \la \tau + \mu$ is well-defined, and we set
\begin{gather*}
  \const_{z_0}(\phi)(\tau)
\defeq
  \begin{cases}
    \phi(\tau, z_0) \tx{,} & \tx{if\ }\ord_{z_0}(\phi) \ge 0
  \tx{;}
  \\
    0\tx{,} & \tx{if\ }\ord_{z_0}(\phi) < 0
  \tx{.}
  \end{cases}
\end{gather*}
Our first lemma asserts that the modified constant term at~$0$ is a homomorphism of~$\Mp{1}(\RR)$ representations.

\begin{lemma}%
\label{la:modified_constant_term_homomorphism}
Let~$\phi$ be a meromorphic function on~$\HS \times \CC$ with singularities at torsion points. Then for~$k \in \frac{1}{2} \ZZ$ and~$m \in \CC$, we have
\begin{gather*}
  \const_0\big( \phi \big|_{k,m}\, (g,\om) \big)
=
  \const_0(\phi) \big|_k\, (g,\om)
\quad
  \tx{for all\ } (g,\om) \in \GMp{1}^+(\RR)
\tx{.}
\end{gather*}
\end{lemma}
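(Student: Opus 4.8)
The plan is to reduce the statement to a single structural observation: in the Jacobi slash action the variable~$z$ enters only through a nowhere-vanishing regular factor and a linear rescaling, neither of which changes the order of vanishing at~$z = 0$. Writing~$\psi \defeq \phi \big|_{k,m} (g,\om)$ with~$g = \begin{psmatrix} a & b \\ c & d \end{psmatrix}$ and~$\det(g) = M$, the extension of the Jacobi slash to~$\GMp{1}^+(\RR)$ has the shape
\begin{gather*}
  \psi(\tau, z)
=
  A(\tau)\,
  e\big( m \mfrac{-c z^2}{c\tau + d} \big)\,
  \phi\big( g\tau,\, \mfrac{z}{c\tau + d} \big)
\tx{,}
\end{gather*}
where~$A(\tau) = \det(g)^{\frac{k}{2}}\, \om(\tau)^{-2k}$ is the~$z$\nbd{}independent automorphy factor that by construction coincides with the factor defining the modular slash action~$f \mapsto f \big|_k (g,\om)$. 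First I would record the order identity~$\ord_0(\psi) = \ord_0(\phi)$: the factor~$A(\tau)$ does not involve~$z$; the exponential~$e(m \tfrac{-c z^2}{c\tau + d})$ is entire and nonvanishing in~$z$, hence of order~$0$ at~$z = 0$; and for fixed~$\tau \in \HS$ the map~$z \mapsto z \slash (c\tau + d)$ is a linear automorphism fixing~$0$, so it preserves orders there. Since~$g$ acts biholomorphically on~$\HS$, the point~$g\tau$ is generic whenever~$\tau$ is, so the order at~$z = 0$ of~$z \mapsto \phi\big(g\tau,\, z\slash(c\tau+d)\big)$ equals the common generic value~$\ord_0(\phi)$, yielding the claim.

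With this in hand, the lemma follows by distinguishing the two branches in the definition of~$\const_0$. If~$\ord_0(\phi) \ge 0$, then also~$\ord_0(\psi) \ge 0$, so both modified constant terms are honest evaluations at~$z = 0$; setting~$z = 0$ makes the exponential equal~$1$ and the second argument of~$\phi$ equal~$0$, whence
\begin{gather*}
  \const_0(\psi)(\tau)
=
  \psi(\tau, 0)
=
  A(\tau)\, \phi(g\tau, 0)
=
  A(\tau)\, \const_0(\phi)(g\tau)
=
  \big( \const_0(\phi) \big|_k (g,\om) \big)(\tau)
\tx{,}
\end{gather*}
the last step being exactly the definition of the modular slash action with automorphy factor~$A$. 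If instead~$\ord_0(\phi) < 0$, then~$\ord_0(\psi) < 0$ as well, so by definition~$\const_0(\psi) = 0$ and~$\const_0(\phi) = 0$, and the asserted identity reads~$0 = 0$. Combining the two cases gives the lemma for all~$(g,\om) \in \GMp{1}^+(\RR)$.

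The only genuinely delicate point is the order identity~$\ord_0(\psi) = \ord_0(\phi)$, which is what allows the modified constant term to interact cleanly with the slash action even when~$\phi$ has a pole at~$z = 0$. It rests on the hypothesis that the singularities of~$\phi$ lie at torsion points, so that~$\ord_{z_0}(\phi)$ is well-defined as a~$\tau$\nbd{}independent quantity, together with the observation that~$z \mapsto z \slash (c\tau + d)$ fixes the torsion point~$0$ and acts as a local biholomorphism there. Everything else is the routine substitution~$z = 0$ into the slash formula.
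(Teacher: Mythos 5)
Your proof is correct and follows essentially the same route as the paper's: the paper likewise notes that the order of~$\phi$ at~$z = 0$ is unchanged by the slash action and that the automorphy factor $\det(g)^{\frac{k}{2}}\om(\tau)^{-2k}e\big(m\tfrac{-cz^2}{c\tau+d}\big)$ restricts at~$z=0$ to the weight-$k$ modular factor $\det(g)^{\frac{k}{2}}\om(\tau)^{-2k}$. You simply spell out in more detail what the paper leaves implicit (the order-preservation argument and the case distinction $\ord_0(\phi)\ge 0$ versus $\ord_0(\phi)<0$), which is a faithful elaboration rather than a different method.
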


\begin{proof}
Note that the order of~$\phi$ at~$0$ does not change when applying~$(g,\om)$. It thus suffices to note that the factors of automorphy associated with~$(g,\om)$ with~$g = \begin{psmatrix} a & b \\ c & d \end{psmatrix}$ on the left and right hand side relate in the following way:
\begin{gather*}
  \det(g)^{\frac{k}{2}}
  \om(\tau)^{-2 k}
  e\big( m \mfrac{- c z^2}{c \tau + d} \big)
  \Big|_{z = 0}
=
  \det(g)^{\frac{k}{2}}
  \om(\tau)^{-2 k}
\tx{.}
\end{gather*}
\end{proof}

We next accommodate the torsion points~$(0,\mu)$ that appear in~\eqref{eq:la:fourier_expansion_projection:specialization}. The case that is relevant in the context of Corollary~\ref{cor:erc_characterization} is~$\mu \in \frac{1}{\ell} \ZZ \setminus \ZZ$. We restrict ourselves to this for simplicity.

\begin{lemma}%
\label{la:torsion_point_representation}
Suppose that~$V$ is a space of meromorphic functions on~$\HS \times \CC$ that is stable under the weight~$k \in \frac{1}{2}$ and index~$m \in \CC$ slash action of\/~$\Ga \ltimes \Heis(\ZZ)$ for a subgroup~$\Ga \subseteq \Mp{1}(\ZZ)$. Then given a prime~$\ell$, we have the homomorphism of\/~$\Ga$\nbd{}representations
\begin{gather}
\label{eq:la:torsion_point_representation}
\begin{aligned}
  \Ind_{\Ga \cap \tdGa_1(\ell)}^\Ga\,
  \iota^\ast(V)
&\lra
  \linspan_\CC\big\{
  \phi |_{k,m}\, (\la,\mu)
  \condcol
  \phi \in V,\ 
  (\la,\mu) \in \tfrac{1}{\ell} \ZZ^2 \setminus \ZZ^2
  \big\}
\tx{,}
\\ 
  \phi \otimes (\ga,\om)
&\lmto
  \phi \big|_{k,m}\, \big( 0, \tfrac{1}{\ell} \big) (\ga,\om)
\tx{,}
\end{aligned}
\end{gather}
where~$\iota^\ast(V)$ is the pullback of\/~$V$ along the embedding
\begin{gather*}
  \iota
=
  \iotator{\ell}
\defcol
  \tdGa_1(\ell)
\lhra
  \tdGa_1(\ell) \ltimes \Heis(\ZZ)
\tx{,}\quad
  (\ga,\om)
\lmto
  \big( (\ga, \om), \tfrac{c}{\ell}, \tfrac{d-1}{\ell}, 0 \big)
\quad\tx{with\ }
  \ga
=
  \begin{psmatrix} a & b \\ c & d \end{psmatrix}
\tx{.}
\end{gather*}
\end{lemma}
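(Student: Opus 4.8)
The plan is to follow the template of the proof of Lemma~\ref{la:hecke_representation} and appeal to the universal property of the induced representation. Write $H = \Ga\cap\tdGa_1(\ell)$ and regard $W = \iota^\ast(V)$ as a right $\CC[H]$\nbd{}module, so that $\Ind_H^\Ga W = W\otimes_{\CC[H]}\CC[\Ga]$ is generated by the simple tensors $\phi\otimes(\ga,\om)$ subject only to the relation $\big(\phi\big|_{k,m}\iota(h)\big)\otimes(\ga,\om) = \phi\otimes h(\ga,\om)$ for $h\in H$. To produce the asserted map it therefore suffices to define it on generators by $\phi\otimes(\ga,\om)\mapsto\phi\big|_{k,m}(0,\tfrac1\ell)(\ga,\om)$ and to check that this assignment respects the displayed relation. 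Since the slash action is a right action, this reduces to the single group identity
\begin{gather*}
  \iota(h)\,(0,\tfrac1\ell)
=
  (0,\tfrac1\ell)\,h
\quad\text{in\ } \Mp{1}(\RR)\ltimes\Heis(\RR),
\qquad h\in H.
\end{gather*}
Right $\Ga$\nbd{}equivariance then holds automatically, as the construction sends the right translate by $(\ga',\om')$ of $\phi\otimes(\ga,\om)$ to the slash by $(\ga',\om')$ of its image.

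To verify the group identity I would compute the conjugate $(\ga,\om)^{-1}(0,\tfrac1\ell)(\ga,\om)$ directly in $\Mp{1}(\RR)\ltimes\Heis(\RR)$ from the two slash formulas. For $\ga = \begin{psmatrix} a & b \\ c & d \end{psmatrix}$ the $\RR^2$\nbd{}component transforms linearly into $(0,\tfrac1\ell)\ga = (\tfrac c\ell,\tfrac d\ell)$, while the factor of automorphy $e\big(m(-cz^2/(c\tau+d))\big)$ attached to $(\ga,\om)$ contributes a central term. The congruence conditions $c\equiv0$ and $d\equiv1\pmod{\ell}$ defining $\tdGa_1(\ell)$ ensure that $\tfrac c\ell,\tfrac{d-1}\ell\in\ZZ$, so that $\iota$ genuinely lands in $\tdGa_1(\ell)\ltimes\Heis(\ZZ)$, and they are also what is needed to match the two sides: the Heisenberg component $(\tfrac c\ell,\tfrac{d-1}\ell,0)$ recorded by $\iota$ is the one that, after transporting $(0,\tfrac1\ell)$ across $(\ga,\om)$, reproduces $(\tfrac c\ell,\tfrac d\ell)$ together with a central element of $\Heis(\ZZ)$.

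Finally, to see that the image lies in the claimed span I would rewrite $\phi\big|_{k,m}(0,\tfrac1\ell)(\ga,\om) = \big(\phi\big|_{k,m}(\ga,\om)\big)\big|_{k,m}\big((\ga,\om)^{-1}(0,\tfrac1\ell)(\ga,\om)\big)$. Here $\phi\big|_{k,m}(\ga,\om)\in V$ because $V$ is stable under $\Ga$, and the $\RR^2$\nbd{}part of the conjugate equals $(0,\tfrac1\ell)\ga = (\tfrac c\ell,\tfrac d\ell)\in\tfrac1\ell\ZZ^2$, which fails to lie in $\ZZ^2$ since the second row $(c,d)$ of $\ga\in\SL{2}(\ZZ)$ is primitive; thus the image is of the form $\psi\big|_{k,m}(\la,\mu)$ with $\psi\in V$ and $(\la,\mu)\in\tfrac1\ell\ZZ^2\setminus\ZZ^2$, as required. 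The main obstacle is the central bookkeeping in this computation: one must track the $e(m\kappa)$ factors of automorphy arising from the Heisenberg cocycle together with the half-integral metaplectic weight, and confirm that the residual central contribution is absorbed by $\Heis(\ZZ)$ and acts trivially. This is the analogue, now complicated by the noncommutative Heisenberg factor, of the determinant-and-automorphy identity that concludes the proof of Lemma~\ref{la:hecke_representation}.
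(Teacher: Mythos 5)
Your proposal is correct and follows essentially the same route as the paper's own proof: the paper likewise reduces well-definedness and $\Ga$\nbd{}equivariance of the map on the induced module to the single commutation identity $(0,\tfrac1\ell)\,(\ga,\om) = \iota(\ga,\om)\,(0,\tfrac1\ell)$ in the Jacobi group, checked by multiplying out the group elements, and notes that $(0,\tfrac1\ell)\ga \in \tfrac1\ell\ZZ^2\setminus\ZZ^2$ to place the image in the stated span. Your write-up even supplies two details the paper leaves implicit, namely the primitivity of the bottom row $(c,d)$ as the reason the translate avoids $\ZZ^2$, and the explicit bookkeeping of the central $e(m\kappa)$ factors in the verification of the commutation identity.
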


\begin{proof}
Note that~$(0, \frac{1}{\ell}) \ga \in \tfrac{1}{\ell} \ZZ^2 \setminus \ZZ^2$ for all~$\ga \in \SL{2}(\ZZ)$. This shows that the image of~$\phi \otimes (\ga,\om)$ given in~\eqref{eq:la:torsion_point_representation} lies in the stated co-domain. To verify that the map is well-defined and yields a homomorphism note that for~$(\ga,\om) \in \Ga \cap \tdGa_1(\ell)$, using notation for~$\ga$ as in the statement, we have 
\begin{gather*}
  \phi \big|_{k,m}\, (0, \tfrac{1}{\ell}) (\ga,\om)
=
  \Big(
  \phi
  \big|_{k,m}\,
  \big( (\ga, \om), \mfrac{c}{\ell}, \mfrac{d-1}{\ell}, 0 \big)
  \Big) \Big|_{k,m}\, (0, \tfrac{1}{\ell})
\tx{,}
\end{gather*}
which follows when multiplying out the group elements on the left and right hand side.
\end{proof}

Let~$M$ be a positive integer and~$\ell$ be a prime. We consider a character~$\chi$ of~$\Ga \ltimes \Heis(\ZZ)$ for subgroup~$\Ga \subseteq \Mp{1}(\ZZ)$. Combining the previous two lemmas with Lemma~\ref{la:hecke_representation}, which provides an understanding of the action of~$\GMp{1}^{(M)}(\ZZ)$, we obtain in the next statement a homomorphism from
\begin{gather}
\label{eq:def:hecke_torsion_representation}
  \HeckeTor{M,\ell}(\chi)
\defeq
  \Ind_{\delta_M^{-1} \Ga \delta_M \cap \Mp{1}(\ZZ)}^{\Mp{1}(\ZZ)}\,
  \iotahecke{M}^\ast\Big(
  \Ind_{\Ga \cap \tdGa_1(\ell)}^{\Ga}\,
  \iotator{\ell}^\ast(\chi)
  \Big)
\tx{,}
\end{gather}
where the maps~$\iotahecke{M}$ and~$\iotator{\ell}$ are the embeddings given in Lemmas~\ref{la:hecke_representation} and~\ref{la:torsion_point_representation}.

\begin{proposition}
\label{prop:hecke_torsion_representation}
Let~$\phi$ be a weakly holomorphic Jacobi form with singularities at torsion points of weight~$k \in \frac{1}{2} \ZZ$ and index~$m \in \CC$ for a character~$\chi$ of\/~$\Ga \ltimes \Heis(\ZZ)$ with~$\Ga \subseteq \Mp{1}(\ZZ)$. Then given a positive integer~$M$ and a prime~$\ell$, we have the homomorphism of\/~$\Ga$\nbd{}representations
\begin{gather}
\label{eq:prop:hecke_torsion_representation}
\begin{aligned}
  \HeckeTor{M,\ell}(\chi)
&\lra
  \linspan_\CC\Big\{
  \const_{0} \big(
  \phi |_{k,m}\, (\la, \mu)
  \big)
  \big|_{k,m}\, \delta 
  \condsep
  \la, \mu \in \tfrac{1}{\ell} \ZZ,\ 
  \delta \in \GMp{1}^{(M)}(\ZZ)
  \Big\}
\tx{,}
\\
  (\ga',\om') \otimes (\ga,\om)
&\lmto
  \const_0\Big(
  \phi
  \big|_{k,m}\, \big(0, \tfrac{1}{\ell}\big) (\ga',\om')
  \Big)
  \Big|_{k,m}\, \begin{psmatrix} 1 & 0 \\ 0 & M \end{psmatrix} (\ga,\om)
\tx{.}
\end{aligned}
\end{gather}
\end{proposition}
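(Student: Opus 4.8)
The plan is to realise the homomorphism~\eqref{eq:prop:hecke_torsion_representation} as a composition of the three preceding lemmas, glued together by functoriality of pullback and induction. Throughout I identify the abstract character~$\chi$ with the one-dimensional space~$\CC\phi$, on which~$\Ga \ltimes \Heis(\ZZ)$ acts by~$\chi$ via the weight-$k$, index-$m$ Jacobi slash. This yields~$\iotator{\ell}^\ast(\chi) \cong \iotator{\ell}^\ast(\CC\phi)$ and, after the outer pullback, an analogous identification, so that the concrete realisations furnished by Lemmas~\ref{la:torsion_point_representation} and~\ref{la:hecke_representation} become available on the nose.

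First I would apply Lemma~\ref{la:torsion_point_representation} to~$V = \CC\phi$, obtaining the $\Ga$-homomorphism from~$\Ind_{\Ga \cap \tdGa_1(\ell)}^\Ga \iotator{\ell}^\ast(\chi)$ to~$\linspan_\CC\{\phi|_{k,m}(\la,\mu)\}$ that sends~$\phi \otimes (\ga',\om') \mapsto \phi|_{k,m}(0,\tfrac1\ell)(\ga',\om')$. Post-composing with~$\const_0$, which by Lemma~\ref{la:modified_constant_term_homomorphism} intertwines the index-$m$ Jacobi slash with the ordinary weight-$k$ slash for \emph{all} of~$\GMp{1}^+(\RR)$, produces a $\Ga$-homomorphism
\[
  W \defeq \Ind_{\Ga \cap \tdGa_1(\ell)}^\Ga \iotator{\ell}^\ast(\chi) \lra V' \defeq \linspan_\CC\big\{ \const_0(\phi|_{k,m}(\la,\mu)) \big\}.
\]
Here one checks that~$V'$ is a space of functions on~$\HS$ stable under the weight-$k$ slash of~$\Ga$, as required by Lemma~\ref{la:hecke_representation}: for~$\ga \in \Ga$, commuting~$\ga$ to the left gives~$(\la,\mu)\ga = \ga(\la',\mu')$ with~$(\la',\mu') \in \tfrac1\ell\ZZ^2$, and then~$\const_0(\phi|_{k,m}(\la,\mu))|_k\ga = \const_0(\phi|_{k,m}(\la,\mu)|_{k,m}\ga) = \chi(\ga)\,\const_0(\phi|_{k,m}(\la',\mu')) \in V'$, using Lemma~\ref{la:modified_constant_term_homomorphism} together with~$\phi|_{k,m}\ga = \chi(\ga)\phi$.

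Second I would push the $\Ga$-homomorphism~$W \to V'$ through the outer construction. Pulling back along~$\iotahecke{M}$ and then inducing to~$\Mp{1}(\ZZ)$ is functorial, so it carries~$\HeckeTor{M,\ell}(\chi) = \Ind_{\delta_M^{-1}\Ga\delta_M \cap \Mp{1}(\ZZ)}^{\Mp{1}(\ZZ)} \iotahecke{M}^\ast(W)$ to~$\Hecke{M}(V')$ by~$(\phi \otimes (\ga',\om')) \otimes (\ga,\om) \mapsto \const_0(\phi|_{k,m}(0,\tfrac1\ell)(\ga',\om')) \otimes (\ga,\om)$. Composing with the realisation of Lemma~\ref{la:hecke_representation} for~$V'$, namely~$g \otimes (\ga,\om) \mapsto g|_k \delta_M(\ga,\om)$, yields exactly the map~\eqref{eq:prop:hecke_torsion_representation}; the Jacobi slash~$|_{k,m}\delta_M(\ga,\om)$ printed there agrees with the ordinary slash~$|_k\delta_M(\ga,\om)$ on the~$\tau$-only function~$\const_0(\phi|_{k,m}(\la,\mu))$ by Lemma~\ref{la:modified_constant_term_homomorphism} (equivalently, because~$\delta_M$ is upper triangular, so contributes no index factor). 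Since each constituent is a homomorphism, so is the composition, and in fact it is~$\Mp{1}(\ZZ)$-equivariant, which in particular gives the asserted~$\Ga$-equivariance; that the image lands in the stated span follows by writing~$(0,\tfrac1\ell)(\ga',\om') = (\ga',\om')(\la,\mu)$ with~$(\la,\mu) \in \tfrac1\ell\ZZ^2$ and noting~$\delta_M(\ga,\om) \in \GMp{1}^{(M)}(\ZZ)$.

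The main obstacle is bookkeeping rather than mathematics. The one genuine point requiring care is the legitimacy of the outer pullback: the induction subgroup~$\delta_M^{-1}\Ga\delta_M \cap \Mp{1}(\ZZ)$ must be carried into~$\Ga$ by~$\iotahecke{M}$ so that~$\iotahecke{M}^\ast(W)$ is defined. This holds because~$\iotahecke{M}$ acts as the conjugation~$\ga \mapsto \delta_M \ga \delta_M^{-1}$ on~$\tdGa^0(M)$, whence each~$\delta_M^{-1}\ga_0\delta_M$ in the subgroup is sent back to~$\ga_0 \in \Ga$. Beyond that, the only work is tracing the double tensor~$(\ga',\om') \otimes (\ga,\om)$ through the functorial pullback-and-induce to confirm it matches the stated formula, precisely as in the well-definedness verifications of Lemmas~\ref{la:hecke_representation} and~\ref{la:torsion_point_representation}, where one multiplies out the relevant group elements on either side.
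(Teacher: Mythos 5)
Your proposal is correct and takes essentially the same route as the paper, whose entire proof is the single sentence that the statement ``follows when combining Lemmas~\ref{la:hecke_representation} and~\ref{la:torsion_point_representation} with Lemma~\ref{la:modified_constant_term_homomorphism}.'' Your write-up simply makes explicit the functoriality, stability of the intermediate space~$V'$, and well-definedness bookkeeping that the paper leaves implicit.
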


\begin{proof}
This follows when combining Lemmas~\ref{la:hecke_representation} and~\ref{la:torsion_point_representation} with Lemma~\ref{la:modified_constant_term_homomorphism}.
\end{proof}

\subsection{Characterization of explainable Ramanujan-type congruences}%
\label{ssec:representation_theory:characterization}

We note that we up till this point did not need to make any assumptions on~$\Ga \subseteq \Mp{1}(\ZZ)$ nor the character~$\chi$. We next need to provide vectors in~$\HeckeTor{M,\ell}(\chi)$ that correspond to Ramanujan-type congruence via the formula in Lemma~\ref{la:fourier_expansion_projection_erc}. To this end, we need to ensure that~$\HeckeTor{M,\ell}(\chi)$ is sufficiently large.

Let\/~$\Ga \subseteq \Mp{1}(\ZZ)$ be a subgroup. In Section~\ref{sec:ramanujan_type}, we already used that without loss of generality~$T \in \Ga$ if the Fourier expansion of~$f$ is supported on~$\beta + \ZZ$ for some~$\beta \in \QQ$. This is sufficient to conclude that 
\begin{gather}
\label{eq:la:subgroup_index_large_hecke_torsion_representation:hecke}
  \begin{psmatrix} 1 & \ZZ \\ 0 & M \end{psmatrix}
\subseteq
  \begin{psmatrix} 1 & 0 \\ 0 & M \end{psmatrix}
  \Ga
\tx{,}
\end{gather}
which ensures that~$\Hecke{M}(\chi)$ is sufficiently large to contain the vectors~$\hecke{M}(\chi; \beta)$. The next lemma provides an analogous sufficient criteria for the representation in Lemma~\ref{la:torsion_point_representation}.

\begin{lemma}%
\label{la:subgroup_index_gives_large_hecke_torsion_representation}
Given a prime~$\ell$, suppose that the index of\/~$\Ga \cap \tdGa_1(\ell)$ in~$\Ga \cap \tdGa_0(\ell)$ equals~$\ell - 1$. Then we have
\begin{gather}
\label{eq:la:subgroup_index_large_hecke_torsion_representation:torsion}
  \big( 0, \tfrac{1}{\ell} \ZZ \setminus \ZZ \big)
\subseteq
  \ZZ^2 + \big( 0, \tfrac{1}{\ell} \big) \Ga
\tx{.}
\end{gather}
This assumption is satisfied if\/~$\Ga$ is a level~$N$ congruence subgroup with~$\gcd(\ell, N) = 1$.
\end{lemma}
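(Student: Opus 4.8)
The plan is to translate the asserted set inclusion into a surjectivity statement for the reduction map $\ga \mapsto d \bmod \ell$ and then read off that surjectivity from the index hypothesis. First I would make the right action of $\Ga$ on $\RR^2 \subset \Heis(\RR)$ explicit: writing $\ga = \begin{psmatrix} a & b \\ c & d \end{psmatrix}$, one has $(0, \tfrac{1}{\ell})\ga = (\tfrac{c}{\ell}, \tfrac{d}{\ell})$, consistent with the Heisenberg component $(\tfrac{c}{\ell}, \tfrac{d-1}{\ell}, 0)$ in the embedding $\iotator{\ell}$ of Lemma~\ref{la:torsion_point_representation}, since $(\tfrac{c}{\ell}, \tfrac{d-1}{\ell}) + (0, \tfrac{1}{\ell}) = (\tfrac{c}{\ell}, \tfrac{d}{\ell})$. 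Reducing modulo $\ZZ^2$, the point $(0, \tfrac{j}{\ell})$ with $\ell \nmid j$ lies in $\ZZ^2 + (0, \tfrac{1}{\ell})\Ga$ precisely when there is $\ga \in \Ga$ with $\tfrac{c}{\ell} \in \ZZ$ and $\tfrac{d}{\ell} \equiv \tfrac{j}{\ell} \pmod 1$, that is, $\ga \in \Ga \cap \tdGa_0(\ell)$ with $d \equiv j \pmod \ell$. Hence the inclusion~\eqref{eq:la:subgroup_index_large_hecke_torsion_representation:torsion} is equivalent to surjectivity of the map $\Ga \cap \tdGa_0(\ell) \to (\ZZ/\ell)^\times$, $\ga \mapsto d \bmod \ell$, where the image indeed lands in the units because $ad \equiv 1 \pmod \ell$ once $\ell \mid c$.

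Next I would identify the kernel and deduce surjectivity from the index hypothesis. The kernel of $\ga \mapsto d \bmod \ell$ on $\Ga \cap \tdGa_0(\ell)$ consists of the $\ga$ with $\ell \mid c$ and $d \equiv 1 \pmod \ell$; since $\det \ga = 1$ forces $a \equiv 1 \pmod \ell$ as well, this kernel is exactly $\Ga \cap \tdGa_1(\ell)$ (the central kernel $\{\pm 1\}$ of $\Mp{1}(\ZZ) \to \SL{2}(\ZZ)$ lies in $\tdGa_1(\ell)$, so passing between $\SL{2}(\ZZ)$ and its metaplectic cover is harmless here). Thus the map factors as an injection $(\Ga \cap \tdGa_0(\ell))/(\Ga \cap \tdGa_1(\ell)) \hookrightarrow (\ZZ/\ell)^\times$; in particular the index is always at most $\ell - 1$, and the hypothesis that it equals $\ell - 1 = \#(\ZZ/\ell)^\times$ forces this injection to be a bijection, so the reduction map is onto, which is exactly what the inclusion requires.

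For the final assertion I would produce the needed elements directly by strong approximation. Given $j \in (\ZZ/\ell)^\times$, the coprimality $\gcd(N,\ell) = 1$ yields $\ZZ/N\ell \cong \ZZ/N \times \ZZ/\ell$, and together with surjectivity of $\SL{2}(\ZZ) \to \SL{2}(\ZZ/N\ell)$ this lets me choose $\ga \in \SL{2}(\ZZ)$ with $\ga \equiv I \pmod N$ and $\ga \equiv \begin{psmatrix} j^{-1} & 0 \\ 0 & j \end{psmatrix} \pmod \ell$. The first congruence places $\ga$ in $\tdGa(N) \subseteq \Ga$ (any metaplectic lift again lies in $\tdGa(N)$, as that is the full preimage of $\Ga(N)$), while the second gives $\ell \mid c$ and $d \equiv j \pmod \ell$, so $\ga \in \Ga \cap \tdGa_0(\ell)$ realizes the value $j$. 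This shows the reduction map is onto, hence its kernel $\Ga \cap \tdGa_1(\ell)$ has index exactly $\ell - 1$, verifying the hypothesis.

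The mathematical content here is light; the only steps demanding care are the bookkeeping of conventions (the right action on $\RR^2$ and the metaplectic $\{\pm 1\}$ ambiguity) and the clean invocation of strong approximation for $\SL{2}$ with coprime moduli. Once these are pinned down, the argument is essentially a dictionary translation between the geometric inclusion and the structure of $\Ga_0(\ell)/\Ga_1(\ell) \cong (\ZZ/\ell)^\times$, so I expect the convention-tracking to be the main, though minor, obstacle.
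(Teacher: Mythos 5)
Your proof is correct and is in substance the paper's own argument: your reduction map $\ga \mapsto d \bmod \ell$ with kernel $\Ga \cap \tdGa_1(\ell)$ is exactly the orbit--stabilizer computation the paper invokes (the orbit of $(0,\tfrac{1}{\ell})$ modulo $\ZZ^2$ versus the stabilizer $\tdGa_1(\ell)$ inside $\tdGa_0(\ell)$), and your final step is the paper's Chinese Remainder Theorem argument with the underlying surjectivity of $\SL{2}(\ZZ) \to \SL{2}(\ZZ\slash N\ell\ZZ)$ made explicit.
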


\begin{proof}
It suffices to calculate in~$\SL{2}(\ZZ)$. Employing the Orbit Theorem, we deduce the first statement from the observation that
\begin{gather*}
  \big( (0, \tfrac{1}{\ell} \ZZ) \setminus (0,\ZZ) \big) \big\slash (0,\ZZ)
\end{gather*}
has size~$\ell - 1$ and that the stabilizer of~$(0, \frac{1}{\ell})$ in~$\Ga_0(\ell)$ equals~$\Ga_1(\ell)$. The assertion on congruence subgroups follows from the Chinese Remainder Theorem.
\end{proof}

We can now provide the connection between the Fourier expansion in~\eqref{eq:la:fourier_expansion_projection:specialization}
and the representation in Proposition~\ref{prop:hecke_torsion_representation}. In analogy with the definition of~$\hecke{M}(\chi; \beta)$ in~\eqref{eq:def:hecke_vector}, we assume that~$T \in \Ga$, $\chi(T) = e(\beta)$, and that~\eqref{eq:la:subgroup_index_large_hecke_torsion_representation:torsion} holds. Then for~$\mu \in \frac{1}{\ell} \ZZ \setminus \ZZ$, we set
\begin{gather}
\label{eq:def:hecke_torsion_vector}
  \hecketor{M,\ell}(\chi; \beta, \mu)
\defeq
  \ga_\mu\,
  M^{1 - \frac{k}{2}}\mspace{-12mu}
  \sum_{h \pmod{M}}
  e\big( \mfrac{- h \beta}{M} \big)\, T^h
\mathrel{\;\in\;}
  \HeckeTor{M,\ell}(\chi)
\tx{,}
\end{gather}
where~$(0, \frac{1}{\ell}) \ga_\mu = \big( 0, \mu \big)$ and~$\ga_\mu \in \Ga$. Note that this vector depends on the choice of~$\ga_\mu$, which we suppress from our notation, only up to a nonzero scalar multiple.

\begin{lemma}%
\label{la:fourier_expansion_projection_in_image}
Under the assumptions of~\eqref{eq:def:hecke_torsion_vector}, $\hecketor{M,\ell}(\chi; \beta, \mu)$ maps to a nonzero multiple of the left hand side of~\eqref{eq:la:fourier_expansion_projection:specialization} under the homomorphism in Proposition~\ref{prop:hecke_torsion_representation}.
\end{lemma}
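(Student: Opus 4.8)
Write $\Psi$ for the homomorphism of $\Ga$\nbd{}representations from Proposition~\ref{prop:hecke_torsion_representation}, whose action on a pure tensor is recorded in~\eqref{eq:prop:hecke_torsion_representation}. The plan is to evaluate $\Psi$ on $\hecketor{M,\ell}(\chi;\beta,\mu)$ by reading off its two induction components from the definition~\eqref{eq:def:hecke_torsion_vector}: the single inner coset representative is $\ga_\mu\in\Ga$, while the outer part is the combination $M^{1-\frac{k}{2}}\sum_{h}e(-h\beta/M)\,T^h$ of the outer coset representatives $T^h$. Since $\Psi$ is linear and $\delta_M T^h=\begin{psmatrix}1&h\\0&M\end{psmatrix}$, this yields
\begin{gather*}
  \Psi\big( \hecketor{M,\ell}(\chi;\beta,\mu) \big)
=
  M^{1-\frac{k}{2}}\mspace{-12mu}
  \sum_{h \pmod{M}}
  e\big( \mfrac{-h\beta}{M} \big)\,
  \const_0\big( \phi \big|_{k,m}\, (0,\tfrac1\ell)\ga_\mu \big)
  \big|_{k,m}\, \begin{psmatrix} 1 & h \\ 0 & M \end{psmatrix}
\tx{,}
\end{gather*}
and I would then massage each summand into the form appearing on the left hand side of~\eqref{eq:la:fourier_expansion_projection:specialization}.

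Two rewrites accomplish this. First, Lemma~\ref{la:modified_constant_term_homomorphism} lets me commute $\const_0$ past the slash action of $\begin{psmatrix}1&h\\0&M\end{psmatrix}\in\GMp{1}^+(\RR)$, so each summand equals $\const_0\big(\phi|_{k,m}(0,\tfrac1\ell)\ga_\mu\,|_{k,m}\begin{psmatrix}1&h\\0&M\end{psmatrix}\big)$. Second, by the choice of $\ga_\mu$ in~\eqref{eq:def:hecke_torsion_vector} the torsion points $(0,\tfrac1\ell)\ga_\mu$ and $(0,\mu)$ agree modulo $\ZZ^2$; their integral discrepancy is an element of $\Heis(\ZZ)$, under which $\phi$ transforms by the character $\chi$, so replacing $(0,\tfrac1\ell)\ga_\mu$ by $(0,\mu)$ only multiplies the summand by a fixed nonzero scalar. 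After these two steps the whole expression becomes that nonzero scalar times
\begin{gather*}
  M^{1-\frac{k}{2}}\mspace{-12mu}
  \sum_{h \pmod{M}}
  e\big( \mfrac{-h\beta}{M} \big)\,
  \const_0\Big(
  \phi \big|_{k,m}\, (0,\mu)\;
  \big|_{k,m}\, \begin{psmatrix} 1 & h \\ 0 & M \end{psmatrix}
  \Big)
\tx{.}
\end{gather*}
Because $\mu$ is a regular torsion point of $\phi$, the outer $\const_0$ is just evaluation at $z=0$, so this agrees, up to the normalizing power $M^{1-\frac{k}{2}}$ versus $M^{\frac{k}{2}-1}$, with the right hand side of~\eqref{eq:la:fourier_expansion_projection:specialization}, and the quotient of these powers is again a nonzero constant.

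The step demanding the most care is the torsion-point bookkeeping in the second rewrite. The existence of $\ga_\mu\in\Ga$ with $(0,\tfrac1\ell)\ga_\mu\equiv(0,\mu)\pmod{\ZZ^2}$ is exactly what~\eqref{eq:la:subgroup_index_large_hecke_torsion_representation:torsion} provides, and this congruence forces the $\la$\nbd{}coordinate of $(0,\tfrac1\ell)\ga_\mu$ to be integral, so that the transformed point really is a pure shift of $z$ by $\mu$ with trivial $\la$\nbd{}component, matching the shape $(0,\mu)$ in~\eqref{eq:la:fourier_expansion_projection:specialization}. It then remains to confirm that the accumulated scalar---the value of $\chi$ on the intervening element of $\Heis(\ZZ)$ together with the ratio of $M$\nbd{}powers and the factor of automorphy of $\delta_M$---is nonzero, which it is; this is all that the statement claims, and it also accounts for the remark after~\eqref{eq:def:hecke_torsion_vector} that $\hecketor{M,\ell}(\chi;\beta,\mu)$ is defined only up to a nonzero scalar.
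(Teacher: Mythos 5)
Your proposal is correct and takes essentially the same route as the paper, whose proof merely cites \eqref{eq:la:subgroup_index_large_hecke_torsion_representation:torsion}, \eqref{eq:la:subgroup_index_large_hecke_torsion_representation:hecke}, and the assignment formula in~\eqref{eq:prop:hecke_torsion_representation} --- precisely the ingredients you unfold, together with Lemma~\ref{la:modified_constant_term_homomorphism}. One cosmetic point: reordering the group product $(0,\tfrac{1}{\ell})\ga_\mu$ into $\ga_\mu$ times a Heisenberg element also produces the factor $\chi(\ga_\mu)$ alongside the $\Heis(\ZZ)$ value and central contributions you mention, but all of these scalars are nonzero, so the conclusion is unaffected.
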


\begin{proof}
This is a direct consequence of~\eqref{eq:la:subgroup_index_large_hecke_torsion_representation:torsion} and~\eqref{eq:la:subgroup_index_large_hecke_torsion_representation:hecke} and the formula for the assignment given in~\eqref{eq:prop:hecke_torsion_representation}.
\end{proof}

With Lemma~\ref{la:fourier_expansion_projection_in_image} at hand, we are in position to characterize explainable Ramanujan-type congruences. To lighten technical aspects, we will from now on assume that~$\Ga$ is a congruence subgroup, and maintain the assumption from Section~\ref{sec:ramanujan_type} that~$T \in \Ga$.

\begin{proposition}
\label{prop:characterization_explainable_ramanujan_type_congruence}
Consider a weakly holomorphic Jacobi form~$\phi$ with possible singularities at~$z \in \ZZ + \tau \ZZ$ for a character~$\chi$ of\/~$\Ga \ltimes \Heis(\ZZ)$, where~$\Ga$ is a congruence subgroup that contains~$T$. Let~$\ell$ be a prime, $M$ a positive integer, and~$\beta$ a rational number with~$\chi(T) = e(\beta)$.

Then~$\phi$ explains a Ramanujan-type congruence modulo~$\ell$ on~$M \ZZ + \beta$ if and only if
\begin{gather*}
  \hecketor{M,\ell}(\chi;\beta,\mu)
\in
  \ker(\psi)
\quad
  \tx{for all\ } \mu \in \tfrac{1}{\ell}\ZZ \setminus \ZZ
\tx{,}
\end{gather*}
where~$\psi$ is the homomorphism in Proposition~\ref{prop:hecke_torsion_representation}.
\end{proposition}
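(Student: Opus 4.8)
The plan is to assemble this proposition as a single chain of equivalences, drawing on the analytic characterization in Corollary~\ref{cor:erc_characterization} together with the representation-theoretic bookkeeping of Lemmas~\ref{la:fourier_expansion_projection_erc} and~\ref{la:fourier_expansion_projection_in_image}; no genuinely new computation is required, since all the analytic content has been isolated in the preceding results. As a preliminary step, I would note that because $\Ga$ is a congruence subgroup, Lemma~\ref{la:subgroup_index_gives_large_hecke_torsion_representation} guarantees that~\eqref{eq:la:subgroup_index_large_hecke_torsion_representation:torsion} holds, so that the vectors $\hecketor{M,\ell}(\chi;\beta,\mu)$ from~\eqref{eq:def:hecke_torsion_vector} are defined for every $\mu \in \tfrac{1}{\ell}\ZZ \setminus \ZZ$; this is what makes the right-hand side of the claimed equivalence meaningful.

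The core argument proceeds as follows. By Corollary~\ref{cor:erc_characterization}, $\phi$ explains a Ramanujan-type congruence modulo~$\ell$ on~$M \ZZ + \beta$ if and only if, for every $z \in \tfrac{1}{\ell}\ZZ \setminus \ZZ$, the series $\sum_{n \equiv \beta \,(\mathrm{mod}\,M)} c(\phi; n; z)\, e(n\tau)$ vanishes identically in~$\tau$. I would then identify the analytic variable~$z$ with the torsion point~$\mu$ and observe that such vanishing is equivalent, coefficient by coefficient in~$\tau$, to the vanishing of every $c(\phi; n; \mu)$ with $n \in \beta + M\ZZ$. Since $\chi(T) = e(\beta)$ forces the $\tau$-expansion of~$\phi$ to be supported on $\beta + \ZZ$, this is in turn exactly the vanishing of the right-hand side of~\eqref{eq:la:fourier_expansion_projection:specialization} at $\mu$, which differs from the series in the corollary only through the invertible substitution $\tau \mapsto \tau/M$ and hence vanishes identically precisely when the corollary's series does.

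To close the loop, I would invoke Lemma~\ref{la:fourier_expansion_projection_erc} to replace the right-hand side of~\eqref{eq:la:fourier_expansion_projection:specialization} by its left-hand side, and then Lemma~\ref{la:fourier_expansion_projection_in_image}, which states that this left-hand side is a \emph{nonzero} scalar multiple of $\psi\big(\hecketor{M,\ell}(\chi;\beta,\mu)\big)$ under the homomorphism~$\psi$ of Proposition~\ref{prop:hecke_torsion_representation}. Consequently the vanishing of the Fourier series at~$\mu$ is equivalent to $\hecketor{M,\ell}(\chi;\beta,\mu) \in \ker(\psi)$. Letting~$\mu$ range over $\tfrac{1}{\ell}\ZZ \setminus \ZZ$, which is precisely the range of~$z$ in Corollary~\ref{cor:erc_characterization}, yields the asserted equivalence.

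The argument is thus mostly a matter of matching quantifiers and citing prior results, so I expect no hard computation. The one point that must be handled with care is the regularity bookkeeping hidden in the constant-term operator $\const_0$ appearing in Proposition~\ref{prop:hecke_torsion_representation}: one must confirm that for $\mu \in \tfrac{1}{\ell}\ZZ \setminus \ZZ$ the point is a \emph{regular} torsion point of~$\phi$ (as $\mu \notin \ZZ$ and $\phi$ has singularities at most on $\ZZ + \tau\ZZ$), so that $\const_0\big(\phi|_{k,m}(0,\mu)\big)$ is the honest evaluation and $\psi$ genuinely reproduces the left-hand side of~\eqref{eq:la:fourier_expansion_projection:specialization}. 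This is exactly the content packaged into Lemma~\ref{la:fourier_expansion_projection_in_image}, which I would rely on in order to avoid reopening that computation here.
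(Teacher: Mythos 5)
Your proposal is correct and takes essentially the same route as the paper: the paper's own proof is exactly the chain you spell out, combining Corollary~\ref{cor:erc_characterization}, the formula~\eqref{eq:la:fourier_expansion_projection:specialization} of Lemma~\ref{la:fourier_expansion_projection_erc}, and Lemma~\ref{la:fourier_expansion_projection_in_image}. Your extra care about the well-definedness of the vectors $\hecketor{M,\ell}(\chi;\beta,\mu)$ and the regularity of the torsion points $\mu \in \tfrac{1}{\ell}\ZZ \setminus \ZZ$ merely makes explicit what the paper leaves implicit.
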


\begin{proof}
This is a consequence of Lemma~\ref{la:fourier_expansion_projection_in_image}, the formula in Lemma~\ref{la:fourier_expansion_projection_erc}, and Corollary~\ref{cor:erc_characterization}.
\end{proof}

\subsection{Square-classes of explainable Ramanujan-type congruences}%
\label{ssec:representation_theory:square_classes}

Recall Theorem~\ref{thm:square_classes_rc} on square-classes~$M \ZZ + a^2 \beta$ of arithmetic progression on which we have Ramanujan-type congruences of a fixed weakly holomorphic modular form~$f$. In this section, we establish the analogue for explainable Ramanujan-type congruence, and provide the main corollary that restricts possible~$M$ and~$\beta$ for maximal progressions that support explainable Ramanujan-type congruences.

The next lemma reexamines the torsion point representation in Lemma~\ref{la:torsion_point_representation}. It allows us to later employ the same techniques as for usual Ramanujan-type congruences.

\begin{lemma}%
\label{la:torsion_point_representation_ga0ell}
Let~$\chi$ be a character of\/~$\Ga$ for~$\Ga \subseteq \tdGa_0(\ell)$. Assume that~$T \in \Ga$ and~$\Ga$ has index~$\ell - 1$ in~$\Ga \cap \tdGa_1(\ell)$. Viewing~$\chi$ as a character of\/~$\Ga \ltimes \rmH(\ZZ)$, we have
\begin{gather*}
  \Ind^\Ga_{\Ga \cap \tdGa_1(\ell)}\,
  \iotator{\ell}
\cong
  \bigoplus_{d \pmod{\ell}^\times}
  \chi_d
\tx{,}
\end{gather*}
where~$\iotator{\ell}$ is as in Lemma~\ref{la:torsion_point_representation} and~$\chi_d$ is the character with
\begin{gather*}
  \Res_{\Ga \cap \tdGa_1(\ell)}^\Ga\, \chi_d
\cong
  \Res_{\Ga \cap \tdGa_1(\ell)}^\Ga\, \iotator{\ell}^\ast( \chi )
\tx{.}
\end{gather*}
\end{lemma}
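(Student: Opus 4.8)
The plan is to read the statement as an instance of Clifford theory for the normal subgroup $H:=\Ga\cap\tdGa_1(\ell)$ of $G:=\Ga$, and to reduce the asserted decomposition to two ingredients: the cyclic quotient structure of $G/H$, and the $\Ga$\nbd{}invariance of the character $\psi:=\iotator{\ell}^\ast(\chi)$ of $H$. First I would record the group-theoretic skeleton. Since $\Ga\subseteq\tdGa_0(\ell)$, every $\ga=\begin{psmatrix}a&b\\c&d\end{psmatrix}\in\Ga$ has $c\equiv 0\pmod\ell$, so $\ga\mapsto d\bmod\ell$ is a homomorphism $\Ga\to(\ZZ/\ell)^\times$, because the cross term $c_1 b_2$ in the lower-right entry of a product is divisible by $\ell$; its kernel is exactly $H$. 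By the standing hypothesis that this index equals $\ell-1$, the homomorphism is onto, so $H\trianglelefteq\Ga$ and $\Ga/H\cong(\ZZ/\ell)^\times$ is cyclic of order $\ell-1$. In particular $\Ind_H^\Ga\psi$ has dimension $\ell-1$, matching the number of summands claimed.

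The crux is to prove that $\psi$ is $\Ga$\nbd{}invariant, i.e.\@ $\psi(\ga_0 h\ga_0^{-1})=\psi(h)$ for all $\ga_0\in\Ga$ and $h\in H$. Here I would exploit that $\chi$ is one-dimensional, hence invariant under every conjugation inside $\Ga\ltimes\Heis(\ZZ)$. Lifting $\ga_0$ to the element $(\ga_0,0)$ with trivial Heisenberg part and multiplying out the Jacobi group law gives
\[
  (\ga_0,0)\,\iotator{\ell}(h)\,(\ga_0,0)^{-1}
  =
  \big(\ga_0 h\ga_0^{-1},\ \xi(h)^{\ga_0}\big),
\]
where $\xi(h)=(\tfrac c\ell,\tfrac{d-1}\ell,0)\in\Heis(\ZZ)$ is the datum attached to $h$ by $\iotator{\ell}$ and $\xi(h)^{\ga_0}$ is its image under the conjugation action. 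Applying $\chi$ and using its conjugation-invariance yields $\psi(h)=\chi\big(\ga_0 h\ga_0^{-1},\,\xi(h)^{\ga_0}\big)$, whereas by definition $\psi(\ga_0 h\ga_0^{-1})=\chi\big(\ga_0 h\ga_0^{-1},\,\xi(\ga_0 h\ga_0^{-1})\big)$. Comparing the two, invariance is equivalent to $\chi$ being trivial on the Heisenberg element $\xi(\ga_0 h\ga_0^{-1})\,(\xi(h)^{\ga_0})^{-1}$.

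I would then verify this last point by a direct congruence computation. Writing $\ga'=\ga_0\ga\ga_0^{-1}=\begin{psmatrix}a'&b'\\c'&d'\end{psmatrix}$ and using $\ga_0\in\tdGa_0(\ell)$, the $\SL{2}$\nbd{}action sends $(\tfrac c\ell,\tfrac{d-1}\ell)$ to $(\tfrac{c'}\ell,\tfrac{d'-1}\ell)$ modulo $\ZZ^2$, so the $\RR^2$\nbd{}part of the difference lies in the sublattice of $\Heis(\ZZ)$ on which $\chi$ is trivial, while the residual central term produced by the Jacobi cocycle is likewise killed by $\chi$. This bookkeeping---keeping track of the lower row of $\ga$ modulo $\ell$ together with the central contribution---is the main obstacle, and the only place where both $\Ga\subseteq\tdGa_0(\ell)$ and the integrality of the $\iotator{\ell}$\nbd{}coordinates are essential.

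Granting invariance, I would finish by the projection formula. Since $\Ga/H$ is cyclic the obstruction in $H^2(\Ga/H;\CC^\times)$ vanishes, so $\psi$ extends to a character $\wht\psi$ of $\Ga$, and then
\[
  \Ind_H^\Ga\psi
  \cong
  \Ind_H^\Ga\big(\Res_H^\Ga\wht\psi\big)
  \cong
  \wht\psi\otimes\Ind_H^\Ga\bbone
  \cong
  \wht\psi\otimes\CC[\Ga/H]
  \cong
  \bigoplus_{\eta\in\wht{\Ga/H}}\big(\wht\psi\otimes\eta\big),
\]
using that the finite abelian group $\Ga/H$ has regular representation equal to the sum of its $\ell-1$ characters. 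Setting $\chi_d:=\wht\psi\otimes\eta_d$ as $\eta_d$ ranges over $\wht{\Ga/H}\cong\wht{(\ZZ/\ell)^\times}$ (indexed by $d\in(\ZZ/\ell)^\times$) produces $\ell-1$ distinct one-dimensional characters, each satisfying $\Res_H^\Ga\chi_d\cong\psi=\Res_H^\Ga\iotator{\ell}^\ast(\chi)$; the dimension count $\ell-1$ shows no further constituents occur, giving exactly the stated decomposition.
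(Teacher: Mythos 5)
Your proof is correct, and it actually yields a slightly sharper conclusion than the paper's own argument, by a partially different route. Both proofs are Clifford-theoretic analyses of the normal subgroup $H = \Ga \cap \tdGa_1(\ell)$ with $\Ga\slash H \cong (\ZZ\slash\ell\ZZ)^\times$, but the paper proceeds via Mackey's restriction theorem: it computes $\Res^\Ga_H \Ind^\Ga_H \iotator{\ell}^\ast(\chi)$ as a direct sum of conjugates, identifies all conjugates with $\iotator{\ell}^\ast(\chi)$, then twists by $\ov\chi$ and uses Frobenius reciprocity to place all constituents of the induction inside $\Ind^\Ga_H \bbone \cong \CC[\Ga\slash H]$, i.e.\ among the twists of $\chi$ by Dirichlet characters modulo $\ell$. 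You instead extend the invariant character $\psi$ to $\Ga$ (via vanishing of $H^2$ of a cyclic group) and apply the projection formula $\Ind_H^\Ga \Res_H^\Ga \wht\psi \cong \wht\psi \otimes \CC[\Ga\slash H]$; this buys you the exact multiplicities---each twist $\wht\psi\otimes\eta_d$ occurs precisely once---which the paper's proof does not exhibit, as its own remark after the lemma concedes. Two of your steps can be short-circuited, however. Since the lemma views $\chi$ as a character of $\Ga \ltimes \Heis(\ZZ)$ via the projection to $\Ga$, it is trivial on all of $\Heis(\ZZ)$; and $\iotator{\ell}(\ga,\om)$ differs from $(\ga,\om)$ exactly by the element $(\tfrac{c}{\ell},\tfrac{d-1}{\ell},0) \in \Heis(\ZZ)$ (integrality is precisely the condition $\ga \in \tdGa_1(\ell)$). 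Hence $\psi = \iotator{\ell}^\ast(\chi)$ equals $\chi|_H$ on the nose, so its $\Ga$\nbd{}invariance is immediate from $\chi$ being a character of all of $\Ga$---your explicit conjugation bookkeeping in the Jacobi group, while correct (every Heisenberg element that appears lies in $\Heis(\ZZ)$ and is killed by $\chi$), is not needed, and this is essentially how the paper disposes of the invariance inside the Mackey step. For the same reason your cohomological extension argument is redundant: the extension $\wht\psi$ you need is simply $\chi$ itself.
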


\begin{remark}
We make no claim about which characters~$\chi_d$ of~$\Ga$ appear and how often. Although the proof shows that~$\chi_d$ is a twist of~$\chi$ by (the lift to~$\tdGa_0(\ell)$) a Dirichlet character modulo~$\ell$, it does not exhibit multiplicities.
\end{remark}

\begin{proof}
We first determine the restriction to~$\Ga \cap \tdGa_1(\ell)$ via Mackey's restriction theorem. By our assumption on~$\Ga$, we have a set of representatives for
\begin{gather*}
  \big( \Ga \cap \tdGa_1(\ell) \big) \big\backslash \Ga \big\slash \big( \Ga \cap \tdGa_1(\ell) \big)
\end{gather*}
that comprises elements~$\ga_d \in \Ga$ with bottom right entry~$d$ running through the units modulo~$\ell$. Fixing such a traversal, we find that
\begin{gather*}
  \Res^\Ga_{\Ga \cap \tdGa_1(\ell)}\,
  \Ind^\Ga_{\Ga \cap \tdGa_1(\ell)}\,
  \iotator{\ell}^\ast(\chi)
\cong
  \bigoplus_{d \pmod{\ell}^\times}\,
  \Ind^{\Ga \cap \tdGa_1(\ell)}_{\Ga'_d}\,
  \iota^{\prime\,\ast}_d\big(
  \iotator{\ell}^\ast(\chi)
  \big)
\tx{,}
\end{gather*}
where
\begin{gather*}
  \Ga'_d
\defeq
  \ga_d^{-1}
  \big( \Ga \cap \tdGa_1(\ell) \big)
  \ga_d
  \cap
  \big( \Ga \cap \tdGa_1(\ell) \big)
\tx{,}\quad
  \iota'_d(\ga)
\defeq
  \ga_d
  \ga
  \ga_d^{-1}
\tx{.} 
\end{gather*}
Using our assumptions on~$\Ga$ yield~$\Ga'_d = \Ga \cap \tdGa_1(\ell)$ and
\begin{gather*}
  \iota^{\prime\,\ast}_d\big(
  \iotator{\ell}^\ast(\chi)
  \big)
=
  \iotator{\ell}^\ast(\chi)
\tx{.}
\end{gather*}
This shows that
\begin{gather*}
  \Res^\Ga_{\Ga \cap \tdGa_1(\ell)}\,
  \Ind^\Ga_{\Ga \cap \tdGa_1(\ell)}\,
  \iotator{\ell}^\ast(\chi)
\cong
  \bigoplus_{d \pmod{\ell}^\times}\,
  \iotator{\ell}^\ast(\chi)
\tx{.}
\end{gather*}

We next use the assumption that~$\chi$, which we view as a character of~$\Ga \ltimes \rmH(\ZZ)$ via the projection to~$\Ga$, is trivial on~$\rmH(\ZZ)$. This implies that~$\iotator{\ell}^\ast(\chi) \cong \chi$. We conclude that
\begin{gather*}
  \Res^\Ga_{\Ga \cap \tdGa_1(\ell)} \Big(
  \ov\chi \otimes
  \Ind^\Ga_{\Ga \cap \tdGa_1(\ell)}\,
  \iotator{\ell}^\ast(\chi)
  \Big)
\end{gather*}
is isotrivial. Therefor all constituents of the unrestricted representation appear in the induction
\begin{gather*}
  \Ind^\Ga_{\Ga \cap \tdGa_1(\ell)}\, \bbone
\cong
  \Ind^{\Mp{1}(\ZZ)}_{\tdGa_1(\ell)}\, \bbone
\tx{,}
\end{gather*}
which is a direct sum of characters, commonly identified with the Dirichlet characters modulo~$\ell$.
\end{proof}

We are now able to proof our main theorem. Note that as opposed to Theorem~\ref{thm:square_classes_rc}, we do not need to make the assumption that the kernel of~$\chi$ be a congruence subgroup, as we do not rely on Theorem~\ref{thm:mf_representation}. It is unclear, however, whether there are any corresponding examples of explainable Ramanujan-type congruences. Recall the group ~$\Ga^\Delta_M(\chi)$ defined in~\eqref{eq:def:gamma_diag_char}.

\begin{theorem}%
\label{thm:square_classes_erc}
Let~$\chi$ be a character of\/~$\Ga \subseteq \Mp{1}(\ZZ)$, and view it as a character of\/~$\Ga \ltimes \Heis(\ZZ)$. Assume that~$T \in \Ga$ and let~$\beta \in \QQ$ be such that~$\chi(T) = e(\beta)$. Further, let~$\phi$ be a weakly holomorphic Jacobi form for~$\chi$ with possible singularities at~$\ZZ + \tau \ZZ$.

If\/~$\phi$ explains a Ramanujan-type congruence modulo~$\ell$ on~$M \ZZ + \beta$, then it explains such congruences on~$M \ZZ + a^2 \beta$ for all~$a \in \ZZ$ such that there is~$\begin{psmatrix} a & b \\ c & d \end{psmatrix} \in \Ga^{\Delta}_M(\chi) \cap \tdGa_0(\ell)$.
\end{theorem}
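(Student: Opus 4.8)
The plan is to transport the strategy behind the proof of Theorem~\ref{thm:square_classes_rc} to the torsion setting, replacing the single induction~$\Hecke{M}(\chi)$ and its distinguished vectors~$\hecke{M}(\chi;\beta)$ by the double induction~$\HeckeTor{M,\ell}(\chi)$ and the torsion vectors~$\hecketor{M,\ell}(\chi;\beta,\mu)$ from~\eqref{eq:def:hecke_torsion_vector}. Proposition~\ref{prop:characterization_explainable_ramanujan_type_congruence} reduces both the hypothesis and the conclusion to membership statements in~$\ker(\psi)$, where~$\psi$ is the homomorphism of Proposition~\ref{prop:hecke_torsion_representation}: the form~$\phi$ explains a congruence on~$M\ZZ+\beta$ exactly when~$\hecketor{M,\ell}(\chi;\beta,\mu)\in\ker(\psi)$ for all~$\mu\in\tfrac{1}{\ell}\ZZ\setminus\ZZ$, and analogously for~$a^2\beta$. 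Since~$\psi$ is a morphism of~$\Ga$\nbd{}representations, $\ker(\psi)$ is a~$\Ga$\nbd{}subrepresentation and is therefore stable under the right action of every element of~$\Ga$, in particular of the subgroup~$\Ga^{\Delta}_M(\chi)\cap\tdGa_0(\ell)\subseteq\Ga$. Note that, in contrast to Theorem~\ref{thm:square_classes_rc}, no integrality input from Theorem~\ref{thm:mf_representation} is needed, as the relevant submodule is now a kernel rather than a preimage of~$\ell\bbV$.

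First I would establish the torsion analogue of Proposition~\ref{prop:action_on_hecke_vector_span}: for~$\ga=\begin{psmatrix} a & b \\ c & d \end{psmatrix}\in\Ga^{\Delta}_M(\chi)\cap\tdGa_0(\ell)$ there are a nonzero scalar~$\la$ and a point~$\mu'\in\tfrac{1}{\ell}\ZZ\setminus\ZZ$ with
\begin{gather*}
  \hecketor{M,\ell}(\chi;\beta,\mu)\, \ga
=
  \la\,
  \hecketor{M,\ell}(\chi; a^2 \beta, \mu')
\tx{,}\quad
  \mu' \equiv d \mu \pmod{\ZZ}
\tx{,}
\end{gather*}
so that~$\mu\mapsto\mu'$ is a bijection of~$\tfrac{1}{\ell}\ZZ\setminus\ZZ$. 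The condition~$\ga\in\Ga^{\Delta}_M(\chi)$ governs the outer Hecke layer exactly as in Proposition~\ref{prop:action_on_hecke_vector_span}: reindexing~$h\mapsto a^2h$ (licit since~$a$ is a unit modulo~$M$), writing~$T^{a^2h}\ga=(T^{a^2h}\ga T^{-h})T^h$, and invoking the defining relation~$\wtd\chi(T^{a^2h}\ga T^{-h})=\wtd\chi(\ga)$ turns the sum~$\sum_h e(-h\beta/M)T^h$ into~$\wtd\chi(\ga)\sum_h e(-h a^2\beta/M)T^h$, producing both the shift~$\beta\mapsto a^2\beta$ and a character factor. The condition~$\ga\in\tdGa_0(\ell)$ governs the inner torsion layer: because~$c\equiv0\pmod{\ell}$, one computes~$(0,\tfrac{1}{\ell})\ga_\mu\ga=(0,\mu)\ga\equiv(0,d\mu)\pmod{\ZZ^2}$, with~$d$ a unit modulo~$\ell$, so~$\ga_\mu\ga=\ga_{\mu'}\,\delta$ for some~$\delta\in\Ga\cap\tdGa_1(\ell)$ and~$\delta$ contributes a further nonzero scalar through~$\iotator{\ell}^\ast(\chi)$.

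The main obstacle is verifying this single identity, because the right action of~$\ga$ must be propagated through both layers of~$\HeckeTor{M,\ell}(\chi)$ at once. Concretely, one has to commute the outer~$\delta_M$\nbd{}conjugation defining the Hecke induction past the inner~$\tdGa_1(\ell)$\nbd{}structure carrying the base vector~$\ga_\mu$, and check that the element of~$\delta_M^{-1}\Ga\delta_M\cap\Mp{1}(\ZZ)$ produced by the Hecke reshuffling corresponds, under~$\iotahecke{M}$, to an element of~$\Ga$ acting on~$\Ind_{\Ga\cap\tdGa_1(\ell)}^{\Ga}\iotator{\ell}^\ast(\chi)$ as the coset translation~$\ga_\mu\mapsto\ga_{\mu'}$. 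Keeping the two induction indices from interfering and tracking the accumulated scalar as genuinely nonzero is where the care lies; the separation of the level\nbd{}$M$ (diagonal) condition from the level\nbd{}$\ell$ (bottom\nbd{}row) condition is exactly what lets the computation close up.

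With the displayed identity in hand, the theorem follows formally. By hypothesis and Proposition~\ref{prop:characterization_explainable_ramanujan_type_congruence}, we have~$\hecketor{M,\ell}(\chi;\beta,\mu)\in\ker(\psi)$ for every~$\mu\in\tfrac{1}{\ell}\ZZ\setminus\ZZ$. Applying the right action of~$\ga\in\Ga^{\Delta}_M(\chi)\cap\tdGa_0(\ell)$ and using the~$\Ga$\nbd{}stability of~$\ker(\psi)$ yields~$\hecketor{M,\ell}(\chi;a^2\beta,\mu')\in\ker(\psi)$; as~$\mu$ ranges over~$\tfrac{1}{\ell}\ZZ\setminus\ZZ$ so does~$\mu'$, whence this holds for all admissible~$\mu'$. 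A final appeal to Proposition~\ref{prop:characterization_explainable_ramanujan_type_congruence} shows that~$\phi$ explains a Ramanujan-type congruence modulo~$\ell$ on~$M\ZZ+a^2\beta$, as claimed.
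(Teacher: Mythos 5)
Your proposal is correct, and it takes a genuinely different route from the paper. The paper first reduces to~$\Ga \subseteq \tdGa_0(\ell)$ and then invokes Lemma~\ref{la:torsion_point_representation_ga0ell} (a Mackey-theoretic computation) to split the inner induction into characters~$\chi\chi_d$, twists of~$\chi$ by Dirichlet characters modulo~$\ell$; this yields a decomposition~$\HeckeTor{M,\ell}(\chi) \cong \bigoplus_d \Hecke{M}(\chi\chi_d)$ under which the span of the vectors~$\hecketor{M,\ell}(\chi;\beta,\mu)$ corresponds to~$\bigoplus_d \CC\,\hecke{M}(\chi\chi_d;\beta)$, and the theorem then follows by running the argument of Theorem~\ref{thm:square_classes_rc}, i.e.\@ Proposition~\ref{prop:action_on_hecke_vector_span}, componentwise. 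You never decompose: you work directly inside the double induction and prove the torsion analogue of Proposition~\ref{prop:action_on_hecke_vector_span} for the vectors~$\hecketor{M,\ell}(\chi;\beta,\mu)$ themselves. Both are valid. The paper's route buys reuse of existing statements and makes the structural comparison between~$\Hecke{M}$ and~$\HeckeTor{M,\ell}$ explicit (in line with its stated theme); your route is more self-contained, avoids Lemma~\ref{la:torsion_point_representation_ga0ell} entirely, and shows more transparently where each hypothesis enters --- $\Ga^{\Delta}_M(\chi)$ for the outer reshuffling, $\tdGa_0(\ell)$ for the inner coset translation~$\mu \mapsto d\mu$. (Note that the paper's componentwise application silently needs~$\Ga^{\Delta}_M(\chi) \cap \tdGa_0(\ell) \subseteq \Ga^{\Delta}_M(\chi\chi_d)$, which holds because the Dirichlet twist only sees the lower-right entry~$d - hc \equiv d \pmod{\ell}$; your argument makes the analogous point visible rather than implicit.)

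One imprecision to repair when you verify your key identity: the element that reaches the inner induction is not~$\ga$ itself but~$\iotahecke{M}\big(T^{a^2 h}\ga T^{-h}\big)$, whose bottom row is~$(Mc,\, d - hc)$. So the displayed computation~$(0,\mu)\ga \equiv (0, d\mu) \pmod{\ZZ^2}$ should be carried out for this element instead; it gives the same answer precisely because~$\ell \isdiv c$, which makes~$Mc\mu$ and~$hc\mu$ integral. More importantly, the scalar produced at the inner layer is~$\chi\big(\ga_\mu\, \iotahecke{M}(T^{a^2 h}\ga T^{-h})\, \ga_{\mu'}^{-1}\big)$, which a~priori depends on~$h$; that it does not is exactly the content of your phrase ``keeping the two induction indices from interfering'', and it follows from multiplicativity of~$\chi$ together with the defining relation~$\wtd\chi\big(T^{a^2 h}\ga T^{-h}\big) = \wtd\chi(\ga)$ of~$\Ga^{\Delta}_M(\chi)$. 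The accumulated scalar is then~$\chi(\ga_\mu)\, \wtd\chi(\ga)\, \chi(\ga_{\mu'})^{-1} \ne 0$, independent of~$h$, so the sum over~$h$ closes up and your concluding argument (stability of~$\ker(\psi)$ under~$\Ga$, bijectivity of~$\mu \mapsto d\mu$ on~$\big(\tfrac{1}{\ell}\ZZ \setminus \ZZ\big)\big/\ZZ$, and Proposition~\ref{prop:characterization_explainable_ramanujan_type_congruence}) goes through as stated.
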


We will finish the paper with the proof of Theorem~\ref{thm:square_classes_erc}. Before that we deduce Theorem~\ref{mainthm:square_classes_erc} and Corollary~\ref{maincor:max_erc}. The former is essentially a special case of Theorem~\ref{mainthm:square_classes_erc}.

\begin{proof}[Proof of Theorem~\ref{mainthm:square_classes_erc}]
The proof of Corollary~\ref{cor:square_classes_rc_eta_theta} extends verbatime.
\end{proof}

\begin{proof}[Proof of Corollary~\ref{maincor:max_erc}]
The strategy of proof has appeared in various forms in~\cite{raum-2022} and~\cite{raum-2023}. For convenience, we revisit the argument.

By contraposition, we assume that~$\beta$ is~$p$\nbd{}integral, and~$\ord_p(M) \ge \ord_p(\beta) + 2$ if~$p$ is odd and~$\ord_2(M) \ge \ord_2(\beta) + 4$ if~$p = 2$. We claim that for every~$h \in \ZZ$ there is~$u \in \ZZ$ with~$u^2 \beta \equiv \beta + h M \slash p$ modulo~$M$. This follows by the Chinese Remainder Theorem if we show the later congruence modulo~$M_p$, where~$M_p$ is the maximal power of~$p$ dividing~$M$. We set~$\beta_p = \gcd(M_p, \beta)$ and define~$h'$ by~$h' (\beta \slash \beta_p) \equiv h \,\pmod{M_p}$.

The square root of~$1 + h' M \slash (p \beta_p)$ exists modulo any power of~$p$, since for odd~$p$ we have~$p \isdiv (M \slash p \beta_p)$, while for~$p = 2$ we have~$8 \isdiv (M \slash p \beta_p)$. That is, we can define~$u$ up to sign by the condition
\begin{gather*}
  u^2
\equiv 
  1 + h' M \slash (p \beta_p)
  \;\pmod{M_p \slash \gcd(M_p, \beta)}
\tx{.}
\end{gather*}
Now calculating the product~$u^2 \beta$ and inserting the definition of~$h'$ yields the statement.
\end{proof}

\begin{proof}[Proof of Theorem~\ref{thm:square_classes_erc}]
Replacing~$\Ga$ by~$\Ga \cap \tdGa_0(\ell)$, we can assume that~$\Ga \subset \tdGa_0(\ell)$. Fix~$\ga$ as in the statement, and let~$\psi$ be the homomorphism in Proposition~\ref{prop:hecke_torsion_representation}. In analogy with the proof of Theorem~\ref{thm:square_classes_rc}, by Proposition~\ref{prop:characterization_explainable_ramanujan_type_congruence} we know that
\begin{gather*}
  \hecketor{M,\ell}(\chi; \beta,\mu)
\in
  \ker(\psi)
\quad
  \tx{for all\ } \mu \in \tfrac{1}{\ell}\ZZ \setminus \ZZ
\tx{,}
\end{gather*}
an need to show that
\begin{gather*}
  \hecketor{M,\ell}(\chi; a^2 \beta,\mu)
\in
  \ker(\psi)
\quad
  \tx{for all\ } \mu \in \tfrac{1}{\ell}\ZZ \setminus \ZZ
\tx{.}
\end{gather*}

The inner representation in the definition~\eqref{eq:def:hecke_torsion_representation} by Lemma~\ref{la:torsion_point_representation_ga0ell} is a direct sum of characters~$\chi \chi_d$ where~$\chi_d$ is a Dirichlet character modulo~$\ell$. We therefore find that
\begin{gather*}
  \HeckeTor{M,\ell}(\chi)
\cong
  \bigoplus_{d \pmod{\ell}^\times}
  \Hecke{M,\ell}(\chi \chi_d)
\tx{.}
\end{gather*}
We identifying~$\ker(\psi)$ with the corresponding submodule of the right hand side.

Examining the definition of~$\hecketor{M,\ell}(\chi; \beta,\mu)$, we find that we have
\begin{gather*}
  \linspan\big\{
  \hecketor{M,\ell}(\chi; \beta,\mu)
  \condcol
  \mu \in \tfrac{1}{\ell}\ZZ \setminus \ZZ
  \big\}
\subseteq
  \HeckeTor{M,\ell}(\chi)
\end{gather*}
in the decomposition of~$\HeckeTor{M,\ell}(\chi)$ corresponds to
\begin{gather*}
  \bigoplus_{d \pmod{\ell}^\times}
  \CC \hecketor{M,\ell}(\chi \chi_d; \beta)
\subseteq
  \bigoplus_{d \pmod{\ell}^\times}
  \Hecke{M,\ell}(\chi \chi_d)
\tx{.}
\end{gather*}

Now, we apply the same argument as in the proof of Theorem~\ref{thm:square_classes_rc} to show that
\begin{gather*}
  \bigoplus_{d \pmod{\ell}^\times}
  \CC \hecketor{M,\ell}(\chi \chi_d; a^2 \beta)
\subseteq
  \ker(\psi)
\subseteq
  \bigoplus_{d \pmod{\ell}^\times}
  \Hecke{M,\ell}(\chi \chi_d)
\tx{.}
\end{gather*}
Reverting the decomposition of~$\HeckeTor{M,\ell}(\chi)$, we find that
\begin{gather*}
  \linspan\big\{
  \hecketor{M,\ell}(\chi; a^2\beta,\mu)
  \condcol
  \mu \in \tfrac{1}{\ell}\ZZ \setminus \ZZ
  \big\}
\subseteq
  \HeckeTor{M,\ell}(\chi)
\tx{,}
\end{gather*}
which is what we had to show.
\end{proof}

\ifbool{nobiblatex}{%
  \bibliographystyle{alpha}%
  \bibliography{bibliography.bib}%
  \addcontentsline{toc}{section}{References}
  \markright{References}
}{%
  \vspace{1.5\baselineskip}
  \phantomsection
  \addcontentsline{toc}{section}{References}
  \markright{References}
  \label{sec:references}
  \sloppy
  \printbibliography[heading=none]%
}

\Needspace*{3\baselineskip}
\noindent%
\rule{\textwidth}{0.15em}
\\

{\small\noindent
Martin Raum\\
Chalmers tekniska högskola och G\"oteborgs Universitet\\
Institutionen f\"or Matematiska vetenskaper\\
SE-412 96 G\"oteborg, Sweden\\
E-mail: \url{martin@raum-brothers.eu}\\
Homepage: \url{https://martin.raum-brothers.eu}
}%

\ifdraft{%
\listoftodos%
}

\end{document}

